    \newtheorem{theorem}{Theorem}[section]
\newtheorem{lemma}{Lemma}[section]
    \newtheorem{remark}{Remark}[section]
\DeclareMathOperator{\sech}{sech}
\DeclareMathOperator{\sign}{sgn}
\newcommand\widecheck[1]{%
\savestack{\tmpbox}{\stretchto{%
  \scaleto{%
    \scalerel*[\widthof{\ensuremath{#1}}]{\kern-.6pt\bigwedge\kern-.6pt}%
    {\rule[-\textheight/2]{1ex}{\textheight}}
  }{\textheight}%
}{0.5ex}}%
\stackon[1pt]{#1}{\scalebox{-1}{\tmpbox}}%
}
\newcommand{\RNum}[1]{\uppercase\expandafter{\romannumeral #1\relax}}
\DeclarePairedDelimiter{\norm}{\lVert}{\rVert}
\NewDocumentCommand{\normL}{ s O{} m }{%
  \IfBooleanTF{#1}{\norm*{#3}}{\norm[#2]{#3}}_{L_2(\Omega)}%
}
\title{Local discontinuous Galerkin methods for the abcd nonlinear Boussinesq system}
\author{	Jiawei Sun\footnote{Department of Mathematics, The Ohio State University,
		Columbus, OH 43210, USA. E-mail: sun.2261@buckeyemail.osu.edu.} \and
		Shusen Xie\footnote{School of Mathematical Sciences, Ocean University of China, Qingdao, China. E-Mail: shusenxie@ouc.edu.cn.} 
		\and Yulong Xing\footnote{Department of Mathematics, The Ohio State University,
		Columbus, OH 43210, USA. E-mail: xing.205@osu.edu. }} 
\date{}
\begin{document}
\maketitle

\begin{abstract}
  Boussinesq type equations have been widely studied to model the surface 
  water wave.  In this paper, we consider the abcd Boussinesq system which is a 
  family of Boussinesq type equations including many well-known models such as the classical Boussinesq system, 
  BBM-BBM system,
  Bona-Smith system etc. We propose local discontinuous Galerkin (LDG) 
  methods, with carefully chosen numerical fluxes, to numerically solve this abcd Boussinesq system. The main focus of this paper is 
  to rigorously establish a priori error estimate of the proposed LDG methods for a wide range of the parameters $a,b,c,d$. 
  Numerical experiments are shown to test the convergence rates, and to demonstrate that the proposed
  methods can simulate the head-on collision of traveling wave and finite time blow-up behavior well. 
\end{abstract}
\smallskip
	\textbf{Key words:} local discontinuous Galerkin methods, Boussinesq equations, coupled BBM equations, error estimate, numerical fluxes, head-on collision.

\section{Introduction} \setcounter{equation}{0} \setcounter{table}{0} \setcounter{figure}{0}

In this paper, we present and analyze the discontinuous Galerkin (DG) method for a family of nonlinear dispersive water wave models: the abcd Boussinesq system of the form 
\begin{equation}\label{1}
  \begin{cases}
\eta_t+u_x+(\eta u)_x+au_{xxx}-b\eta_{xxt}=0, &\\
u_t+\eta_x+\big(\frac{u^2}{2}\big)_x+c\eta_{xxx}-du_{xxt}=0,&
\end{cases}
\end{equation}
where $\eta$ denotes the proportional deviation of the free surface from its 
rest position, and $u$ represents the proportional horizontal velocity. 
Here $a,~b,~c,~d$ are constant satisfying $a+b+c+d={1}/{3}$. When $a=b=c=d=0$, this model reduces to
the well-known nonlinear shallow water equations (by replacing $1+\eta$ with water height $h$). 
The goal of the paper is to provide the numerical approximations of \eqref{1} via DG methods, and establish 
rigorous error estimate on their convergence rate for a wide range of the parameters $a,~b,~c,~d$. 
Periodic boundary conditions are considered for simplicity, and similar analysis can be extended to other types of boundary conditions.

Nonlinear dispersive water wave models have wide applications in engineering applications. 
One typical example frequently encountered in practical is the propagation of water wave 
with small amplitude and long wavelength.
For such applications, the Boussinesq approximation for water waves is developed by Boussinesq \cite{Boussinesq1871},
which leads to the Boussinesq type equations. These models are derived from the full Euler equations, and have been frequently used 
in modeling water waves in shallow seas and harbors. 
Many different Boussinesq type equations have been proposed and studied, including the ``good'' Boussinesq equation and the improved
Boussinesq equation. In \cite{Bona1}, Bona et al. introduced a 
generalized case of the Boussinesq equations, which is called the abcd
Boussinesq system \eqref{1}. 
The constant parameters $a,b,c,d$ in \eqref{1} satisfy the following relation:
\begin{align}
	a + b = \frac{1}{2}\left(\theta^2 - \frac{1}{3} \right), \quad c + d &= \frac{1}{2}(1 - \theta^2) \geq 0, 
	  \label{abcdparam}
\end{align}
with the scaled height $\theta\in [0,1]$. $\theta = 0$ represents the bottom of the channel and $\theta = 1$ is the free surface.
Based on different choices of the parameters $a,b,c,d$, this family of equations reduces to some well-known water wave models,
including the classical Boussinesq system ($a=b=c=0$, $d=1/3$), the coupled Benjamin-Bona-Mahony (BBM) system ($a=c=0$, $b=d=1/6$),
the coupled Korteweg-de Vries (KdV) system ($b=d=0$, $a=c=1/6$), and the Bona-Smith system ($a=0$, $b=d$) etc.

Since the introduction of the abcd Boussinesq system, there have been many theoretical and numerical studies of this model. In \cite{Bona2}, Bona, Chen and Saut showed that this system is (locally-in-time) well posed in the following cases:    
$$
a,~c\le 0 ~\text{and}~b,~d\ge0 \hskip20mm \text{or} \hskip20mm a=c\ge0 \text{ and }b,~d\ge0.
$$
The global existence of the solutions can only be shown for some special cases of the parameters $a,b,c,d$. 
For instance in \cite{Bona2}, Bona, Chen and Saut confirmed the global existence for the Bona-Smith system 
with the assumption that the initial value satisfies some smallness condition.
In \cite{Amick}, Amick proved global existence for the classical Boussinesq system. 
In contrast, previous works show that solutions to some models, for example the coupled BBM system, 
could blow up in finite time when $1+\eta<0$. 

There have been many studies on various numerical methods for the special cases of the abcd Boussinesq system \eqref{1}. 
In 1965, Peregrine in \cite{Peregrine} developed a straight-forward finite 
difference approximation to the classical Boussinesq system in 
order to study the propagation of undular bore. In \cite{BonaChen1997}, Bona and 
Chen rewrote the coupled BBM system into a system of integral 
equations, based on which they proposed a numerical scheme with fourth order 
accuracy in both time and space. They also utilized this scheme to simulate the 
collision of solitary waves. For the coupled KdV-KdV equations, Bona, Dougalis and Mitsotakis \cite{BDM} 
implemented the standard Galerkin semidiscretization with periodic smooth spline basis of order $k$, and numerically obtained 
$k+1$-th order of accuracy for spatial convergence rate. 
 Besides these numerical works for the special cases of the system \eqref{1}, there have been several works to tackle 
the general family of abcd Boussinesq system with a large class of parameters $a,b,c,d$.
 In \cite{Antonopoulos}, Antonopoulos, Dougalis and Mitsotakis extended the scheme in \cite{BDM} onto general $a,b,c,d$ 
 parameters and achieved optimal order error estimate for the following 
 categories: 
\begin{enumerate}[label=\textsc{(C\arabic*)},wide]
   \item $a,~c<0,~b,~d>0$;
      \item $b,~d>0$, $a=c>0$;
  \item either $b>0$, $d=0$ or $b=0$, $d>0$, and either $a,~c<0$ or $a=c\ge0$;
   \item $b,~d>0$, $ac=0$ and $a,~c\leq 0$. 

 \end{enumerate}
%
Recently, C. Burtea and C. Court\`{e}s \cite{CB} presented fully discrete finite 
volume scheme for the Boussinesq system (\ref{1}), and 
proved the convergence rate of first order in time and second order in space (when $bd>0$, first order in space when $bd=0$).
The analysis is provided for a broad range of the parameters $a,b,c,d$, which satisfies 
$$
a,~c\leq0~ \text{and} ~b,~d\geq 0,
$$
excluding the following cases: $a=b=0,d\geq 0,c<0$; $a=d=0, b>0,c<0$; $a=b=d=0, c<0$ and $b=d=0, a<0, c<0$.

High order DG methods are considered in this paper to provide efficient numerical approximation of the abcd Boussinesq system (\ref{1}). 
DG method is a class of finite element methods, which uses completely discontinuous basis functions, i.e. piecewise polynomials. 
It inherits the advantages of both finite element and finite volume methods, and has been widely applied in practical application in the last decade. 
We refer to \cite{CHS1990, CLS1989, CS1989}, and \cite{CKS2000} for a historic review. The DG method has various advantages, such as hp-adaptivity flexibility, the local conservativity, efficient parallel implementation, easy coordination with finite volume techniques, the ability of easy handling of complicated geometries and boundary conditions and so on.
For equations with high order spatial derivatives, several types of DG methods have been developed, including the interior penalty DG methods, local discontinuous Galerkin (LDG) methods, ultra-weak DG methods, hybridizable DG methods and many others. 
Among them, the LDG method was developed by Cockburn and Shu in \cite{CC}, and the main idea was to rewrite the original equation into 
a system of first order equations by introducing auxiliary variables, and then discretize it with the standard DG methods. With carefully chosen
numerical fluxes, the stability and error estimate of the LDG methods have been studied for many models with high order spatial derivatives, 
and we refer to the review paper \cite{XS2010} for the development and applications of LDG methods.  


The LDG method has been applied to the scalar generalized KdV equation 
\[u_t+f(u)_x+\epsilon u_{xxx}=0,\]
by Xu and Shu in \cite {YS}, and the sub-optimal error estimate of the $k+1/2$-th order was obtained. 
Energy conserving ultra-weak DG and LDG methods for the KdV equation were studied in \cite{BCKX2013,KX2016},
where energy conserving methods that exactly preserve the discrete energy were shown to provide more accurate numerical approximation
in the long time simulation. 
In \cite{LSXC2020}, energy conserving LDG methods for the improved Boussinesq equation with the optimal error estimate were studied. 
In \cite{JX}, both energy conserving and energy dissipative LDG methods are designed for the coupled BBM system. 
The optimal error estimate of the LDG methods is carried out only for the linearized BBM system. 
It is usually challenging to generalize the error estimate of the scalar nonlinear equation to a nonlinear system. 
In \cite{QZ}, the sub-optimal error estimate is acquired for a family of symmetrizable first order system. 
In particular, the nonlinear shallow water equation, i.e., system (\ref{1}) with $a=b=c=d=0$, is a symmetrizable system. 
The error estimate of the nonlinear coupled BBM system was not available in \cite{JX}, due to the presence of both nonlinear first order
derivative and third order mixed derivative terms. Recently, an optimal error estimate of the energy conserving and energy dissipative
LDG method for the scalar BBM equation was presented in \cite{XL}, by a novel observation to discover the connection between the error of 
the auxiliary and primary variables. 
In this paper, we consider the abcd Boussinesq system (\ref{1}) and first rewrite the model into several first order differential equations. 
The main contribution of this paper is to design high order accurate LDG methods, with carefully chosen numerical fluxes, 
for the abcd Boussinesq system (\ref{1}), 
and to establish rigorous error estimate of the proposed LDG methods for a wide range of 
the parameters $a,b,c,d$, including the coupled nonlinear BBM system that was not analyzed in \cite{JX}.
More specifically, we will provide the error estimate for the cases (C1)-(C4) of the parameters $a,b,c,d$,
similar to those studied in \cite{Antonopoulos}, but different analyses are needed to handle the numerical fluxes and boundaries terms
arising from the DG approximation. 
One important tool in our analysis is the connection between the error of the auxiliary and primary variables, discovered in \cite{XL}. 
Numerical examples are also provided to demonstrate the convergence rates and to show that the proposed method has the capability to simulate the head-on collision and finite time blow-up behavior well.


The structure of this paper is as follows. Section \ref{nota} provides an 
introduction of the notations used throughout the paper. In Section \ref{LDG}, 
the proposed LDG scheme and the corresponding numerical fluxes are presented. Section \ref{thms} 
devotes to the main theoretical results on the error estimate of the proposed methods. Depending on different cases of the
parameters $a,~b,~c,~d$, several theorems are provided to discuss the 
convergence rate of the numerical methods. Numerical results are provided in Section \ref{num},
which consists of two tests: accuracy test and wave collisions. Conclusion remarks are given in Section \ref{conclusion}.

\section{Notations and projections}\label{nota} \setcounter{equation}{0} \setcounter{table}{0} \setcounter{figure}{0}

The computational domain $I$ can be divided into subintervals, denoted by $I_j=[x_{j-\frac{1}{2}},x_{j+\frac{1}{2}}]$ for $1,2,\cdots,N$. The 
center of each cell is $x_j=\frac{1}{2}(x_{j-\frac{1}{2}}+x_{j+\frac{1}{2}})$ 
and the mesh size of each cell is $h_j=x_{j+\frac{1}{2}}-x_{j-\frac{1}{2}}$, with the maximum mesh size denoted by $h=\max h_j$. 
We further assume that the mesh to be quasi-uniform, i.e., the ratio ${h}/{h_j}$ always stay bounded for all $j$ during mesh refinement. Denote $H^k(I_j)$ as the standard Hilbert space, and $P^k(I_j)$ as the space of polynomials of degree up to $k$ on the cell $I_j$. The piecewise polynomial space $V_h^k$ is defined as
\[V^k_h=\{v: v|_{I_j}\in P^k(I_j),\, j=1,2,\cdots,N\}.\]

The numerical solutions that approximate the unknown functions $u$, $\eta$ are denoted by $u_h$ and $\eta_h$ in the solution space $V_h^k$. For ease of presentation, the following notations are introduced: for any function $g(x)$, and $x_\lambda$ inside the computational domain where $\lambda$
 comes from some index set, define
\[g_{\lambda}=g(x_\lambda),~~g^\pm _{\lambda}=\lim_{x\to x_{\lambda}^\pm}g(x), ~~~[g]_{\lambda}=g_{\lambda}^+-g_{\lambda}^-, ~~~\{g\}_{\lambda}=\frac{1}{2}(g^+_{\lambda}+g^-_{\lambda}).\] 
We introduce the inner product notation $(v, w)_{I_j} = \int_{I_j} v w ~dx$, and $(v, w)_I = \sum_{j} (v, w)_{I_j}$. 
In our analysis, we use the following norms:
\begin{itemize}
  \item $\norm{v}=\norm{v}_{L^2}$, and $\norm{v}_\infty$ denote the $L^2$ norm and $L^\infty$ norm of $v$;
  \item $\norm{v}_{\Gamma_h}=\sqrt{\sum\limits^N_{j=1}(v^{\pm}_{j+ 
  \frac{1}{2}})^2},$ where $\Gamma_h$ denotes the set of boundary points of all the elements $I_j$;
  \item $|[v]|=\sqrt{\sum\limits^N_{j=1}[v]^2_{j+\frac{1}{2}}}$.
\end{itemize}
Throughout this paper, $C$ denotes a generic positive constant independent of the spatial and temporal step sizes $h$ and $\Delta t$, which may have different values at different occasions.

Next, we discuss some projection operators that will be used in the error estimate analysis, and present their approximation property.
We define the standard $L^2$ projection $\mathcal{P}$ in the following way: for any element $I_j$,
\[ \int_{I_j}(\mathcal{P}g-g(x))v(x)dx=0,~~\forall v(x)\in  P^k(I_j).\] 
The Radau projection $\mathcal{P}^\pm$ is defined such that for any element $I_j$, we have
\[\int_{I_j}(\mathcal{P}^{+}g-g(x))v(x)dx=0,~~\forall v(x)\in P^{k-1}(I_j),   \quad \text{and} \quad (\mathcal{P}^+g)^+_{j-\frac{1}{2}}=g_{j-\frac{1}{2}}, \]
\[\int_{I_j}(\mathcal{P}^{-}g-g(x))v(x)dx=0,~~\forall v(x)\in P^{k-1}(I_j),   \quad \text{and} \quad (\mathcal{P}^-g)^-_{j+\frac{1}{2}}=g_{j+\frac{1}{2}}. \]
The following approximation property of these projections has been studied in \cite{PC} and is listed below:
\begin{equation}\label{interpolation}
  \norm{\mathcal{P}^*g-g}+h\norm{\mathcal{P}^*g-g}_{\infty}+{h}^{\frac12}\norm{\mathcal{P}^*g-g}_{\Gamma_h}\le Ch^{k+1}.
\end{equation}
where $\mathcal{P}^*=\mathcal{P}, \mathcal{P}^\pm$. 

Another useful tool to be used throughout this paper is the \textit{inverse property}: for any $g\in V_h^k$, we have
\begin{equation}\label{inverse}
  \norm{g_x}\le Ch^{-1}\norm{g}, \qquad 
  \norm{g}_{\Gamma_h}\le Ch^{-\frac{1}{2}}\norm{g}.
\end{equation}

\section{The local discontinuous Galerkin method}\label{LDG} \setcounter{equation}{0} \setcounter{table}{0} \setcounter{figure}{0}

Following the idea of LDG method, we first rewrite the system of equation (\ref{1}) as a system of first order equations
\begin{subequations} \label{ldg_method}
\begin{alignat}{3}
&\eta_t+u_x+(\eta u)_x+aq_x-b\theta_t=0,	 \quad  &w=\eta_x,		\quad  &\theta=\zeta=w_x,\\
&u_t+\eta_x+f(u)_x+c\zeta_x-dp_t=0, 		\quad  &v=u_x,		\quad  &p=q=v_x,
\end{alignat}
\end{subequations}
where $f(u)={u^2}/{2}$.
Denote the nonlinear flux term as $F(\eta, u)=(u+\eta u, \eta+u^2/2)^T$, and introduce its approximation via the standard Lax-Friedrichs numerical flux: 
\begin{equation}
\widehat{F}=\left(\begin{matrix} 
\displaystyle \{u_h\}+\{\eta_h u_h\}-\frac{\alpha}{2}[\eta_h] \\
\displaystyle \{\eta_h\}+\frac{1}{2}\{u^2_h\}-\frac{\alpha}{2}[u_h]\end{matrix}\right)
=:\left(
\begin{matrix} \displaystyle\widehat{F}_1\\ \displaystyle\widehat{F}_2\end{matrix}\right),
\end{equation}
where $\alpha=\max(|u|+\sqrt{|1+\eta|})$ with the maximum taking over the whole region.
The LDG method for approximating \eqref{ldg_method} is given as follows: find $\eta_h, u_h, v_{h}, w_{h},  p_h, \theta_h,q_h,\zeta_h \in V^k_h$, 
such that the following equations 
\begin{eqnarray}
  &&\int_{I_j}(\eta_h-b\theta_h)_t\rho dx-\int_{I_j}(u_h+u_h\eta_h+aq_h)\rho_x 
  dx \notag 
  +(\widehat{F}_1+a\widehat{q}_{h})\rho^-_{j+\frac{1}{2}}-
  (\widehat{F}_1+a\widehat{q}_{h})\rho^+_{j-\frac{1}{2}}=0, \notag \\ 
  &&\int_{I_j}(u_h-dp_h)_t\widetilde{\rho}dx-
  \int_{I_j}(\eta_h+f(u_h))+c\zeta_h)\widetilde{\rho}_xdx 
  +(\widehat{F}_2+c \widehat{\zeta}_{h}){\widetilde{\rho}}^-_{j+\frac{1}{2}}
  -(\widehat{F}_2+c\widehat{\zeta}_{h})){\widetilde{\rho}}^+_{j-\frac{1}{2}}=0,  \notag \\
  &&\int_{I_j}v_h\phi dx+\int_{I_j}u_h \phi_{x} dx-\widehat{u}_h\phi^-_{j+\frac{1}{2}}
  +\widehat{u}_h\phi^+_{j-\frac{1}{2}}=0,~~  \notag\\
  &&\int_{I_j}p_h\psi dx+\int_{I_j}v_{h} \psi_x dx-\widehat{v}_{h}\psi^-_{j+\frac{1}{2}}
  +\widehat{v}_{h}\psi^+_{j-\frac{1}{2}}=0, \notag \\
  &&\int_{I_j}q_h\varphi dx+\int_{I_j}v_{h} \varphi_{x} dx-\widecheck{v}_{h}\varphi^-_{j+\frac{1}{2}}
  +\widecheck{v}_{h}\varphi^+_{j-\frac{1}{2}}=0,\notag \\
  &&\int_{I_j}w_h\widetilde{\phi} dx+\int_{I_j}\eta_h\widetilde{\phi}_x dx
  -\widehat{\eta}_h\widetilde{\phi}^-_{j+\frac{1}{2}}
  +\widehat{\eta}_h\widetilde{\phi}^+_{j-\frac{1}{2}}=0,   \label{scheme}\\
  &&\int_{I_j}\theta_h\widetilde{\psi} dx+\int_{I_j}w_h \widetilde{\psi}_x dx
  -\widehat{w}_h\widetilde{\psi}^-_{j+\frac{1}{2}}
  +\widehat{w}_h\widetilde{\psi}^+_{j-\frac{1}{2}}=0, \notag\\
  &&\int_{I_j}\zeta_h\widetilde{\varphi} dx+\int_{I_j}w_{h} \widetilde{\varphi}_{x} dx
  -\widecheck{w}_{h}\widetilde{\varphi}^-_{j+\frac{1}{2}}
  +\widecheck{w}_{h}\widetilde{\varphi}^+_{j-\frac{1}{2}}=0, \notag
\end{eqnarray}
hold for all the test functions $\rho, \phi,\psi,\widetilde{\rho}, \widetilde{\phi},$ $ \widetilde{\psi}, \varphi,\widetilde{\varphi}\in V^k_h$.
 We choose the following numerical fluxes
\begin{eqnarray}\label{ss1}
  \begin{split}
    \widehat{q}_{h}=q_h^-,~~\widehat{\eta}_h=\eta^+_h,~~\widehat{w}_h=w^-_h,~~\widecheck{w}_{h}=w_{h}^-+\sign(a)\frac{\lambda[v_h]}{2},~~ \\
    \widehat{\zeta}_{h}=\zeta_h^-,~~\widehat{u}_h=u^+_h,~~\widehat{v}_h=v^-_h,~~\widecheck{v}_{h}=v^+_h+\sign(c)\frac{\lambda[w_h]}{2},~~
  \end{split}
\end{eqnarray}
  at the cell interfaces, where $\sign(a)$ represents the sign of parameter $a$ and $\lambda$ is a positive constant. Without loss of generality, $\lambda$ is taken as $1$ in the remainder of this paper. 

Notice that this choice of numerical fluxes is not unique. To ensure stability, the essential guideline is to 
take $\widehat{q}_{h}$ and $\widehat{u}_h$ from the opposite direction, $~\widehat{\zeta}_{h}$ and $\widehat{\eta}_h$ 
from the opposite direction. Another remark is that $w_x$ and $v_x$ are both approximated by two variables ($\theta$, $\zeta$ and $p$, $q$),
which leads to two different fluxes for $v_h$ and $w_h$. This will be useful in our error estimate.
For short hand notation, we introduce the DG discretization operator $\mathcal{A}_j$, defined in the following way:
\begin{eqnarray*}
  &\mathcal{A}_j( f,g; \hat{f})=\int_{I_j}fg_xdx-
  \big( \hat{f}g^-\big)_{j+\frac{1}{2}}+\big(\hat{f}g^+\big)_{j-\frac{1}{2}},
\end{eqnarray*}
for any $g \in V_h^k$ and $f\in H^{k+1}(I)$, with $\hat{f}$ being the numerical fluxes. 

The initial data $u_h(x,0),~\eta_h(x,0),~v_h(x,0),~w_h(x,0),~p_h(x,0)$ are chosen in the following way to ensure the good approximation property
of these initial data at $t=0$. Given initial condition $u(x,0)=u_0(x)$ and $\eta(0,x)=\eta_0(x)$, define 
  \[u_h(x,0)=\mathcal{P}^+u(x,0),~~\eta_h(x,0)=\mathcal{P}^+\eta(x,0),\]
  and let $v_h(x,0),~w_h(x,0),~p_h(x,0)$ be the solutions to the following 
  equations: 
  \begin{eqnarray}
  &&\int_{I_j}v_h\phi dx+\int_{I_j}u_h \phi_{x} dx-\widehat{u}_h\phi^-_{j+\frac{1}{2}}
  +\widehat{u}_h\phi^+_{j-\frac{1}{2}}=0,~~  \label{v}\\
  &&\int_{I_j}w_h\widetilde{\phi} dx+\int_{I_j}\eta_h\widetilde{\phi}_x dx
  -\widehat{\eta}_h\widetilde{\phi}^-_{j+\frac{1}{2}}
  +\widehat{\eta}_h\widetilde{\phi}^+_{j-\frac{1}{2}}=0, \label{w}\\
  &&\int_{I_j}p_h\psi dx+\int_{I_j}v_{h} \psi_x dx-\widehat{v}_{h}\psi^-_{j+\frac{1}{2}}
  +\widehat{v}_{h}\psi^+_{j-\frac{1}{2}}=0. \label{p}
\end{eqnarray}
\begin{lemma}\label{initial}
The numerical initial conditions satisfy
  \begin{align*}
  &\norm{u(x,0)-u_h(x,0)}^2+\norm{v(x,0)-v_h(x,0)}^2\le 
  Ch^{2k+2},\\
  &\norm{\eta(x,0)-\eta_h(x,0)}^2+\norm{w(x,0)-w_h(x,0)}^2\le Ch^{2k+2},\\
  &\norm{p(x,0)-p_h(x,0)}^2\le Ch^{2k}.
  \end{align*}
\end{lemma}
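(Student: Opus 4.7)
The plan is to treat the three pairs of estimates in order of increasing difficulty, using the structure of the defining equations (\ref{v})--(\ref{p}) together with the Radau projection and $L^2$ projection. The bounds on $u-u_h$ and $\eta-\eta_h$ at $t=0$ are immediate: by construction $u_h(\cdot,0)=\mathcal{P}^+u_0$ and $\eta_h(\cdot,0)=\mathcal{P}^+\eta_0$, so the approximation property (\ref{interpolation}) yields $O(h^{k+1})$ in $L^2$, hence $O(h^{2k+2})$ after squaring.

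For $v-v_h$ and $w-w_h$, I would exploit a convenient cancellation. The key observation is that the numerical flux $\widehat{u}_h=u_h^+$ and the Radau identity $(\mathcal{P}^+ u_0)^+_{j-\frac{1}{2}}=u_0(x_{j-\frac{1}{2}})$ together imply $\widehat{u}_h=u_0$ at every cell interface, so when I subtract the LDG equation (\ref{v}) from the exact relation $v=u_x$ tested against $\phi\in V_h^k$, every boundary term vanishes. What remains is $\int_{I_j}(v-v_h)\phi\,dx+\int_{I_j}(u_0-\mathcal{P}^+u_0)\phi_x\,dx=0$, and since $\phi_x|_{I_j}\in P^{k-1}(I_j)$, the second integral also vanishes by the defining property of $\mathcal{P}^+$. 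Thus $v_h$ coincides with the $L^2$ projection $\mathcal{P}v$ on every cell, and (\ref{interpolation}) delivers the $O(h^{k+1})$ bound. The argument for $w_h$ against (\ref{w}) is identical, using $\widehat{\eta}_h=\eta_h^+$ and $\eta_h=\mathcal{P}^+\eta_0$.

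The bound for $p-p_h$ is the main obstacle, and is where the rate degrades. Now $v_h=\mathcal{P}v$ is only an $L^2$ projection, not a Radau one, so $\widehat{v}_h=v_h^-$ no longer matches $v$ at the interfaces and the boundary terms do not cancel. My plan is to test the error equation associated with (\ref{p}) by $e_p:=\mathcal{P}p-p_h\in V_h^k$, use $L^2$-orthogonality $(p-\mathcal{P}p,e_p)_I=0$ to convert the left-hand side into $\|e_p\|^2$, and then estimate the two remaining pieces: the volume integral $(v-v_h,(e_p)_x)_I$ via Cauchy--Schwarz together with the inverse inequality (\ref{inverse}) to get $\|v-v_h\|\cdot h^{-1}\|e_p\|\le Ch^k\|e_p\|$; and the interface sum $\sum_j (v-v_h^-)_{j+\frac12}[e_p]_{j+\frac12}$ by $\|v-\mathcal{P}v\|_{\Gamma_h}\cdot h^{-\frac12}\|e_p\|\le Ch^k\|e_p\|$ using (\ref{interpolation}) and (\ref{inverse}). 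Dividing through yields $\|e_p\|\le Ch^k$, and a triangle inequality with $\|p-\mathcal{P}p\|\le Ch^{k+1}$ gives $\|p-p_h\|\le Ch^k$, exactly the stated $h^{2k}$ bound. The sub-optimality reflects the fact that the inverse inequality trades one power of $h$ for the derivative of $e_p$, which is intrinsic to this flux configuration and cannot be recovered without an additional Radau-matching structure.
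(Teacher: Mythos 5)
Your proof is correct and follows essentially the same route as the paper: exactness of $u_h,\eta_h$ from the Radau projection, the interface/volume cancellations showing $v_h=\mathcal{P}v$ and $w_h=\mathcal{P}w$, and a test with $\mathcal{P}p-p_h$ plus the trace estimate $\norm{v-\mathcal{P}v}_{\Gamma_h}\le Ch^{k+1/2}$ to get the suboptimal $O(h^k)$ bound for $p$. The only cosmetic difference is that you bound the volume term $(v-v_h,(e_p)_x)_I$ by Cauchy--Schwarz and the inverse inequality, whereas the paper observes it vanishes identically since $v-v_h=v-\mathcal{P}v$ is $L^2$-orthogonal to $V_h^k\supset P^{k-1}$; both yield the stated rate.
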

\begin{proof} 
By the projection property (\ref{interpolation}), we have
\begin{align*}
&\norm{u(x,0)-u_h(x,0)}=\norm{\mathcal{P}^+u(x,0)-u(x,0)}\le Ch^{k+1},\\
&\norm{\eta(x,0)-\eta_h(x,0)}=\norm{\mathcal{P}^+\eta(x,0)-\eta(x,0)}\le Ch^{k+1}.
\end{align*}
The equation (\ref{v}) leads to the error equation
\begin{align*}
  \int_{I_j}(v-v_h)\phi dx+\int_{I_j}(u-\mathcal{P}^+u)\phi_x dx
  -\big((u-\mathcal{P}^+u)^+\phi^-\big)_{j+\frac{1}{2}}
  +\big((u-\mathcal{P}^+u)^+\phi^+\big)_{j-\frac{1}{2}}=0,
\end{align*}
which reduces to
\[\int_{I_j}(v-v_h)\phi dx=0,\]
as a result of the projection $\mathcal{P}^+$.
Choose $\phi=\mathcal{P}v-v_h$, and decompose $v-v_h=(\mathcal{P}v-v_h)-(\mathcal{P}v-v)$. 
We have
\[\int_{I_j}(\mathcal{P}v-v_h)^2dx=\int_{I_j}(\mathcal{P}v-v)(\mathcal{P}v-v_h)dx=0.\]
Therefore we conclude $v_h(x,0)=\mathcal{P}v(x,0)$, and
$\norm{v(x,0)-v_h(x,0)}\le Ch^{k+1}$.
Similarly, we can show that
$\norm{w(x,0)-w_h(x,0)}\le Ch^{k+1}$.
The equation (\ref{p}) leads to
 \[\int_{I_j}(p-p_h)\psi dx+\int_{I_j}(v-\mathcal{P}v)\psi_xdx
 -\big((v-\mathcal{P}v)^-\psi^-\big)_{j+\frac{1}{2}}+
 \big((v-\mathcal{P}v)^-\psi^+\big)_{j-\frac{1}{2}}=0.\]
By choosing $\psi=\mathcal{P}p-p_h$ and decomposing $p-p_h=(\mathcal{P}p-p_h)-(\mathcal{P}p-p)$, we have 
 \[\int_{I_j}(\mathcal{P}p-p_h)^2dx=\Big((\mathcal{P}v-v)^-[\mathcal{P}p-p_h]\Big)_{j+\frac{1}{2}}.\]
Summing over all cells $I_j$, utilizing the inverse property (\ref{inverse}) and 
 the projection property (\ref{interpolation}), we obtain
 \begin{align*}
 &  \int_I(\mathcal{P}p-p_h)^2dx=\sum^N_{j=1}\Big((\mathcal{P}v-v)^-[\mathcal{P}p-p_h]\Big)_{j+\frac{1}{2}} \\
 & \qquad
 =\sum^N_{j=1}\Big(h^{-\frac{1}{2}}(\mathcal{P}v-v)^-h^{\frac{1}{2}}[\mathcal{P}p-p_h]\Big)_{j+\frac{1}{2}}
 \le \frac{1}{2}\int_I (\mathcal{P}p-p_h)^2dx+Ch^{2k},
 \end{align*}
which leads to $\norm{p(x,0)-p_h(x,0)}^2\le Ch^{2k}$ and finishes the proof.
\end{proof}
Note that the (suboptimal) initial error of $p_h$ is only used in the proof of Theorem \ref{thm4}, and the error estimate in that theorem is also suboptimal. One may construct a different choice of the numerical initial values to ensure the optimal error of $p_h$ as well, see for instance \cite{HX2014}.

\section{Error estimate}\label{thms}\setcounter{equation}{0} \setcounter{table}{0} \setcounter{figure}{0}

In this section, we derive the error estimate of the proposed semi-discrete LDG method \eqref{scheme}. The analysis will be separated into five subsections, each corresponds to one of five different cases of the parameters $a$, $b$, $c$ and $d$ to be studied.

We start by defining the error terms and presenting the error equations. For any function $g$ and its numerical approximation $g_h$,
the error can be separated into two components
\begin{eqnarray}\label{ss}
	e^g := g-g_h = \xi^g-\epsilon^g,
\end{eqnarray}
where 
$$ \xi^g=\mathcal{P}^*g-g_h,\qquad \epsilon^{g}=\mathcal{P}^*g-g,$$
denote the error between a particular projection $\mathcal{P}^*$ of $g$ and the numerical solution, and the projection error, respectively.
Here $\mathcal{P}^*$ could be $\mathcal{P}$ or $\mathcal{P}^\pm$, and may vary for different $g$. For our unknown variables, the following
projections are taken as
\begin{eqnarray*}
\mathcal{P}^+\eta, \qquad \mathcal{P}\theta, \qquad \mathcal{P}\zeta,
\qquad \mathcal{P}w, \qquad \mathcal{P}^+u, \qquad \mathcal{P}p, \qquad \mathcal{P}q,  \qquad \mathcal{P}v.
\end{eqnarray*}
We would like to comment that these projections are for the purpose of proving the error estimate only,
and won't be used in our numerical method \eqref{scheme} nor its numerical implementation. 

Clearly the exact solutions also satisfy the equations (\ref{scheme}), and one can show that the following error equations
\begin{align}
  &
  \int_{I_j} ((\xi^\eta-\epsilon^\eta)-b(\xi^\theta-\epsilon^\theta))_t \rho dx
 -a\mathcal{A}_j(\xi^q-\epsilon^q,\rho;(\xi^q-\epsilon^q)^-)	\label{s1}\\
 & \quad
 =\int_{I_j} ((\xi^{u}-\epsilon^{u})+(u\eta-u_h\eta_h)) \rho_x dx - 
 ((u+\eta u-\widehat{F}_1)\rho^-)_{j+\frac{1}{2}}+((u+\eta u-\widehat{F}_1)\rho^+)_{j-\frac{1}{2}}, \notag\\ 
  &
  \int_{I_j} ((\xi^u-\epsilon^u)-d(\xi^p-\epsilon^p))_t \widetilde{\rho} dx
  -c\mathcal{A}_j(\xi^\zeta-\epsilon^\zeta,\widetilde{\rho};(\xi^\zeta-\epsilon^\zeta)^-) \label{s2}  \\
 & \quad
=\int_{I_j} ((\xi^{\eta}-\epsilon^{\eta})+f(u)-f(u_h)) \widetilde{\rho}_x dx
  -((\eta+f(u)-\widehat{F}_2)\widetilde{\rho}^-)_{j+\frac{1}{2}}
  +((\eta+f(u)-\widehat{F}_2)\widetilde{\rho}^+)_{j-\frac{1}{2}},  \notag\\
  &
  \int_{I_j} (\xi^v-\epsilon^v) \phi dx +\mathcal{A}_j(\xi^u-\epsilon^u,\phi;(\xi^u-\epsilon^u)^+)=0, \label{s3}\\
  &
  \int_{I_j} (\xi^p-\epsilon^p)\psi dx +\mathcal{A}_j(\xi^v-\epsilon^v,\psi;(\xi^v-\epsilon^v)^-)=0,\label{s4}\\
  &
  \int_{I_j} (\xi^q-\epsilon^q)\varphi dx+\mathcal{A}_j(\xi^v-\epsilon^v,\varphi;(\xi^v-\epsilon^v)^++\frac{\sign(c)}{2}[\xi^w-\epsilon^w])=0,\label{s5}\\
  &
  \int_{I_j} (\xi^w-\epsilon^w)\widetilde{\phi} dx+\mathcal{A}_j(\xi^\eta-\epsilon^\eta,\widetilde{\phi};(\xi^\eta-\epsilon^\eta)^+)=0,\label{s6}\\
  &
  \int_{I_j} (\xi^\theta-\epsilon^\theta) \widetilde{\psi} dx+\mathcal{A}_j(\xi^w-\epsilon^w,\widetilde{\psi};(\xi^w-\epsilon^w)^-)=0,\label{s7}\\
  &
  \int_{I_j} (\xi^\zeta-\epsilon^\zeta) \widetilde{\varphi} dx+ \mathcal{A}_j(\xi^w-\epsilon^w,\widetilde{\varphi};(\xi^w-\epsilon^w)^-+\frac{\sign(a)}{2}[\xi^v-\epsilon^v])=0,\label{s8}
\end{align}
hold for all the test functions $\rho, \phi,\psi,\widetilde{\rho}, \widetilde{\phi},$ $ \widetilde{\psi}, \varphi,\widetilde{\varphi}\in V^k_h$. 

Before stating the main results on error estimate, we would like to provide the following lemma that will be frequently used in the proof. 
This lemma, presented in \cite[Lemma 2.4]{XL}, provides the relation between the error of the variable $g_h$ and that of the other variable which approximates $g_x$.
  \begin{lemma}\label{xiaol}
    For $u_h,\eta_h,v_h,w_h$ defined in the DG method \eqref{scheme}, there exist the following inequalities 
    \begin{align*}
    &\norm{\xi^u_x}+h^{-\frac{1}{2}}|[\xi^u]|\le C\norm{\xi^v},\qquad
    \norm{\xi^\eta_x}+h^{-\frac{1}{2}}|[\xi^\eta]|\le C\norm{\xi^w}.
    \end{align*}
  \end{lemma}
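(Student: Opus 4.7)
The plan is to exploit the error equations \eqref{s3} and \eqref{s6}, using the defining properties of the projections $\mathcal{P}$ and $\mathcal{P}^+$ to eliminate every $\epsilon$ term, and then to make two targeted test-function choices that control the derivative and the jump separately.

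First I would reduce \eqref{s3}. Since $\epsilon^v=\mathcal{P}v-v$ is $L^2$-orthogonal to $P^k(I_j)$, the term $\int_{I_j}\epsilon^v\phi\,dx$ vanishes for every $\phi\in V_h^k$. For $\epsilon^u=\mathcal{P}^+u-u$, the volume integral $\int_{I_j}\epsilon^u\phi_x\,dx$ vanishes because $\phi_x\in P^{k-1}(I_j)$, and the one-sided values $(\epsilon^u)^+_{j\pm 1/2}$ vanish because the Radau projection $\mathcal{P}^+$ interpolates $u$ at the left endpoint of every cell. Hence \eqref{s3} collapses to $\int_{I_j}\xi^v\phi\,dx+\int_{I_j}\xi^u\phi_x\,dx-((\xi^u)^+\phi^-)_{j+1/2}+((\xi^u)^+\phi^+)_{j-1/2}=0$, and integration by parts on the second integral produces the local identity $\int_{I_j}\xi^u_x\phi\,dx+[\xi^u]_{j+1/2}\phi^-_{j+1/2}=\int_{I_j}\xi^v\phi\,dx$, valid for every $\phi\in P^k(I_j)$.

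Next I would isolate the jump. Let $\omega_j\in P^k(I_j)$ be the unique polynomial with value $1$ at $x_{j+1/2}$ and $\int_{I_j}\omega_j r\,dx=0$ for all $r\in P^{k-1}(I_j)$; a shifted-and-scaled Legendre polynomial of degree $k$ realizes it explicitly, with $\norm{\omega_j}_{I_j}\le Ch_j^{1/2}$. Taking $\phi=\omega_j$ annihilates the $\int_{I_j}\xi^u_x\phi\,dx$ term because $\xi^u_x\in P^{k-1}(I_j)$, reducing the identity to $[\xi^u]_{j+1/2}=\int_{I_j}\xi^v\omega_j\,dx$; Cauchy--Schwarz then yields $|[\xi^u]_{j+1/2}|\le Ch_j^{1/2}\norm{\xi^v}_{I_j}$, and squaring and summing over $j$ gives $h^{-1/2}|[\xi^u]|\le C\norm{\xi^v}$. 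To bound the derivative I would take $\phi=\xi^u_x$ in the same local identity and apply Cauchy--Schwarz together with the inverse trace estimate $|(\xi^u_x)^-_{j+1/2}|\le Ch_j^{-1/2}\norm{\xi^u_x}_{I_j}$ derived from \eqref{inverse}, obtaining $\norm{\xi^u_x}_{I_j}\le\norm{\xi^v}_{I_j}+Ch_j^{-1/2}|[\xi^u]_{j+1/2}|$; squaring, summing, and inserting the jump estimate give $\norm{\xi^u_x}\le C\norm{\xi^v}$. The $(\eta,w)$ half of the lemma follows verbatim from \eqref{s6} after renaming $(u,v)\mapsto(\eta,w)$.

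The main obstacle I anticipate is the construction and scaling analysis of the special test function $\omega_j$: everything else reduces to Cauchy--Schwarz, the projection properties, and the inverse inequalities already collected in Section \ref{nota}. Without this isolation trick the jump term in the derivative bound cannot be controlled independently of $\norm{\xi^u_x}$, and one would be stuck in a circular estimate.
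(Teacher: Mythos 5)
Your argument is correct, and it is worth noting that the paper itself does not prove this lemma at all: it is imported verbatim from the cited reference \cite[Lemma 2.4]{XL}, so there is no in-paper proof to compare against. Your derivation is a valid self-contained one and follows the standard route for such superconvergence-type relations: the Radau projection $\mathcal{P}^+$ for $u$ (resp.\ $\eta$) and the $L^2$ projection for $v$ (resp.\ $w$) kill every $\epsilon$ contribution in \eqref{s3} (resp.\ \eqref{s6}), integration by parts yields the local identity $\int_{I_j}\xi^u_x\phi\,dx+[\xi^u]_{j+\frac12}\phi^-_{j+\frac12}=\int_{I_j}\xi^v\phi\,dx$, and the two test-function choices --- the scaled Legendre polynomial $\omega_j$ to isolate the jump, then $\phi=\xi^u_x$ combined with the already-established jump bound to control the derivative --- close the estimate without circularity. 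All the scalings check out: $\|\omega_j\|_{I_j}\le Ch_j^{1/2}$, the local inverse trace inequality gives $|(\xi^u_x)^-_{j+\frac12}|\le Ch_j^{-1/2}\|\xi^u_x\|_{I_j}$, and summing over cells produces exactly the two stated inequalities. The only presentational caveat is the trivial one of dividing by $\|\xi^u_x\|_{I_j}$ (handle the zero case separately); otherwise the proof is complete.
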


To deal with the nonlinear term appearing in this model, we make the following a priori error estimate assumption
  \[\text{for}~ h ~\text{small enough}, ~~\norm{u-u_h}+\norm{\eta-\eta_h}\le h. \] 
The same technique has been studied in \cite{YS,XCY,XL} to treat the nonlinear term in the KdV, Keller-Segel and BBM equations.
We refer to these papers for the detailed explanation of this technique, and the verification to justify why such assumption could be made.

\subsection{The case of $a,~c<0~\text{and}~b,~d>0$}
\begin{theorem}\label{thm1}
  When $a,~c<0~\text{and}~b,~d>0$, let $u,\eta$ be the exact solutions to the system (\ref{1}) which are sufficiently smooth and bounded. Let $\eta_h,u_h\in V^k_h$ be 
  the numerical solutions of the LDG scheme (\ref{scheme}). For small enough $h$, there holds the following error estimate
  \begin{equation}\label{thm}
    \norm{u-u_h}^2+\norm{\eta-\eta_h}^2+\norm{v-v_h}^2+\norm{w-w_h}^2\le Ch^{2k+1}.
  \end{equation}
  \end{theorem}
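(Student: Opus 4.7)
The plan is to derive an energy-type inequality for the combination
$\|\xi^\eta\|^2 + \|\xi^u\|^2 + b\|\xi^w\|^2 + d\|\xi^v\|^2$ by choosing matched test functions in the error equations \eqref{s1}--\eqref{s8}, and then close the argument with Gronwall. The natural choices are $\rho = \xi^\eta$ in \eqref{s1} and $\widetilde\rho = \xi^u$ in \eqref{s2}, which produce the leading $\frac{1}{2}\frac{d}{dt}(\|\xi^\eta\|^2+\|\xi^u\|^2)$. To handle the coupling $-b\xi^\theta_t$ in \eqref{s1}, I would differentiate \eqref{s7} in time and test against $\widetilde{\psi} = b\xi^w$ (together with $\widetilde{\phi}=b\xi^w_t$ in \eqref{s6}); after adding, the mixed boundary pieces cancel thanks to the flux pair $\widehat{w}_h = w_h^-$, $\widehat{\eta}_h = \eta_h^+$, leaving $\frac{b}{2}\frac{d}{dt}\|\xi^w\|^2$ plus projection terms. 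The same construction applied to \eqref{s3}--\eqref{s4} with $\phi = d\xi^v_t$, $\psi = d\xi^v$ produces $\frac{d}{2}\frac{d}{dt}\|\xi^v\|^2$. This is the mechanism by which $b,d>0$ gives coercivity on the auxiliary variables.

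For the dispersive terms involving $-a\xi^q_x$ in \eqref{s1} and $-c\xi^\zeta_x$ in \eqref{s2}, I would test \eqref{s5} with $\varphi = -a\xi^w$ and \eqref{s8} with $\widetilde\varphi = -c\xi^v$. Using the specific alternating fluxes, together with the extra jump penalty $\sign(a)[\xi^v-\epsilon^v]/2$ and $\sign(c)[\xi^w-\epsilon^w]/2$, and recalling that $\sign(a)=\sign(c)=-1$, the inter-equation cell-boundary contributions should collapse to $-\frac{|a|}{2}|[\xi^v]|^2 - \frac{|c|}{2}|[\xi^w]|^2$ plus quantities involving $\epsilon^v,\epsilon^w$. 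These nonpositive jump terms then dominate the remaining jump residuals produced by the Lax--Friedrichs part of $\widehat F_1,\widehat F_2$. Throughout I would invoke Lemma \ref{xiaol} to replace $\|\xi^u_x\|, \|\xi^\eta_x\|, h^{-1/2}|[\xi^u]|, h^{-1/2}|[\xi^\eta]|$ by $\|\xi^v\|, \|\xi^w\|$, which is essential since the energy functional does not directly control $\|\xi^u_x\|$ or $\|\xi^\eta_x\|$.

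The nonlinear contributions $\eta u-\eta_h u_h$ and $f(u)-f(u_h)$ are linearized as $u(\eta-\eta_h)+\eta_h(u-u_h)$ and $\frac{1}{2}(u+u_h)(u-u_h)$, respectively. The a priori assumption $\|u-u_h\|+\|\eta-\eta_h\|\le h$, combined with the inverse inequality \eqref{inverse} applied to the discrete parts, gives $L^\infty$ bounds on $u_h,\eta_h$ uniformly in $h$, so these volume terms are controlled by $C(\|\xi^\eta\|^2+\|\xi^u\|^2)+Ch^{2k+2}$. The boundary residuals $u+\eta u - \widehat F_1$ and $\eta + f(u) - \widehat F_2$ are expanded around the exact solution; the jump contribution from the Lax--Friedrichs dissipation yields nonnegative terms of the form $\frac{\alpha}{2}|[\xi^\eta]|^2 + \frac{\alpha}{2}|[\xi^u]|^2$ (which we discard) modulo projection terms on $\Gamma_h$ that are bounded by $Ch^{2k+1}$ using \eqref{interpolation}.

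The main obstacles, in my view, are twofold. First, bookkeeping of the cell-interface terms: many contributions from different $\mathcal A_j$ operators must cancel in pairs through the carefully chosen flux pattern in \eqref{ss1}, and any surviving jump quantity on $\xi^v$ or $\xi^w$ must be reabsorbed using the negative jump penalties generated by the $a<0$, $c<0$ terms. Second, the half-order loss: because $\widehat u_h$ and $\widehat\eta_h$ are not alternating with $\widehat v_h,\widehat w_h$ in a fully consistent Radau pattern (and because the Lax--Friedrichs flux is merely symmetric), the projection error on $\Gamma_h$ enters through $\|\epsilon^*\|_{\Gamma_h}\le Ch^{k+1/2}$ and cannot be upgraded, producing the suboptimal $h^{2k+1}$ rate in \eqref{thm}. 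Once the energy inequality $\frac{d}{dt} E(t) \le C\, E(t) + Ch^{2k+1}$ is in place, Gronwall together with the initial bound from Lemma \ref{initial} (which gives $E(0)\le Ch^{2k+2}$) yields the claimed estimate, and the a priori assumption is then justified a posteriori for sufficiently small $h$.
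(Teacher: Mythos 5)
Your overall architecture (energy identity from matched test functions, Lemma~\ref{xiaol} to trade $\|\xi^u_x\|,\ |[\xi^u]|$ for $\|\xi^v\|$ etc., a priori assumption for the nonlinearity, Gronwall plus Lemma~\ref{initial}) matches the paper, but there is a genuine gap at the central step: your choice of test functions does not close the interface/volume bookkeeping when $a\neq c$. Trace the two dispersive chains. Testing \eqref{s1} with $\rho=\xi^\eta$ produces $-a\mathcal{A}_j(\xi^q,\xi^\eta;\cdot)$; pairing it with \eqref{s6} (via a $\xi^q$ component in $\widetilde\phi$) leaves a volume residue proportional to $a\int\xi^w\xi^q$, which you then convert through \eqref{s5} into $a\,\mathcal{A}_j(\xi^v,\xi^w;\cdot)$. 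The symmetric chain starting from $-c\mathcal{A}_j(\xi^\zeta,\xi^u;\cdot)$ in \eqref{s2} terminates in $c\,\mathcal{A}_j(\xi^w,\xi^v;\cdot)$. These two terms combine into pure interface jumps (which the built-in penalties $\sign(a)[\xi^v]/2$, $\sign(c)[\xi^w]/2$ can absorb) only because $\int\xi^v\xi^w_x+\int\xi^w\xi^v_x=\int(\xi^v\xi^w)_x$ telescopes; with unequal weights you are left with $(a-c)\int\xi^v\xi^w_x$, and $\|\xi^w_x\|$ is \emph{not} controlled by your energy functional (no analogue of Lemma~\ref{xiaol} is available here since $\theta,\zeta$ do not enter the energy, and the inverse inequality only gives $\|\xi^w_x\|\le Ch^{-1}\|\xi^w\|$, which destroys the Gronwall step). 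So your construction proves the theorem only in the subcase $a=c$, which is essentially the setting of Theorem~\ref{thm2}, not Theorem~\ref{thm1}.

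The paper's fix is precisely the point you are missing: it weights the two primary equations asymmetrically, taking $\rho=-c\xi^\eta$ and $\widetilde\rho=-a\xi^u$ (and correspondingly $\varphi=-ac\xi^w$, $\widetilde\varphi=-ac\xi^v$, $\widetilde\phi=-bc\xi^w_t+ac\xi^q$, $\phi=-ad\xi^v_t+ac\xi^\zeta$, etc.), so that \emph{both} chains terminate with the common coefficient $ac$ on $\mathcal{A}_j(\xi^v,\xi^w;\cdot)$ and $\mathcal{A}_j(\xi^w,\xi^v;\cdot)$. This is why the resulting energy is the weighted quantity $|c|\|\xi^\eta\|^2+|bc|\|\xi^w\|^2+|a|\|\xi^u\|^2+|ad|\|\xi^v\|^2$ (coercive exactly because $a,c<0$ and $b,d>0$) rather than your unweighted one. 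The remaining ingredients of your sketch (treatment of $\mathcal{H}_j$, $\mathcal{C}_j$, the $h^{2k+1}$ loss coming from boundary projection terms such as $\sum_j(\epsilon^{v,+}[\xi^w])_{j+1/2}$, and the final Gronwall argument) are consistent with the paper once this weighting is put in place.
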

  
Choose the test functions $\rho=-c\xi^\eta,~ \widetilde{\rho}=-a\xi^u$ in (\ref{s1}) and (\ref{s2}), 
$\phi=-ad\xi_t^v+ac\xi^\zeta, ~\widetilde{\phi}=-bc\xi_t^w+ac\xi^q$ in (\ref{s3}) and (\ref{s6}),
$\psi=-ad\xi^u, ~\widetilde{\psi}=-bc\xi^\eta$ after taking the time derivative of (\ref{s4}) and (\ref{s7}),
and $\varphi=-ac\xi^w, ~\widetilde{\varphi}=-ac\xi^v$ in (\ref{s5}) and (\ref{s8}), respectively.
Summing up all these equations and applying the property of the projection operator, i.e. 
\[\int_{I_j}\epsilon^p \phi dx=\int_{I_j}\epsilon^\theta \phi dx=\int_{I_j}\epsilon^v \phi dx
=\int_{I_j}\epsilon^w \phi dx=\int_{I_j}\epsilon_t^v \phi dx=\int_{I_j}\epsilon_t^w \phi dx=0, ~~~
\forall \phi\in V^k_h, \]
and 
\[\mathcal{A}_j(\epsilon^u, \phi; (\epsilon^u)^+)=\mathcal{A}_j(\epsilon^\eta, \phi; (\epsilon^\eta)^+)=0,\qquad
\forall \phi\in V^{k-1}_h, \]
we have
 \begin{equation}\label{eqq}
   \mathcal{P}_j-\mathcal{Q}_j=-c\mathcal{H}_j-a\mathcal{C}_j,
 \end{equation} 
where 
\begin{align}
&\mathcal{H}_j=\int_{I_j}((\xi^{u}-\epsilon^{u})+(u\eta-u_h\eta_h))\xi^{\eta}_x dx-((u+\eta 
  u-\widehat{F}_1)\xi^{\eta,-})_{j+\frac{1}{2}}+((u+\eta u-\widehat{F}_1)\xi^{\eta,+})_{j-\frac{1}{2}},	\label{def_hj}\\
&   \mathcal{C}_j=\int_{I_j}((\xi^{\eta}-\epsilon^{\eta})+f(u)-f(u_h))\xi^{u}_xdx-((\eta+f(u)-\widehat{F}_2)\xi^{u,-})_{j+\frac{1}{2}}
  +((\eta+f(u)-\widehat{F}_2)\xi^{u,+})_{j-\frac{1}{2}},	\label{def_cj}
\end{align}  
and 
\begin{eqnarray}\label{def_pj}
  \begin{split}
      \mathcal{P}_j=&\frac12\frac{\partial}{\partial t} \int_{I_j} 
  -c(\xi^{\eta})^2-bc(\xi^w)^2-a(\xi^{u})^2-ad(\xi^v)^2 dx
  -bc(\xi_t^{w,-}\xi^{\eta,+})_{j-\frac{1}{2}}+bc(\xi_t^{w,-}\xi^{\eta,+})_{j+\frac{1}{2}}\\
  &-ad(\xi_t^{v,-}\xi^{u,+})_{j-\frac{1}{2}}
  +ad(\xi_t^{v,-}\xi^{u,+})_{j+\frac{1}{2}} +ac(\xi^{q,-}\xi^{\eta,+})_{j-\frac{1}{2}}-ac(\xi^{q,-}\xi^{\eta,+})_{j+\frac{1}{2}}
 \\
  &+ac(\xi^{\zeta,-}\xi^{u,+})_{j-\frac{1}{2}}-ac(\xi^{\zeta,-}\xi^{u,+})_{j+\frac{1}{2}}
  -ac(\xi^{v,+}\xi^{w,-})_{j-\frac{1}{2}}+ac(\xi^{v,+}\xi^{w,-})_{j+\frac{1}{2}},
  \end{split}
\end{eqnarray}
\begin{eqnarray}\label{def_qj}
  \begin{split}
  \mathcal{Q}_j&=\int_{I_j} -c\xi^\eta \epsilon_t^\eta-a\xi^u \epsilon_t^u dx
  +ac\big((\epsilon^{q,-}\xi^{\eta,+})_{j-\frac{1}{2}}-(\epsilon^{q,-}\xi^{\eta,-})_{j+\frac{1}{2}}
  +(\epsilon^{\zeta,-}\xi^{u,+})_{j-\frac{1}{2}}-(\epsilon^{\zeta,-}\xi^{u,-})_{j+\frac{1}{2}} \big)
  \\
  &
  -ac\big((\epsilon^{v,+}-\frac{1}{2}[\epsilon^w]+\frac{1}{2}[\xi^w])\xi^{w,+}\big)_{j-\frac{1}{2}}
  +ac\big((\epsilon^{v,+}-\frac{1}{2}[\epsilon^w]+\frac{1}{2}[\xi^w])\xi^{w,-}\big)_{j+\frac{1}{2}}
  \\
  &-ad\big( (\epsilon_t^{v,-}\xi^{u,+})_{j-\frac{1}{2}}-(\epsilon_t^{v,-}\xi^{u,-})_{j+\frac{1}{2}}\big)
  -bc\big((\epsilon_t^{w,-}\xi^{\eta,+})_{j-\frac{1}{2}}-(\epsilon_t^{w,-}\xi^{\eta,-})_{j+\frac{1}{2}}\big)
  \\
  &
  -ac\big((\epsilon^{w,+}-\frac{1}{2}[\epsilon^v]+\frac{1}{2}[\xi^v])\xi^{v,+}\big)_{j-\frac{1}{2}}
  +ac\big((\epsilon^{w,+}-\frac{1}{2}[\epsilon^v]+\frac{1}{2}[\xi^v])\xi^{v,-}\big)_{j+\frac{1}{2}}.
  \end{split}  
\end{eqnarray} 
Summing over all the cells $I_j$, noting $|a|=-a,~|c|=-c$,  yields, 
\begin{eqnarray}\label{eqqq} 
   \sum^N_{j=1}\mathcal{P}_j-\sum^N_{j=1}\mathcal{Q}_j=\sum^N_{j=1}|c|\mathcal{H}_j
   +
  \sum^N_{j=1}|a|\mathcal{C}_j.
\end{eqnarray}

We would like to first establish Lemma \ref{lemma1} and Lemma \ref{lemma2}, and the proof of this theorem is 
a direct result of these two lemmas. Moreover, these lemmas will be used frequently in the proof of 
other theorems in this section.
  
\begin{lemma}\label{lemma1}
For the terms $\mathcal{P}_j$ and $\mathcal{Q}_j$ defined in \eqref{def_pj} and \eqref{def_qj}, we have 
  \begin{align}
    \sum^N_{j=1}\mathcal{P}_j &= \frac12\frac{\partial}{\partial t} \int_I |c|(\xi^{\eta})^2+|bc|(\xi^w)^2+|a|(\xi^{u})^2+|ad|(\xi^v)^2dx,  \\
    \sum^N_{j=1}\mathcal{Q}_j &\le \big(\norm{\xi^{u}}^2+\norm{\xi^{\eta}}^2 +\norm{\xi^w}^2+\norm{\xi^v}^2\big)+Ch^{2k+1}.
  \end{align}
\end{lemma}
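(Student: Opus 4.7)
My plan is to treat the identity for $\sum_j \mathcal{P}_j$ and the inequality for $\sum_j \mathcal{Q}_j$ separately.

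For the identity on $\sum_j \mathcal{P}_j$, I observe that every boundary contribution appearing in $\mathcal{P}_j$ as written in \eqref{def_pj} has the structure $B_{j+1/2} - B_{j-1/2}$ for some trace quantity $B$. Summing over $j = 1, \ldots, N$ and invoking periodic boundary conditions forces each such pair to telescope to zero. What survives is exactly the volume time-derivative $\tfrac{1}{2}\partial_t \int_I (-c(\xi^\eta)^2 - bc(\xi^w)^2 - a(\xi^u)^2 - ad(\xi^v)^2)\,dx$. The sign assumptions $a, c < 0$ and $b, d > 0$ give $-c = |c|$, $-a = |a|$, $-bc = |bc|$, $-ad = |ad|$, which produces the stated form.

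For the estimate on $\sum_j \mathcal{Q}_j$, I split the contributions into volume integrals and interface terms. The two volume pieces $\int_I (-c\,\xi^\eta \epsilon_t^\eta - a\,\xi^u \epsilon_t^u)\,dx$ are handled routinely by Cauchy-Schwarz, Young's inequality, and the projection estimate \eqref{interpolation}, contributing at most $\|\xi^\eta\|^2 + \|\xi^u\|^2 + Ch^{2k+2}$. For the interface terms I telescope each pair $-A_{j-1/2} + A_{j+1/2}$ under periodicity and reindex so that every sum is indexed by $x_{j+1/2}$. The result splits into two groups. Group one consists of sums of the form $\sum_j \epsilon^{*,-}[\xi^\star]_{j+1/2}$ with $* \in \{q, \zeta, v_t, w_t\}$ and $\star \in \{\eta, u\}$; for these I apply Cauchy-Schwarz on $\Gamma_h$, use $\|\epsilon^*\|_{\Gamma_h} \le Ch^{k+1/2}$ from \eqref{interpolation}, and apply Lemma \ref{xiaol} in the form $|[\xi^\eta]| \le Ch^{1/2}\|\xi^w\|$ and $|[\xi^u]| \le Ch^{1/2}\|\xi^v\|$, followed by Young's inequality, producing a bound of type $\|\xi^w\|^2 + \|\xi^v\|^2 + Ch^{2k+2}$.

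The main obstacle is group two, the contributions that arise from the jump pieces $[\xi^w]$ and $[\xi^v]$ built into the numerical fluxes. After telescoping, the relevant sum becomes $-ac\sum_j (\epsilon^{v,+} - \tfrac{1}{2}[\epsilon^w] + \tfrac{1}{2}[\xi^w])[\xi^w]_{j+1/2}$, together with an analogous piece obtained by exchanging the roles of $w$ and $v$. Lemma \ref{xiaol} gives no control on $[\xi^w]$ or $[\xi^v]$, and a naive Cauchy-Schwarz combined with the trace inverse inequality $|[\xi^w]| \le Ch^{-1/2}\|\xi^w\|$ only delivers $Ch^{2k}$, which is too weak for the claimed $Ch^{2k+1}$. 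The resolution is to complete the square: setting $y = [\xi^w]$ and $z = \epsilon^{v,+} - \tfrac{1}{2}[\epsilon^w]$, one has
\begin{equation*}
-ac\sum_j \bigl(z + \tfrac{1}{2}y\bigr)\,y \;=\; -\tfrac{ac}{2}\sum_j (y+z)^2 \;+\; \tfrac{ac}{2}\sum_j z^2.
\end{equation*}
Since $ac > 0$ in the present case, $-\tfrac{ac}{2}\sum_j(y+z)^2 \le 0$ and can be dropped in the upper bound, while $\tfrac{ac}{2}\sum_j z^2 \le C\bigl(\|\epsilon^v\|_{\Gamma_h}^2 + \|\epsilon^w\|_{\Gamma_h}^2\bigr) \le Ch^{2k+1}$ by \eqref{interpolation}. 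The same argument controls the analogous $[\xi^v]$ piece. Collecting the volume, group one, and group two estimates yields the claimed bound.
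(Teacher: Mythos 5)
Your proposal is correct and follows essentially the same route as the paper: telescoping the interface terms in $\mathcal{P}_j$ under periodicity, bounding the volume and the $\epsilon^{*,-}[\xi^{\eta}]$, $\epsilon^{*,-}[\xi^{u}]$ interface sums via the projection estimate \eqref{interpolation} together with Lemma \ref{xiaol}, and exploiting the sign of $ac$ and the quadratic jump term $\tfrac12[\xi^w]^2$ to absorb the cross terms. The only difference is cosmetic: where you complete the square exactly to make the nonpositive contribution manifest, the paper reaches the same cancellation by applying Young's inequality with the tuned constants $\tfrac14$ and $\tfrac12$; both yield the $Ch^{2k+1}$ bound.
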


\begin{proof}
By summing $\mathcal{P}_j$ in \eqref{def_pj} over all the element $I_j$, we have 
\[\sum^N_{j=1}\mathcal{P}_j=\frac12\frac{\partial}{\partial t}\int_I |c|(\xi^{\eta})^2+|bc|(\xi^w)^2+|a|(\xi^{u})^2+|ad|(\xi^v)^2 dx,\]
where all the cell interface terms cancelled due to the periodic boundary conditions. 
Similarly, summing $\mathcal{Q}_j$ in \eqref{def_qj} over all the elements $I_j$ yields
\begin{align} \label{eqn_qj}
  \sum^N_{j=1}\mathcal{Q}_j&=\int_I -c\xi^\eta \epsilon_t^\eta-a\xi^u \epsilon_t^u dx \\
  & \quad
	  +\sum^N_{j=1}\big((-bc\epsilon_t^{w,-}+ac\epsilon^{q,-})[\xi^\eta]\big)_{j+\frac{1}{2}}
	  -\sum^N_{j=1}ac\big((\epsilon^{v,+}-\frac{1}{2}[\epsilon^w]+\frac{1}{2}[\xi^w])[\xi^w]\big)_{j+\frac{1}{2}}  \notag\\
  & \quad
	  +\sum^N_{j=1}\big((-ad\epsilon_t^{v,-}+ac\epsilon^{\zeta,-})[\xi^u]\big)_{j+\frac{1}{2}}
	  -\sum^N_{j=1}ac\big((\epsilon^{w,-}-\frac{1}{2}[\epsilon^v]+\frac{1}{2}[\xi^v])[\xi^v]\big)_{j+\frac{1}{2}} . \notag
\end{align} 
By Young's inequality and the projection property (\ref{interpolation}), we can easily show that 
\[ 
\int_I -c\xi^\eta \epsilon_t^\eta-a\xi^u \epsilon_t^u dx \le \norm{\xi^u}^2+\norm{\xi^\eta}^2+Ch^{2k+2}.
\]
According to  the Young's inequality, the projection property, and Lemma 
\ref{xiaol}, we have
\begin{equation}\label{Yxiaol}
  \sum^N_{j=1}\big((-bc\epsilon_t^{w,-}+ac\epsilon^{q,-})[\xi^\eta]\big)_{j+\frac{1}{2}}
\le C\sum^N_{j=1}h^{-1}[\xi^\eta]^2_{j+\frac{1}{2}} + Ch\left(\norm{\epsilon_t^w}^2_{\Gamma_h}+\norm{\epsilon_t^q}^2_{\Gamma_h}\right)
\le C\norm{\xi^w}^2+Ch^{2k+2}.
\end{equation}

Similarly, we have
\[\sum^N_{j=1}\big((-ad\epsilon_t^{v,-}+ac\epsilon^{\zeta,-})[\xi^u]\big)_{j+\frac{1}{2}}
\le C\norm{\xi^v}^2+Ch^{2k+2}.\]
The Young's inequality and the projection property (\ref{interpolation}) lead to
\[\big|\sum^N_{j=1}\big(\epsilon^{v,+}[\xi^w]\big)_{j+\frac{1}{2}}\Big|\le 
\sum^N_{j=1}\frac{1}{4}[\xi^w]^2_{j+\frac{1}{2}}+Ch^{2k+1}, \qquad
\Big|\sum^N_{j=1}\big([\epsilon^w][\xi^w]\big)_{j+\frac{1}{2}}\Big|\le
\sum^N_{j=1}\frac{1}{2}[\xi^w]^2_{j+\frac{1}{2}}+Ch^{2k+1}.\]
Therefore,  
\[-\sum^N_{j=1}\big((\epsilon^{v,+}-\frac{1}{2}[\epsilon^w]+\frac{1}{2}[\xi^w])[\xi^w]\big)_{j+\frac{1}{2}} \le Ch^{2k+1},\]
and similarly,
\[-\sum^N_{j=1}\big((\epsilon^{w,-}-\frac{1}{2}[\epsilon^v]+\frac{1}{2}[\xi^v])[\xi^v]\big)_{j+\frac{1}{2}} \le Ch^{2k+1}.\]
Collecting all these approximations, Equation \eqref{eqn_qj} becomes
\begin{equation*}
 \sum^N_{j=1}\mathcal{Q}_j\le   (\norm{\xi^u}^2+\norm{\xi^\eta}^2+\norm{\xi^v}^2+\norm{\xi^w}^2)+Ch^{2k+1},
\end{equation*}
which finishes the proof.
\end{proof}

\begin{lemma}\label{lemma2}
For the approximation of the nonlinear terms $\mathcal{H}_j$ and $\mathcal{C}_j$ defined in \eqref{def_hj} and \eqref{def_cj}, we have
\begin{align} \label{eq_lemma2_1}
  \sum^N_{j=1}|c|\mathcal{H}_j\le 
  C\big(\norm{\xi^u}^2+\norm{\xi^\eta}^2+\norm{\xi^w}^2\big)+Ch^{2k+2},\\
  \sum^N_{j=1}|a|\mathcal{C}_j\le C\big(\norm{\xi^u}^2+\norm{\xi^\eta}^2+\norm{\xi^v}^2\big)+Ch^{2k+2}. \label{eq_lemma2_2}
\end{align}
\end{lemma}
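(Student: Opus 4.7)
The plan is to treat $\mathcal{H}_j$ in full detail and then observe that $\mathcal{C}_j$ follows by a completely parallel argument. First I will linearise the nonlinear volume integrand via
\[
u\eta - u_h\eta_h = u\,e^\eta + \eta_h\,e^u = u(\xi^\eta-\epsilon^\eta) + \eta_h(\xi^u-\epsilon^u),
\]
so that the integrand of $\mathcal{H}_j$ splits into pieces of the form $(\text{smooth or bounded weight})\cdot\xi^\star\cdot\xi^\eta_x$ plus projection-error terms, where $\xi^\star$ stands for either $\xi^u$ or $\xi^\eta$. To legitimately use $\eta_h$ and $u_h$ as $L^\infty$ weights, I appeal to the a priori assumption $\norm{u-u_h}+\norm{\eta-\eta_h}\le h$ together with the inverse inequality \eqref{inverse} and the $L^\infty$ part of \eqref{interpolation}, which together give $\norm{u_h}_\infty+\norm{\eta_h}_\infty\le C$.

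After summing over $j$, any generic $(\text{bounded})\cdot\xi^\star\cdot\xi^\eta_x$ volume term is dispatched by Cauchy--Schwarz together with $\norm{\xi^\eta_x}\le C\norm{\xi^w}$ from Lemma \ref{xiaol}, producing a bound of the form $C(\norm{\xi^\star}^2+\norm{\xi^w}^2)+Ch^{2k+2}$ once Young absorbs the $\norm{\epsilon^\star}\le Ch^{k+1}$ factors. The only slightly delicate volume piece is the self-quadratic $\int_{I_j} u\,\xi^\eta\xi^\eta_x\,dx$: integration by parts inside each cell yields $-\frac{1}{2}\int_{I_j} u_x(\xi^\eta)^2\,dx$ (absorbed into $C\norm{\xi^\eta}^2$) and a cell-interface contribution which, after summation, equals $-\sum_j u_{j+1/2}\{\xi^\eta\}_{j+1/2}[\xi^\eta]_{j+1/2}$; this is then controlled using $\norm{\xi^\eta}_{\Gamma_h}\le Ch^{-1/2}\norm{\xi^\eta}$ from \eqref{inverse} and $|[\xi^\eta]|\le Ch^{1/2}\norm{\xi^w}$ from Lemma \ref{xiaol}, giving $\le C(\norm{\xi^\eta}^2+\norm{\xi^w}^2)$.

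For the boundary contribution of $\mathcal{H}_j$, periodic reindexing reduces the two endpoint terms to the single sum $\sum_j(u+\eta u-\widehat{F}_1)_{j+1/2}[\xi^\eta]_{j+1/2}$. Using continuity of $u$ and $\eta u$ together with the definition of the Lax--Friedrichs flux, I will expand
\[
u+\eta u-\widehat{F}_1 = \{e^u\} + \{\eta\,e^u+u_h\,e^\eta\} + \frac{\alpha}{2}[\eta_h],
\]
and then substitute $e^u=\xi^u-\epsilon^u$, $e^\eta=\xi^\eta-\epsilon^\eta$, and $[\eta_h]=-[\xi^\eta]+[\epsilon^\eta]$ (since $[\eta]=0$ at each interface). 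The resulting $-|c|\,\frac{\alpha}{2}[\xi^\eta]^2$ term has favourable sign in an upper bound and is simply discarded; every other product $\{\xi^\star\}[\xi^\eta]$, $\{\epsilon^\star\}[\xi^\eta]$, $[\epsilon^\eta][\xi^\eta]$ is bounded by the same Cauchy--Schwarz plus Lemma \ref{xiaol} recipe as in the volume part.

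I expect the main obstacle to be keeping the nonlinear boundary pieces tidy: the term $\{u_h\,e^\eta\}[\xi^\eta]$ in particular requires the uniform $L^\infty$ bound on $u_h$ granted by the a priori hypothesis before it reduces to the generic $\{(\text{bounded})\cdot\xi^\eta\}[\xi^\eta]$ template handled above. Once $\mathcal{H}_j$ is estimated in this way, $\mathcal{C}_j$ follows from the same steps: the self-quadratic $f(u)-f(u_h)=\frac{1}{2}(u+u_h)\,e^u$ plays the role of $u\eta-u_h\eta_h$, the flux $\widehat{F}_2=\{\eta_h\}+\frac{1}{2}\{u_h^2\}-\frac{\alpha}{2}[u_h]$ replaces $\widehat{F}_1$, and the companion bound $\norm{\xi^u_x}+h^{-1/2}|[\xi^u]|\le C\norm{\xi^v}$ of Lemma \ref{xiaol} takes over from the one for $\xi^\eta$, yielding \eqref{eq_lemma2_2}.
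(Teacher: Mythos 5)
Your proposal is correct and follows essentially the same route as the paper's proof: split each of $\mathcal{H}_j$ and $\mathcal{C}_j$ into a volume sum and an interface sum, linearize the nonlinearity exactly, use the a priori assumption for the needed $L^\infty$ control, convert $\norm{\xi^\eta_x}$ and $h^{-1/2}|[\xi^\eta]|$ into $\norm{\xi^w}$ (and their $u$-counterparts into $\norm{\xi^v}$) via Lemma~\ref{xiaol}, and discard the sign-favourable $-\tfrac{\alpha}{2}[\xi^\eta]^2$ contribution of the Lax--Friedrichs penalty. The only cosmetic differences are that the paper uses the three-term identity $u\eta-u_h\eta_h=u\,e^\eta+\eta\,e^u-e^\eta e^u$ (controlling the quadratic remainder with the a priori boundedness of $u-u_h$) in place of your two-term $u\,e^\eta+\eta_h\,e^u$, and that it bounds the self-quadratic volume term directly by Cauchy--Schwarz with $\norm{\xi^\eta_x}\le C\norm{\xi^w}$ rather than integrating by parts.
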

\begin{proof}
Following the definition \eqref{def_hj} of $\mathcal{H}_j$, we have
  \[\sum^N_{j=1}\mathcal{H}_j=\int_I((\xi^u-\epsilon^u)+u\eta-u_h\eta_h)\xi_x^\eta dx
  +\sum^N_{j=1}\big((u+u\eta-\widehat{F}_1)[\xi^\eta]\big)_{j+\frac{1}{2}}=:\text{\RNum{1}}+\text{\RNum{2}},\]
which is separated into two terms to be treated differently.
We start with providing the bound of $\RNum{1}$. It can be observed that
  \[\int_I(\xi^u-\epsilon^u)\xi_x^\eta dx=\int_I\xi^u\xi_x^\eta dx\le\frac{1}{2}(\norm{\xi^u}^2+\norm{\xi_x^\eta}^2)
  \le C(\norm{\xi^u}^2+\norm{\xi^w}^2),\]
where the first equality follows from the projection property and the last inequality uses Lemma \ref{xiaol}.
Furthermore, the difference of the nonlinear term becomes
  \begin{eqnarray}\label{taylor}
    u\eta-u_h\eta_h=u(\xi^\eta-\epsilon^\eta)+\eta(\xi^u-\epsilon^u)-(\xi^\eta-\epsilon^\eta)(\xi^u-\epsilon^u).
  \end{eqnarray}
By applying Lemma \ref{xiaol} and the projection error \eqref{interpolation}, 
and utilizing the assumption that $u$ and $\eta$ are uniformly bounded, we have
  \begin{align*}
&    \int_I u(\xi^\eta-\epsilon^\eta)\xi_x^\eta dx
  \le C(\norm{\xi^\eta}^2+\norm{\epsilon^\eta}^2+\norm{\xi_x^\eta}^2)\le C(\norm{\xi^\eta}^2+\norm{\xi^w}^2)
  +Ch^{2k+2},  \\
&      \int_I \eta(\xi^u-\epsilon^u)\xi_x^\eta dx\le C(\norm{\xi^u}^2+\norm{\epsilon^u}^2+\norm{\xi_x^\eta}^2)\le C(\norm{\xi^u}^2+\norm{\xi^w}^2)+Ch^{2k+2},
  \end{align*} 
and, since $\xi^u-\epsilon^u=u-u_h$ is also uniformly bounded by a priori assumption, 
  \begin{equation*}
    \int_I (\xi^u-\epsilon^u)(\xi^\eta-\epsilon^\eta)\xi_x^\eta dx 
  \le C(\norm{\xi^u}^2+\norm{\xi^\eta}^2+\norm{\xi^w}^2)+Ch^{2k+2},
  \end{equation*}
which leads to the estimate 
\[\text{\RNum{1}}\le C(\norm{\xi^u}^2+\norm{\xi^\eta}^2+\norm{\xi^w}^2)+Ch^{2k+2}.\]

Next, we approximate the term $\text{\RNum{2}}$, which is separated into the sum of the following four terms
  \begin{eqnarray*}
      \begin{split}
        &\text{\RNum{2}}=\sum^N_{j=1}\Big((u+\eta u-\widehat{F}_1)[\xi^\eta]\Big)_{j+\frac{1}{2}}
      =\sum^N_{j=1}\Big((u+u\eta-\{u_h\}-\{\eta_h u_h\}+\frac{\alpha}{2}[\eta_h])
      [\xi^\eta]\Big)_{j+\frac{1}{2}}&\\
      &=\sum^N_{j=1}\Big((u-\{u_h\})[\xi^\eta]\Big)_{j+\frac{1}{2}}
      +\frac{1}{2}\sum^N_{j=1}\Big((\eta u-\eta_h^+u_h^+)[\xi^\eta]\Big)_{j+\frac{1}{2}}
      +\frac{1}{2} \sum^N_{j=1}\Big((\eta u-\eta_h^-u_h^-)[\xi^\eta]\Big)_{j+\frac{1}{2}}
      +\frac{\alpha}{2}\sum^N_{j=1}([\eta_h][\xi^\eta])_{j+\frac{1}{2}}&\\
      &=:\text{\RNum{2}}_1+\text{\RNum{2}}_2+\text{\RNum{2}}_3+\text{\RNum{2}}_4.
      \end{split}
    \end{eqnarray*}    
By Young's inequality and Lemma \ref{xiaol}, we have 
    \begin{eqnarray}\label{35}
      \begin{split}
        &\text{\RNum{2}}_1=\sum^N_{j=1}\Big((\{\xi^u\}-\{\epsilon^u\})[\xi^\eta]\Big)_{j+\frac{1}{2}}
        =\sum^N_{j=1}\Big(h^{\frac{1}{2}}(\{\xi^u-\epsilon^u\})h^{-\frac{1}{2}}
        [\xi^\eta]\Big)_{j+\frac{1}{2}}&\\
        &\le Ch\big(\norm{\xi^u}^2_{\Gamma_h}+\norm{\epsilon^u}^2_{\Gamma_h}\big)+C\sum^N_{j=1}h^{-1}[\xi^\eta]_{j+\frac{1}{2}}^2
       \le C\big(\norm{\xi^u}^2+ h^{2k+2}\big)+C\norm{\xi^w}^2.
      \end{split}
      \end{eqnarray}
Utilizing the decomposition (\ref{taylor}) and applying Young's inequality and Lemma \ref{xiaol} yield 
\begin{eqnarray}\label{36}
        \begin{split}
          &\text{\RNum{2}}_3=\frac{1}{2}\sum^N_{j=1}\Big(
          \big(u(\xi^{\eta,-}-\epsilon^{\eta,-})
          +\eta(\xi^{u,-}-\epsilon^{u,-})-(\xi^{\eta,-}-\epsilon^{\eta,-})(\xi^{u,-}-\epsilon^{u,-})\big)[\xi^\eta]\Big)
          _{j+\frac{1}{2}}&\\
          &\le 
          C\big(\norm{\xi^u}^2+\norm{\xi^\eta}^2+h^{2k+2}\big)+C\norm{\xi^w}^2.
        \end{split}
      \end{eqnarray}
Following the similar analysis, and noting that $(\epsilon^{\eta,+})_{j+\frac12}=(\epsilon^{u,+})_{j+\frac12}=0$ as a result of these Radau projections, 
we can obtain
\[\text{\RNum{2}}_2\le C(\norm{\xi^u}^2+\norm{\xi^\eta}^2)+C\norm{\xi^w}^2,\]
and furthermore, since $-[\eta_h]=[\eta-\eta_h]=[\xi^\eta-\epsilon^\eta]$,
\[\text{\RNum{2}}_4=\frac{\alpha}{2}\sum^N_{j=1}\Big([\epsilon^\eta-\xi^\eta][\xi^\eta]\Big)_{j+\frac12}
=\frac{\alpha}{2}\sum^N_{j=1}\Big([\epsilon^\eta][\xi^\eta]\Big)_{j+\frac12}-\frac{\alpha}{2}\sum^N_{j=1}[\xi^\eta]^2_{j+\frac12}\le
C(\norm{\xi^w}^2+h^{2k+2})+Ch\norm{\xi^w}^2.\] 
The combination of these results leads to
\[\sum^N_{j=1}\mathcal{H}_j=\text{\RNum{1}}+\text{\RNum{2}}
      \le C\big(\norm{\xi^u}^2+\norm{\xi^\eta}^2+\norm{\xi^w}^2\big)+Ch^{2k+2}.\]
Following the similar line of the proof, we can obtain the approximation of $\mathcal{C}_j$
    \[\sum^N_{j=1}\mathcal{C}_j  \le  C(\norm{\xi^u}^2+\norm{\xi^\eta}^2+\norm{\xi^v}^2)+Ch^{2k+2},\]
which finishes the proof.     
    \end{proof}
    
Combining the result of  Lemma \ref{lemma1} and Lemma \ref{lemma2} with \eqref{eqqq}, 
we obtain
\[\frac{\partial}{\partial t}\int_I |c|(\xi^{\eta})^2+|bc|(\xi^w)^2+|a|(\xi^{u})^2+|ad|(\xi^v)^2 dx
  \le C\Big(\norm{\xi^u}^2+\norm{\xi^\eta}^2+\norm{\xi^v}^2+\norm{\xi^w}^2\Big)+ Ch^{2k+1}. \]
Applying the Gronwall's inequality, combining with Lemma \ref{initial} on the optimal errors of the initial conditions, 
and the optimal projection error of $\norm{\epsilon^{u,\eta,v,w}}$, we have the error estimate \eqref{thm}, 
 which finishes the proof of Theorem \ref{thm1}.
\begin{remark}
As observed in \eqref{ldg_method}, two auxiliary variables ($\theta$ and $\zeta$) are introduced to approximate $w_x$, 
and as a result, the $\eta_{xxt}$ and $\eta_{xxx}$ terms are rewritten as $\theta_t$ and $\zeta_x$, respectively.
We use the alternating flux when evaluating $\theta_h$, and the upwind flux when evaluating $\zeta_h$.
The extra numerical dissipation introduced by this upwind flux is important in bounding the term $\mathcal{Q}_j$,
otherwise, the approximation will reduce to the order of $h^{2k}$.
For the same reason, the auxiliary variables ($p$ and $q$) are introduced to approximate $v_x$.
 \end{remark}

  \subsection{The case of $a=c\ge0,~~b,~d~>0$} 
\begin{theorem}\label{thm2}
  When $a=c\ge0,~~b,~d~>0$, let $u,\eta$ be the exact solutions to the system (\ref{1}) 
  which are sufficiently smooth and bounded. Let $\eta_h,u_h\in V^k_h$ be the numerical solutions of the 
  LDG scheme. 
  For small enough $h$, there holds the following error estimate  
  \begin{equation}\label{thm21}
    \norm{u-u_h}^2+\norm{\eta-\eta_h}^2+\norm{v-v_h}^2+\norm{w-w_h}^2\le Ch^{2k+1}.
  \end{equation}
In particular, if $a=c=0$, we have
 \begin{equation}\label{thm22}
    \norm{u-u_h}^2+\norm{\eta-\eta_h}^2+\norm{v-v_h}^2+\norm{w-w_h}^2\le Ch^{2k+2}.
  \end{equation}
  \end{theorem}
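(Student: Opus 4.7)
The plan is to adapt the framework established in the proof of Theorem \ref{thm1}. The error equations \eqref{s1}--\eqref{s8} are unchanged, but because the signs of $a$ and $c$ have flipped, the test functions used in Theorem \ref{thm1} must be negated so that the contributions to $\sum_j \mathcal{P}_j$ still produce the positive quadratic form $\frac{1}{2}\partial_t\int_I |c|(\xi^\eta)^2+|bc|(\xi^w)^2+|a|(\xi^u)^2+|ad|(\xi^v)^2\,dx$. Concretely, I would take $\rho=c\,\xi^\eta$, $\widetilde{\rho}=a\,\xi^u$ in \eqref{s1}--\eqref{s2}; $\phi=ad\,\xi_t^v-ac\,\xi^\zeta$ and $\widetilde{\phi}=bc\,\xi_t^w-ac\,\xi^q$ in \eqref{s3} and \eqref{s6}; $\psi=ad\,\xi^u$ and $\widetilde{\psi}=bc\,\xi^\eta$ after differentiating \eqref{s4} and \eqref{s7} in $t$; and $\varphi=ac\,\xi^w$, $\widetilde{\varphi}=ac\,\xi^v$ in \eqref{s5} and \eqref{s8}.

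After summation over $j$ and use of the projection properties, I expect an analogue of \eqref{eqqq} to hold, with versions of Lemma \ref{lemma1} and Lemma \ref{lemma2} going through essentially unchanged. The key point is that the sign change of $\sign(a)$ and $\sign(c)$ inside the fluxes $\widecheck{v}_h$, $\widecheck{w}_h$ is exactly compensated by the sign change in the test functions, so the coefficient of $[\xi^w]^2$ and $[\xi^v]^2$ appearing in $\mathcal{Q}_j$ retains the negative sign that permitted the absorption of the Young-type terms $\sum(\epsilon^{v,+}[\xi^w])_{j+1/2}$ and $\sum([\epsilon^w][\xi^w])_{j+1/2}$ in Lemma \ref{lemma1}. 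Invoking Gronwall's inequality together with Lemma \ref{initial} and the projection property \eqref{interpolation} then yields the bound \eqref{thm21}.

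For the special case $a=c=0$ (the coupled BBM system), one instead uses the simpler test functions $\rho=\xi^\eta$, $\widetilde{\rho}=\xi^u$, $\psi=d\xi^u$, $\widetilde{\psi}=b\xi^\eta$, and works only with \eqref{s1}--\eqref{s4}, \eqref{s6}, \eqref{s7}, since the $q,\zeta$ auxiliary variables no longer couple into the stability estimate. Crucially, every boundary term in $\mathcal{Q}_j$ carrying a factor of $ac$---in particular the two $\sum(\epsilon^{v,+}-\frac12[\epsilon^w]+\frac12[\xi^w])[\xi^w]$-type sums that were bounded only by $Ch^{2k+1}$ in Lemma \ref{lemma1}---vanishes identically. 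The remaining contributions in $\mathcal{Q}_j$ all admit $Ch^{2k+2}$ bounds via \eqref{interpolation} and Lemma \ref{xiaol}, so the estimate improves by one order to the optimal $Ch^{2k+2}$.

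The main obstacle I anticipate is the careful tracking of signs in the $[\xi^w]^2$ and $[\xi^v]^2$ absorption when $a=c>0$. The Young-type manipulation in Lemma \ref{lemma1} hinges on the combination of the sign of $ac$ as the outer coefficient in $\mathcal{Q}_j$ together with the sign of $\sign(c)$ inside the flux $\widecheck{v}_h$; a single mistaken sign flip would leave an uncontrollable positive multiple of $[\xi^w]^2$ on the right-hand side and destroy the estimate. A secondary technical point is to verify that no initial contribution of $\xi^p,\xi^\theta,\xi^q,\xi^\zeta$ enters $\sum_j \mathcal{P}_j$, since the initial error of $p_h$ is only $O(h^k)$ by Lemma \ref{initial}; a brief inspection confirms that only $\xi^u,\xi^\eta,\xi^v,\xi^w$ appear in $\sum_j \mathcal{P}_j$, whose initial errors are $O(h^{k+1})$ and therefore do not spoil either \eqref{thm21} or \eqref{thm22}.
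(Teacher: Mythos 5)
Your proposal is correct in substance and relies on the same machinery as the paper (the error equations \eqref{s1}--\eqref{s8}, the decomposition into $\mathcal{P}_j$, $\mathcal{Q}_j$, $\mathcal{H}_j$, $\mathcal{C}_j$, Lemmas \ref{lemma1} and \ref{lemma2}, and Gronwall), and your key observation --- that the flipped $\sign(a)$, $\sign(c)$ in the fluxes $\widecheck{v}_h$, $\widecheck{w}_h$ is compensated by the flipped sign of the test functions, so the $-\frac{a}{2}\sum[\xi^w]^2$ and $-\frac{a}{2}\sum[\xi^v]^2$ terms keep the sign needed to absorb the Young--inequality contributions --- is exactly the point the paper's proof turns on. The one genuine difference is your normalization of the test functions. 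You take $\rho=c\,\xi^\eta$, $\widetilde{\rho}=a\,\xi^u$, etc., which yields the energy $\frac12\partial_t\int_I |c|(\xi^\eta)^2+|bc|(\xi^w)^2+|a|(\xi^u)^2+|ad|(\xi^v)^2\,dx$; this form degenerates entirely at $a=c=0$, which is why you are forced to split off the BBM--BBM case and rerun the argument with unscaled test functions (and note that there you also need $\phi=d\xi^v_t$, $\widetilde{\phi}=b\xi^w_t$ in \eqref{s3} and \eqref{s6}, which your sketch omits, to generate the $d(\xi^v)^2$ and $b(\xi^w)^2$ energy terms). The paper instead takes $\rho=\xi^\eta$, $\widetilde{\rho}=\xi^u$, $\phi=d\xi_t^v-a\xi^\zeta$, $\widetilde{\phi}=b\xi_t^w-a\xi^q$, $\varphi=a\xi^w$, $\widetilde{\varphi}=a\xi^v$ from the start, producing the uniformly nondegenerate energy $\frac12\partial_t\int_I(\xi^\eta)^2+b(\xi^w)^2+(\xi^u)^2+d(\xi^v)^2\,dx$ and the single bound $C\big(h^{2k+2}+|a|h^{2k+1}\big)$, from which both \eqref{thm21} and \eqref{thm22} drop out at once; this is cleaner and makes transparent that the half-order loss is proportional to $|a|$. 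Both routes are valid, and your final remark that only $\xi^u,\xi^\eta,\xi^v,\xi^w$ (with $O(h^{k+1})$ initial errors) enter the energy is a correct and worthwhile check.
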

  \begin{proof}
The proof of this theorem is similar to that of Theorem \ref{thm1}. Below we will sketch the proof and mainly present the different steps.

Choose the test functions $\rho=\xi^\eta,~ \widetilde{\rho}=\xi^u$ in (\ref{s1}) and (\ref{s2}), 
$\phi=d\xi_t^v-a\xi^\zeta, ~\widetilde{\phi}=b\xi_t^w-a\xi^q$ in (\ref{s3}) and (\ref{s6}),
$\psi=d\xi^u, ~\widetilde{\psi}=b\xi^\eta$ after taking the time derivative of (\ref{s4}) and (\ref{s7}),
and $\varphi=a\xi^w, ~\widetilde{\varphi}=a\xi^v$ in (\ref{s5}) and (\ref{s8}), respectively.
Summing up all these equations and applying the property of the projection operator, we obtain 
\[\sum^N_{j=1}\mathcal{P}_j-\sum^N_{j=1}\mathcal{Q}_j=\sum^N_{j=1}\mathcal{H}_j+\sum^N_{j=1}\mathcal{C}_j,\]
where the nonlinear term $\mathcal{H}_j$ and $\mathcal{C}_j$ are defined in \eqref{def_hj} and \eqref{def_cj}, and 
\begin{eqnarray*}
  \begin{split}
  &\hskip3cm
  \sum^N_{j=1}\mathcal{P}_j=\frac12\frac{\partial}{\partial t}\int_{I} (\xi^\eta)^2+b(\xi^w)^2+(\xi^u)^2+d(\xi^v)^2 dx, \\
  &\sum^N_{j=1}\mathcal{Q}_j=\int_I \xi^\eta \epsilon_t^\eta+\xi^u \epsilon_t^u 
  dx+\sum^N_{j=1}\big((b\epsilon_t^{w,-}-a\epsilon^{q,-})[\xi^\eta]\big)_{j+\frac{1}{2}}
  +\sum^N_{j=1}a\big((\epsilon^{v,+}+\frac{1}{2}[\epsilon^w]-\frac{1}{2}[\xi^w])[\xi^w]\big)_{j+\frac{1}{2}}
  \\
  &+\sum^N_{j=1}\big((d\epsilon_t^{v,-}-a\epsilon^{\zeta,-})[\xi^u]\big)_{j+\frac{1}{2}}
  +\sum^N_{j=1}a\big((\epsilon^{w,-}+\frac{1}{2}[\epsilon^v]-\frac{1}{2}[\xi^v])[\xi^v]\big)_{j+\frac{1}{2}}
  =:\text{\RNum{1}}
  +\text{\RNum{2}}+\text{\RNum{3}}+\text{\RNum{4}}+\text{\RNum{5}}.
  \end{split}
  \end{eqnarray*}
Following the exactly same analysis as that in Lemma \ref{lemma1} yields
  \[\text{\RNum{1}}+\text{\RNum{2}}+\text{\RNum{4}}\le C\Big(\norm{\xi^\eta}^2+\norm{\xi^u}^2+\norm{\xi^w}^2+\norm{\xi^v}^2+h^{2k+2}\Big),\]
   then 
  \begin{eqnarray*}
    \text{\RNum{3}}+\RNum{5}=|a|\Bigg(\sum^N_{j=1}
    \big((\epsilon^{v,+}+\frac{1}{2}[\epsilon^w]-\frac{1}{2}[\xi^w])[\xi^w]\big)_{j+\frac{1}{2}}+
    \sum^N_{j=1}\big((\epsilon^{w,-}+\frac{1}{2}[\epsilon^v]-\frac{1}{2}[\xi^v])[\xi^v]\big)_{j+\frac{1}{2}}\Bigg)\le
    C |a|h^{2k+1}.
  \end{eqnarray*}
  Therefore, 
\begin{align*}
    \sum^N_{j=1}\mathcal{P}_j-\sum^N_{j=1}\mathcal{Q}_j 	 \ge 
    &\,\frac12\frac{\partial}{\partial t}\int_{I}(\xi^\eta)^2+b(\xi^w)^2+(\xi^u)^2+d(\xi^v)^2 dx\\
    &
    -C\Big(\norm{\xi^\eta}^2+\norm{\xi^u}^2+\norm{\xi^w}^2+\norm{\xi^v}^2+h^{2k+2}+|a|h^{2k+1}\Big).
\end{align*}
Combining this with the result in Lemma \ref{lemma2}, we have 
 \begin{eqnarray*}
     \frac{\partial}{\partial t}\int_{I}(\xi^\eta)^2+b(\xi^w)^2+(\xi^u)^2+d(\xi^v)^2 dx \le      
     C\Big(\norm{\xi^u}^2+\norm{\xi^w}^2+\norm{\xi^\eta}^2+\norm{\xi^v}^2\Big)
     +C\Big(h^{2k+2}+|a|h^{2k+1}\Big),
 \end{eqnarray*}
which leads to \eqref{thm21} when $a\neq 0$, and \eqref{thm22} when $a=0$.
  \end{proof}
\begin{remark}
We would like to point out that when $a=c=0$, this model reduces to the coupled BBM equations.
In \cite{JX}, LDG methods were presented for this model, based on slightly different reformulation of the original model. 
Both energy conserving and energy dissipative methods were studied in that paper, however, optimal error estimate analysis
was obtained only for the linearized model. In this theorem, we are able to provide the optimal error estimate for the fully nonlinear 
BBM-BBM equations.
\end{remark}  
    
\subsection{The case of either $b>0,~d=0$ or $b=0,~d>0$, and either $a,~c<0$ or $a=c\ge0$} 
Two theorems will be presented in this subsection, with the first one on the case of $d=0,~b>0$ and the other on the case of $b=0,~d>0$.  
  \begin{theorem}{\label{thm3}}
  When $b>0,~d=0$, either $a,~c<0$ or $a=c\ge0$, let $u,\eta$ be the exact solutions to the system (\ref{1}) which are sufficiently smooth and bounded. Let $\eta_h,u_h\in V^k_h$ be 
  the numerical solutions of the LDG scheme (\ref{scheme}). For small enough $h$, there holds the following error estimate
 \begin{equation}\label{eq_thm3}
    \norm{u-u_h}^2+\norm{\eta-\eta_h}^2+\norm{w-w_h}^2\le Ch^{2k+1}.
  \end{equation}
  \end{theorem}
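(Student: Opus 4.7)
The plan is to repeat the argument of Theorem \ref{thm1} (for the sub-case $a,c<0$) or Theorem \ref{thm2} (for $a=c\ge 0$) with every test function that was multiplied by $d$ simply deleted. Concretely, for $a,c<0$ I would take $\rho=-c\xi^\eta$, $\widetilde{\rho}=-a\xi^u$, $\phi=ac\xi^\zeta$ (removing the $-ad\xi_t^v$ piece), $\widetilde{\phi}=-bc\xi_t^w+ac\xi^q$, $\widetilde{\psi}=-bc\xi^\eta$ in the time-differentiated \eqref{s7}, $\varphi=-ac\xi^w$, $\widetilde{\varphi}=-ac\xi^v$, and drop the time-differentiated \eqref{s4} entirely. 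Summing over cells, the resulting energy takes the form $\sum_j\mathcal{P}_j=\tfrac12\partial_t\int_I |c|(\xi^\eta)^2+|bc|(\xi^w)^2+|a|(\xi^u)^2\,dx$, with no $|ad|\norm{\xi^v}^2$ contribution, consistent with the absence of $\norm{v-v_h}$ from \eqref{eq_thm3}. The $a=c\ge 0$ sub-case is handled by the analogous deletion from Theorem \ref{thm2}.

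All pieces of Lemma \ref{lemma1} that involved $d$ or the auxiliary projection errors $\epsilon_t^v,\epsilon^p$ came from the discarded $-ad\xi_t^v$ test function, so that proof specializes directly to $\sum_j\mathcal{Q}_j\le C(\norm{\xi^u}^2+\norm{\xi^\eta}^2+\norm{\xi^w}^2)+Ch^{2k+1}$, without any $\norm{\xi^v}^2$ term on the right.

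The main obstacle is $\sum_j|a|\mathcal{C}_j$: the bound of Lemma \ref{lemma2} brings in $\norm{\xi^v}^2$ via $\norm{\xi^u_x}\le C\norm{\xi^v}$ and $|[\xi^u]|\le Ch^{1/2}\norm{\xi^v}$ from Lemma \ref{xiaol}, and this quantity is no longer absorbable. My plan is to reprove that bound by transferring every derivative off $\xi^u$. In the volume term $\int((\xi^\eta-\epsilon^\eta)+f(u)-f(u_h))\xi^u_x\,dx$ I would use the orthogonality $\int_{I_j}\epsilon^\eta\xi^u_x\,dx=0$ (since $\xi^u_x|_{I_j}\in P^{k-1}(I_j)$ and $\mathcal{P}^+\eta\perp P^{k-1}(I_j)$), integrate $\int\xi^\eta\xi^u_x\,dx$ by parts to obtain $-\int\xi^\eta_x\xi^u\,dx$ plus cell-interface terms with Lemma \ref{xiaol} now applied to the passive variable $\norm{\xi^\eta_x}\le C\norm{\xi^w}$, and split the Burgers-type piece as $\tfrac12(u+u_h)(\xi^u-\epsilon^u)\xi^u_x=\tfrac14(u+u_h)((\xi^u)^2)_x-\tfrac12(u+u_h)\epsilon^u\xi^u_x$, integrating both parts by parts and using the a priori bound $\norm{u-u_h}\le h$ together with $\norm{\epsilon^u_x}\le Ch^k$. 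The cell-interface contributions produced by these integrations by parts combine with $\sum(\eta+f(u)-\widehat{F}_2)[\xi^u]$ inside $\mathcal{C}_j$; the Lax--Friedrichs penalty $-\tfrac{\alpha}{2}\sum[\xi^u]^2$ arising from $[\eta_h]$ and $[u_h]$ in $\widehat{F}_2$ then absorbs the residual $[\xi^u]^2$ pieces, leaving only a $Ch^{2k+1}$ contribution from $[\epsilon^u],[\epsilon^\eta]$. The outcome is $\sum_j|a|\mathcal{C}_j\le C(\norm{\xi^u}^2+\norm{\xi^\eta}^2+\norm{\xi^w}^2)+Ch^{2k+1}$, with $\norm{\xi^v}^2$ exchanged for $\norm{\xi^w}^2$; Lemma \ref{lemma2}'s bound on $\sum|c|\mathcal{H}_j$ is used unchanged.

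Putting everything together gives a Gronwall-type differential inequality for $\norm{\xi^u}^2+\norm{\xi^\eta}^2+\norm{\xi^w}^2$ with forcing $Ch^{2k+1}$; Gronwall, Lemma \ref{initial} on the optimal initial errors, and the projection bound $\norm{\epsilon^{u,\eta,w}}\le Ch^{k+1}$ then deliver \eqref{eq_thm3}. The rerouting of the nonlinear estimate so that the $\norm{\xi^v}^2$ produced by Lemma \ref{lemma2} becomes $\norm{\xi^w}^2$ plus jumps absorbable by the Lax--Friedrichs dissipation is the place where the $d=0$ case genuinely differs from Theorems \ref{thm1}--\ref{thm2}, and is the main technical work.
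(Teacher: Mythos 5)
Your overall architecture is the paper's: the same test functions with the $d$-multiplied ones deleted, the same energy $\tfrac12\partial_t\int_I |c|(\xi^\eta)^2+|bc|(\xi^w)^2+|a|(\xi^u)^2\,dx$, and the same key idea of re-deriving the bound on $\sum_j|a|\mathcal{C}_j$ by moving derivatives off $\xi^u$ onto $\xi^\eta$ (so Lemma \ref{xiaol} is invoked through $\norm{\xi^\eta_x}\le C\norm{\xi^w}$ rather than $\norm{\xi^u_x}\le C\norm{\xi^v}$) and absorbing the leftover $[\xi^u]^2$ interface terms into the Lax--Friedrichs dissipation. That is exactly the paper's proof for the sub-case $a,c<0$, and you correctly identified it as the place where $d=0$ genuinely differs.

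There are, however, two concrete gaps. First, your claim that Lemma \ref{lemma1} ``specializes directly'' to give $\sum_j\mathcal{Q}_j\le C(\norm{\xi^u}^2+\norm{\xi^\eta}^2+\norm{\xi^w}^2)+Ch^{2k+1}$ is false: the interface term $ac\sum_j\big(\epsilon^{\zeta,-}[\xi^u]\big)_{j+\frac12}$ in \eqref{eqn_qj} comes from the $c\zeta_x$ term in \eqref{s2} tested against $-a\xi^u$, not from any $d$-weighted test function, so it survives when $d=0$; and its bound in Lemma \ref{lemma1} goes through $h^{-\frac12}|[\xi^u]|\le C\norm{\xi^v}$, which is exactly the quantity your energy no longer controls. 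The paper isolates this term explicitly in \eqref{def_qj_2} and absorbs it at the very end into the $\tfrac14\sum_j\alpha(\widehat f;u_h)[\xi^u]^2$ dissipation together with the $\{\epsilon^\eta\}[\xi^u]$ term; your framework can accommodate the same fix, but as written the step is wrong. Second, your from-scratch treatment of the Burgers term is lossy: after integrating $-\tfrac12\int (u+u_h)\epsilon^u\xi^u_x\,dx$ by parts you are forced to estimate $\int(u+u_h)\epsilon^u_x\,\xi^u\,dx\le C\norm{\epsilon^u_x}\norm{\xi^u}\le Ch^{k}\norm{\xi^u}$, which after Young's inequality yields only $Ch^{2k}$, half an order short of \eqref{eq_thm3}. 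One must not differentiate $\epsilon^u$; the correct device is to freeze the coefficient at the cell center and use the orthogonality $\int_{I_j}\epsilon^u\xi^u_x\,dx=0$ (since $\xi^u_x\in P^{k-1}(I_j)$), which is precisely the content of Lemma \ref{yanx} (Lemmas 3.4--3.5 of \cite{YS}) that the paper cites instead of reproving. With those two repairs your argument coincides with the paper's.
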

Below we only present the proof for the case of $a,~c<0$, 
which follows the steps as that in Theorem \ref{thm1}. The other case of $a=c\ge0$ 
can be shown in a similar way, by combining the proof below with that of Theorem \ref{thm2}.  
\begin{proof}
   When $d=0, ~a,~c<0,~b>0$, the system (\ref{1}) reduces to
    \begin{equation}\label{1nn}
  \begin{cases}
\eta_t+u_x+(\eta u)_x+au_{xxx}-b\eta_{xxt}=0, &\\
u_t+\eta_x+\big(\frac{u^2}{2}\big)_x+c\eta_{xxx}=0.&
\end{cases}
\end{equation}
Follow the step as in the proof of Theorem \ref{thm1} to obtain
\begin{eqnarray}\label{eqqq1}   
   \sum^N_{j=1}\mathcal{P}_j-\sum^N_{j=1}\mathcal{Q}_j=\sum^N_{j=1}|c|\mathcal{H}_j+
  \sum^N_{j=1}|a|\mathcal{C}_j,
\end{eqnarray}
where $\mathcal{P}_j$, $\mathcal{Q}_j$, $\mathcal{H}_j$ and $\mathcal{C}_j$ have the exact same definitions, except with $d=0$.
From Lemma \ref{lemma1}, we have
\begin{align}\label{def_pj_2}
  &\sum^N_{j=1}\mathcal{P}_j=\frac12\frac{\partial}{\partial t} \int_I \Big( |c|(\xi^\eta)^2
  +|bc|(\xi^w)^2+|a|(\xi^u)^2\Big) dx,  \\ 
  &\sum^N_{j=1}\mathcal{Q}_j\le C\Big(\norm{\xi^u}^2+\norm{\xi^\eta}^2
  +\norm{\xi^w}^2+h^{2k+1}\Big)+ac\sum^N_{j=1}\Big(\epsilon^{\zeta,-}	 \label{def_qj_2}
  [\xi^u]\Big)_{j+\frac{1}{2}},
\end{align}
where the last term in $\sum^N_{j=1}\mathcal{Q}_j$ cannot be bounded as what is done in Lemma \ref{lemma1} due to $d=0$. 
We refer to \eqref{eq_lemma2_1} for the estimate of $\mathcal{H}_j$. 

The approximation of $\mathcal{C}_j$ needs to be handled differently,
since we can not allow $\norm{\xi^v}$, which does not appear in \eqref{def_pj_2}, to show up here. Recall that 
  \begin{eqnarray*}
          \sum^N_{j=1}|a|\mathcal{C}_j=\int_{I}|a|\big(\xi^\eta-\epsilon^\eta+\frac{u^2}{2}
          -\frac{u^2_h}{2}\big)\xi_x^udx+\sum^N_{j=1}|a|\Big(\big(\eta+\frac{u^2}{2}-\widehat{F}_2\big)[\xi^u]\Big)
          _{j+\frac{1}{2}},
  \end{eqnarray*}
which can be separated into the linear term and nonlinear term
  \begin{equation}\label{def_cj_2}
          \sum^N_{j=1}|a|\mathcal{C}_j=\int_{I}|a|\big(\xi^\eta-\epsilon^\eta\big)\xi_x^udx
          +\sum^N_{j=1}|a|\Big(\big(\eta-\{\eta_h\}\big)[\xi^u]\Big)_{j+\frac{1}{2}}
          +\int_{I}|a|\Big(\frac{u^2}{2}-\frac{u^2_h}{2}\Big)\xi_x^udx
          +\sum^N_{j=1}|a|\Big(\big(\frac{u^2}{2}-\widehat{f}\big)[\xi^u]\Big)_{j+\frac{1}{2}},
  \end{equation}
with $\widehat{f}=\widehat{F}_2-\{\eta_h\}=\frac{\{u_h^2\}}{2}-\frac{\alpha}{2}[u_h]$.
Applying the projection property and integration by parts, the linear part can be reformulated as
\begin{align}\label{firstterm}
  &\int_{I}|a|\big(\xi^\eta-\epsilon^\eta\big)\xi_x^udx+\sum^N_{j=1}|a|\Big(\big(\eta-\{\eta_h\}\big)[\xi^u]\Big)_{j+\frac{1}{2}} 
  =\int_{I}|a|\xi^\eta\xi_x^udx+\sum^N_{j=1}|a|\Big(\{\xi^\eta-\epsilon^\eta\}[\xi^u]\Big)_{j+\frac{1}{2}}\notag \\
  &\hskip2cm
  =-\int_{I}|a|\xi^\eta_x\xi^udx-\sum^N_{j=1}|a|\Big(\{\xi^\eta\}[\xi^u]+[\xi^\eta]\{\xi^u\}\Big)_{j+\frac{1}{2}}+\sum^N_{j=1}|a|
  \Big(\{\xi^\eta-\epsilon^\eta\}[\xi^u]\Big)_{j+\frac{1}{2}}\notag \\
  &\hskip2cm
  =-\int_{I}|a|\xi^\eta_x\xi^udx-\sum^N_{j=1}|a|\Big(\{\epsilon^\eta\}[\xi^u]+[\xi^\eta]\{\xi^u\}\Big)_{j+\frac{1}{2}}.
\end{align}
Utilizing Lemma \ref{xiaol}, the first term can be bounded as
$$
-\int_{I}|a|\xi^\eta_x\xi^udx \le |a| \norm{\xi^\eta_x} \norm{\xi^u} \le C \norm{\xi^w}^2+ \norm{\xi^u}^2,
$$
and the second term can be approximated by
\begin{align}\label{secondterm}
&-\sum^N_{j=1}|a|\Big(\{\epsilon^\eta\}[\xi^u]+[\xi^\eta]\{\xi^u\} \Big)_{j+\frac{1}{2}}
        =-\sum^N_{j=1}\Big(|a|h^{-\frac12}[\xi^\eta]h^\frac12\{\xi^u\} \Big)_{j+\frac{1}{2}}-
        \sum^N_{j=1}\Big(|a|\{\epsilon^\eta\}[\xi^u]\Big)_{j+\frac{1}{2}}  \\
       & \le Ch^{-1}\sum^N_{j=1}[\xi^\eta]^2_{j+\frac{1}{2}}+Ch\sum^N_{j=1}[\xi^u]^2_{j+\frac{1}{2}}-
        \sum^N_{j=1}\Big(|a|\{\epsilon^\eta\}[\xi^u]\Big)_{j+\frac{1}{2}} 
        \leq C\big(\norm{\xi^w}^2+\norm{\xi^u}^2\big)-\sum^N_{j=1}\Big(|a|\{\epsilon^\eta\} [\xi^u]\Big)_{j+\frac{1}{2}}.   \notag
\end{align}
To estimate the nonlinear terms in \eqref{def_cj_2}, we introduce the following lemma, which is exactly Lemma 3.4 and Lemma 3.5 in \cite{YS}, 
      and we will omit the proof here.
      \begin{lemma}\label{yanx}
       With $f(u)$ and $\widehat{f}$ defined above, there exits some positive number $\alpha(\widehat{f};u_h)$, such that 
        \[\int_I |a|\big(f(u)-f(u_h)\big)\xi_x^udx+
          \sum^N_{j=1}|a|\big((f(u)-\widehat{f})[\xi^u]\big)_{j+\frac{1}{2}}\le -\frac{1}{4}\sum^N_{j=1}\Big(\alpha(\widehat{f};u_h)
      [\xi^u]^2\Big)_{j+\frac{1}{2}}+C\big(\norm{\xi^u}^2+h^{2k+1}\big).\]
      \end{lemma}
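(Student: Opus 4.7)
The plan is to follow the strategy introduced by Xu and Shu in \cite{YS} for the convective term of the KdV equation, since the algebraic structure of the present expression is identical up to the factor $|a|$ and the specific form of $\widehat{f}$. The goal is to extract a negative-definite interface contribution from the numerical viscosity of the Lax-Friedrichs flux and to show that every other piece can be absorbed into $C(\norm{\xi^u}^2 + h^{2k+1})$.

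The first step is to rewrite the nonlinearity in two complementary ways. In the volume integral I would use $f(u)-f(u_h)=\tfrac{1}{2}(u+u_h)(u-u_h)$ together with $u-u_h=\xi^u-\epsilon^u$. On the cell faces I would exploit the algebraic identities $\{u_h^2\}=\{u_h\}^2+\tfrac{1}{4}[u_h]^2$ and $[u_h]=[\epsilon^u]-[\xi^u]$ (the latter because $u$ is continuous across cells) to expand
\[
f(u)-\widehat{f}\;=\;\tfrac{u+\{u_h\}}{2}\,\{\xi^u-\epsilon^u\}\;-\;\tfrac{1}{8}[u_h]^2\;+\;\tfrac{\alpha}{2}\bigl([\epsilon^u]-[\xi^u]\bigr).
\]
The term $-\tfrac{\alpha}{2}[\xi^u]^2$ at each interface is precisely the dissipation we want. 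Performing a cellwise integration by parts on $\int_{I_j}\tfrac{u+u_h}{2}\,\xi^u\,\xi^u_x\,dx$ yields a boundary contribution plus a lower-order volume term involving $\bar u_x (\xi^u)^2$; the boundary pieces combine with the $\{\xi^u\}[\xi^u]$ part of the face expansion, and after a careful cancellation assemble into the stated $-\tfrac{1}{4}\sum_j \alpha(\widehat{f};u_h)[\xi^u]^2_{j+\frac12}$. The volume pieces containing $\epsilon^u$ are handled using the Radau projection identity $\int_{I_j}\epsilon^u v\,dx=0$ for $v\in P^{k-1}(I_j)$, which removes the top-order contribution and leaves only a boundary remainder controlled by \eqref{interpolation}. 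The cross face terms $\{\epsilon^u\}[\xi^u]$ are bounded by Young's inequality together with the inverse inequality \eqref{inverse}, converting $|[\xi^u]|$ into $\norm{\xi^u}$; importantly, one cannot afford to invoke Lemma \ref{xiaol} here, since that would produce $\norm{\xi^v}$, which is unavailable on the left-hand side when $d=0$.

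The main obstacle will be showing that the extracted numerical-viscosity coefficient $\alpha(\widehat{f};u_h)$ is genuinely strictly positive. This requires the Lax-Friedrichs parameter $\alpha=\max(|u|+\sqrt{|1+\eta|})$ to dominate $|f'(u)|=|u|$ on the range of $u_h^\pm$, which is guaranteed by the a priori bound $\norm{u-u_h}_\infty\le h$. A secondary difficulty is the cubic remainder $[u_h]^2[\xi^u]$ that arises from the $-\tfrac{1}{8}[u_h]^2$ piece: using $[u_h]^2\le C([\xi^u]^2+[\epsilon^u]^2)$ and the $L^\infty$ a priori bound gives an extra small factor that, thanks to the coefficient $1/4$ rather than $1/2$ on the dissipation, leaves enough headroom for absorption and yields the stated estimate.
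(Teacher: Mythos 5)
Your plan is essentially the paper's own route: the paper does not prove this lemma at all but states that it ``is exactly Lemma 3.4 and Lemma 3.5 in \cite{YS}'' and omits the proof, and your sketch is a faithful reconstruction of that Xu--Shu argument (second-order splitting of $f(u)-f(u_h)$, the identity $\{u_h^2\}=\{u_h\}^2+\tfrac14[u_h]^2$, extraction of the $-\tfrac{\alpha}{2}[\xi^u]^2$ dissipation, and cellwise integration by parts on the quadratic volume term). One small correction: bounding $\sum_j\{\epsilon^u\}[\xi^u]$ by Young plus the inverse inequality \eqref{inverse} turns $|[\xi^u]|$ into $h^{-1/2}\norm{\xi^u}$ and only yields $Ch^{2k}$, half an order short; that term should instead be absorbed into the retained $-\tfrac14\sum_j\alpha(\widehat{f};u_h)[\xi^u]^2$ dissipation (the same headroom you already invoke for the cubic remainder), which gives the stated $Ch^{2k+1}$.
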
 
Combining all these estimates together, we have the following result
      \[ \sum^N_{j=1}|a|\mathcal{C}_j
      \le C \big(\norm{\xi^u}^2+\norm{\xi^w}^2+h^{2k+1}\big) 
      -\frac{1}{4}\sum^N_{j=1}\Big(\alpha(\widehat{f};u_h)[\xi^u]^2\Big)_{j+\frac{1}{2}}
      -\sum^N_{j=1}\Big(|a|\{\epsilon^\eta\} [\xi^u]\Big)_{j+\frac{1}{2}}.\]
Recall that 
        \[\sum^N_{j=1}\mathcal{P}_j-\sum^N_{j=1}\mathcal{Q}_j=\sum^N_{j=1}|c|\mathcal{H}_j+\sum^N_{j=1}|a|\mathcal{C}_j,\]
we have
\begin{align*}
     & \frac12\frac{\partial}{\partial t}\int_I \big(|c|(\xi^\eta)^2+|bc|(\xi^w)^2+|a|(\xi^u)^2\big)dx+\frac{1}{4}
     \sum^N_{j=1}\Big(\alpha(\widehat{f};u_h) [\xi^u]^2\Big)_{j+\frac{1}{2}} \\
     &\qquad
     \le C\big(\norm{\xi^u}^2+\norm{\xi^\eta}^2+\norm{\xi^w}^2+h^{2k+1}\big)   
          +ac\sum^N_{j=1}\Big(\epsilon^{\zeta,-}[\xi^u]\Big)_{j+\frac{1}{2}}
    	 -|a|\sum^N_{j=1}\Big(\{\epsilon^\eta\}[\xi^u]\Big)_{j+\frac{1}{2}},
\end{align*}
which leads to
\begin{align*}
 \frac{\partial}{\partial t}\int_I \big(|c|(\xi^\eta)^2+|a|(\xi^u)^2+|bc|(\xi^w)^2\big)dx\le C\big(\norm{\xi^\eta}^2+\norm{\xi^u}^2+\norm{\xi^w}^2\big)+ Ch^{2k+1},
\end{align*}
by applying the Young's inequality and the projection error estimate \eqref{interpolation}. 
Then the estimate \eqref{eq_thm3} follows from this, the Gronwall's inequality, Lemma \eqref{initial} on optimal initial conditions, and the optimal projection error \eqref{interpolation}.
\end{proof} 

\begin{theorem}{\label{thm3.5}}
  When  $b=0,~d>0$, either $a,~c<0$ or $a=c\ge0$, let $u,\eta$ be the exact solutions to the system (\ref{1}) which 
  are sufficiently smooth and bounded. Let $\eta_h,u_h\in V^k_h$ be 
  the numerical solutions of the LDG scheme (\ref{scheme}). For small enough $h$, there holds the following error estimate
 \begin{equation}\label{eq_thm3.5}
    \norm{u-u_h}^2+\norm{\eta-\eta_h}^2+\norm{v-v_h}^2\le Ch^{2k+1}.
  \end{equation}
  \end{theorem}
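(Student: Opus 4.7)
With $b=0$ and $d>0$, the term $\eta_{xxt}$ disappears from the first equation of \eqref{1} while $u_{xxt}$ is retained, so this case is the mirror image of Theorem \ref{thm3} under the role swap $(u,\eta)\leftrightarrow(\eta,u)$, $(v,w)\leftrightarrow(w,v)$. My plan is therefore to repeat the energy identity of Theorem \ref{thm1} with the test function choices adjusted so that every contribution involving $\xi^w_t$ is dropped while all $\xi^v_t$ contributions are kept, and then to rework the bound on $\mathcal{H}_j$ in exactly the way that Theorem \ref{thm3} reworks $\mathcal{C}_j$.

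Concretely, I would take $\rho=-c\xi^\eta$, $\widetilde\rho=-a\xi^u$, $\phi=-ad\xi^v_t+ac\xi^\zeta$, $\widetilde\phi=ac\xi^q$, $\psi=-ad\xi^u$ in the time derivative of \eqref{s4}, $\varphi=-ac\xi^w$, and $\widetilde\varphi=-ac\xi^v$, omitting \eqref{s7} from the combination. Summing and invoking the projection identities then gives the identity \eqref{eqqq1} but now with
\begin{equation*}
\sum_{j=1}^N \mathcal{P}_j = \frac12\frac{\partial}{\partial t}\int_I \Big(|c|(\xi^\eta)^2 + |a|(\xi^u)^2 + |ad|(\xi^v)^2\Big)\, dx,
\end{equation*}
so only $\norm{\xi^u}$, $\norm{\xi^\eta}$ and $\norm{\xi^v}$ are controlled by the evolved energy. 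The bound on $\mathcal{C}_j$ from Lemma \ref{lemma2} is still directly usable because it depends only on these three norms, but a residual flux term $ac\sum_j(\epsilon^{q,-}[\xi^\eta])_{j+\frac12}$ in $\sum_j\mathcal{Q}_j$ can no longer be absorbed through Lemma \ref{xiaol} since $\norm{\xi^w}$ has dropped out.

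The main obstacle is thus to bound $\sum_j|c|\mathcal{H}_j$ together with this residual jump without invoking $\norm{\xi^w}$. I would mirror \eqref{def_cj_2}--\eqref{secondterm}: split $\mathcal{H}_j$ into a linear part coming from $u_x$ and a nonlinear part coming from $(\eta u)_x$, integrate by parts on the linear part to move the derivative off $\xi^\eta$ onto $\xi^u$, and then control the resulting volume term by $\norm{\xi^u_x}\le C\norm{\xi^v}$ from Lemma \ref{xiaol} and the new jump contributions by Young's inequality together with the inverse estimate \eqref{inverse}. For the nonlinear part, the Lax--Friedrichs dissipation $-\frac{\alpha}{2}[\eta_h]$ inside $\widehat F_1$ furnishes a bilinear analogue of Lemma \ref{yanx} that produces a boundary dissipation of the form $-\frac14\sum_j(\alpha\,[\xi^\eta]^2)_{j+\frac12}$. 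That dissipation is precisely what is needed to absorb both the residual $ac\sum_j(\epsilon^{q,-}[\xi^\eta])_{j+\frac12}$ and the $\{\epsilon^u\}[\xi^\eta]$ term generated by the integration by parts, via Young's inequality and \eqref{interpolation}.

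Combining these bounds with Lemma \ref{lemma1}-type estimates on the remaining pieces of $\sum_j\mathcal{Q}_j$ produces
\begin{equation*}
\frac{\partial}{\partial t}\int_I\Big(|c|(\xi^\eta)^2+|a|(\xi^u)^2+|ad|(\xi^v)^2\Big)\, dx \le C\big(\norm{\xi^u}^2+\norm{\xi^\eta}^2+\norm{\xi^v}^2\big) + Ch^{2k+1},
\end{equation*}
and \eqref{eq_thm3.5} then follows by Gronwall together with Lemma \ref{initial} and the projection estimate \eqref{interpolation} applied to $\epsilon^{u,\eta,v}$. The other case $a=c\ge 0$ is handled by the same argument combined with the sign adjustment of Theorem \ref{thm2}, which introduces only an additional $|a|h^{2k+1}$ error source of the stated order.
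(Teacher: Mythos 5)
Your proposal follows the same route as the paper: the same $b=0$ specialization of the Theorem~\ref{thm1} energy identity, the same identification of the two obstructions (the residual $ac\sum_j(\epsilon^{q,-}[\xi^\eta])_{j+\frac12}$ in $\mathcal{Q}_j$ and the fact that $\mathcal{H}_j$ can no longer lean on $\norm{\xi^w}$), and the same cure, namely extracting the jump dissipation $-\frac{\alpha}{4}\sum_j[\xi^\eta]^2_{j+\frac12}$ from the Lax--Friedrichs flux and absorbing the leftover boundary terms into it. The one place where you assert rather than argue is the claim that the Lax--Friedrichs dissipation ``furnishes a bilinear analogue of Lemma~\ref{yanx}.'' No such lemma is available off the shelf: Lemma~\ref{yanx} relies on the scalar structure $f(u)-f(u_h)\approx f'(u)\xi^u$ and the cancellation $\int f'(u)\xi^u\xi^u_x\,dx=\frac12\int f'(u)\big((\xi^u)^2\big)_x dx$, whereas here the volume term is $\int(u\eta-u_h\eta_h)\xi^\eta_x\,dx$, whose cross piece $\int\eta_h(\xi^u-\epsilon^u)\xi^\eta_x\,dx$ pairs the error in $u$ against the derivative of the error in $\eta$ and admits no such quadratic cancellation. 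The paper supplies the missing argument by the decomposition $\Pi_1+\Pi_2+\Pi_3$: it freezes $u$ and $\eta_h$ at the cell centers, uses $u(x)-u(x_j)=O(h)$ and $\eta_h(x)-\eta_h(x_j)=O(h)$ to offset the $h^{-1}$ from the inverse estimate on $\xi^\eta_x$, integrates the frozen-coefficient pieces by parts so that only $\xi^u_x$ (controlled by $\norm{\xi^v}$ via Lemma~\ref{xiaol}) and $[\xi^\eta]$ (controlled by the extracted dissipation, with constants $\frac{\alpha}{16}$, $\frac{\alpha}{8}$ budgeted against $\frac{7\alpha}{16}$) appear. This frozen-coefficient step is the genuine technical content of the theorem beyond Theorems~\ref{thm1} and~\ref{thm3}, so your outline is correct in strategy but incomplete precisely there.
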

  
\begin{proof}
For the same reason, we only present the proof for the case of $a,~c<0$. 
Follow the steps in Theorem \ref{thm1}, we can derive
\begin{eqnarray*}
   \sum^N_{j=1}\mathcal{P}_j-\sum^N_{j=1}\mathcal{Q}_j=\sum^N_{j=1}|c|\mathcal{H}_j+
  \sum^N_{j=1}|a|\mathcal{C}_j,
\end{eqnarray*}
with
\begin{align}\label{def_pj_3}
  &\sum^N_{j=1}\mathcal{P}_j=\frac12\frac{\partial}{\partial t} \int_I \Big( |c|(\xi^\eta)^2
  +|ad|(\xi^v)^2+|a|(\xi^u)^2\Big) dx,  \\ 
  &\sum^N_{j=1}\mathcal{Q}_j\le C\Big(\norm{\xi^u}^2+\norm{\xi^\eta}^2
  +\norm{\xi^v}^2+h^{2k+1}\Big)+ac\sum^N_{j=1}\Big(\epsilon^{q,-} [\xi^\eta]\Big)_{j+\frac{1}{2}},
\end{align}
following the result in Lemma \ref{lemma1}, where the last term in $\sum^N_{j=1}\mathcal{Q}_j$ is different 
from that in the proof of Theorem \ref{thm3} due to $b=0$.  
Another difference in the proof is that the term $\mathcal{C}_j$ can now be bounded as in \eqref{eq_lemma2_2}, and 
the approximation of $\mathcal{H}_j$ needs to be handled differently, since we can not allow 
$\norm{\xi^w}$ that does not appear in \eqref{def_pj_3} to show up here.
Recall that 
\begin{eqnarray}\label{def_cj_3}
   \sum^N_{j=1}\mathcal{H}_j&=&\int_I(\xi^u-\epsilon^u)\xi^\eta_xdx+\sum^N_{j=1}\Big(\big(u-\{u_h\}\big)[\xi^\eta]
   \Big)_{j+\frac{1}{2}} \notag 
   \\&+&\int_I(u\eta-u_h\eta_h)\xi^\eta_xdx+  
   \sum^N_{j=1}\Big(\big(u\eta-\{u_h\eta_h\}+\frac{\alpha}{2}[\eta_h]\big)[\xi^\eta]\Big)_{j+\frac{1}{2}}.
\end{eqnarray}
For the linear term, we can follow the same analysis as in (\ref{firstterm}) and (\ref{secondterm}) to derive
\[\int_I(\xi^u-\epsilon^u)\xi^\eta_xdx+\sum^N_{j=1}\Big(\big(u-\{u_h\}\big)[\xi^\eta]
   \Big)_{j+\frac{1}{2}}\le C(\norm{\xi^\eta}^2+\norm{\xi^v}^2)+Ch^{2k+1}
   -\sum^N_{j=1}\Big(\{\epsilon^u\}[\xi^\eta]\Big)_{j+\frac{1}{2}}.\]
For the nonlinear term, we decompose it as the sum of $\Pi_1,~\Pi_2,~\Pi_3$, outlined below.
\begin{align}\label{def_cj_4}
&
 \int_I(u\eta-u_h\eta_h)\xi^\eta_xdx+ \sum^N_{j=1}\Big(\big(u\eta-\{u_h\eta_h\}+\frac{\alpha}{2}[\eta_h]\big)[\xi^\eta]\Big)_{j+\frac{1}{2}} 	\notag \\ 
&\hskip15mm     
      = \int_Iu(\xi^\eta-\epsilon^\eta)\xi^\eta_xdx+\sum^N_{j=1}\Big(u(\{\xi^\eta\}-\{\epsilon^\eta\}) [\xi^\eta]\Big)_{j+\frac12} 	\notag \\ 
&\hskip18mm     
    +\int_I \eta_h(\xi^u-\epsilon^u)\xi^\eta_xdx+\sum^N_{j=1}\Big(\{\eta_h(\xi^u-\epsilon^u)\}[\xi^\eta]\Big)_{j+\frac12}	\notag \\ 
&\hskip18mm     
   + \sum^N_{j=1}\Big(\frac{\alpha}{2}[\eta_h][\xi^\eta]\Big)_{j+\frac{1}{2}}  \quad =: \Pi_1+\Pi_2+\Pi_3.
\end{align}
One can easily observe that 
   \begin{eqnarray*}
        \Pi_3   =\sum^N_{j=1}\Big(\frac{\alpha}{2}[\epsilon^\eta-\xi^\eta][\xi^\eta]\Big)_{j+\frac{1}{2}}
      =\sum^N_{j=1}\Big(\frac{\alpha}{2}[\epsilon^\eta][\xi^\eta]\Big)_{j+\frac{1}{2}}
         -\sum^N_{j=1}\Big(\frac{\alpha}{2}[\xi^\eta]^2\Big)_{j+\frac{1}{2}}
       \le	Ch^{2k+1}-\frac{7\alpha}{16}\sum^N_{j=1}\Big([\xi^\eta]^2\Big)_{j+\frac12}.
   \end{eqnarray*}   
To bound $\Pi_2$, we rewrite it into the following form by adding and subtracting the same term
\begin{eqnarray*}
  \Pi_2   &=&\sum^N_{j=1} \int_{I_j} \eta_h(x_j)(\xi^u-\epsilon^u)\xi^\eta_xdx+ 
  \sum^N_{j=1}\eta_h(x_j)\Big(\{\xi^u-\epsilon^u\}[\xi^\eta]\Big)_{j+\frac12}\\
  &+&\sum^N_{j=1} \int_{I_j} \big(\eta_h-\eta_h(x_j)\big)(\xi^u-\epsilon^u)\xi^\eta_xdx+ 
  \sum^N_{j=1}\Big(\big\{\big(\eta_h-\eta_h(x_j)\big)(\xi^u-\epsilon^u)\big\}
  [\xi^\eta]\Big)_{j+\frac12}.
\end{eqnarray*}
Applying integration by parts, Young's inequality, Lemma \ref{xiaol}, the definition of the projection and its optimal error estimate (\ref{interpolation}), we obtain
\begin{align*}
 & \sum^N_{j=1} \int_{I_j} \eta_h(x_j)(\xi^u-\epsilon^u)\xi^\eta_xdx+ 
  \sum^N_{j=1}\eta_h(x_j)\Big(\{\xi^u-\epsilon^u\}[\xi^\eta]\Big)_{j+\frac12}\\
  &\hskip1cm
  =\sum^N_{j=1} \int_{I_j} \eta_h(x_j)\xi^u \xi^\eta_xdx +\sum^N_{j=1}\eta_h(x_j)\Big(\{\xi^u\}[\xi^\eta]\Big)_{j+\frac12}
  -\sum^N_{j=1}\eta_h(x_j)\Big(\{\epsilon^u\}[\xi^\eta]\Big)_{j+\frac12}\\
  &\hskip1cm
  =-\sum^N_{j=1}\int_{I_j}\eta_h(x_j)\xi^\eta\xi^u_xdx-\sum^N_{j=1}\eta_h(x_j)
  \Big([\xi^u]\{\xi^\eta\}\Big)_{j+\frac12}-\sum^N_{j=1}\eta_h(x_j)\Big(\{\epsilon^u\}[\xi^\eta]\Big)_{j+\frac12}  \\
  &\hskip1cm
  \le C(\norm{\xi^\eta}^2+\norm{\xi^v}^2)+Ch^{2k+1}+\frac{\alpha}{16}\sum^N_{j=1}\Big([\xi^\eta]^2\Big)_{j+\frac12}.
\end{align*}
Noting the fact that $\eta_h(x)-\eta_h(x_j)=O(h)$ for any $x\in I_j$, we can apply 
Young's inequality, projection property (\ref{interpolation}), and the inverse inequality \eqref{inverse} to obtain
  \begin{align*}
    &\Big|\sum^N_{j=1} \int_{I_j} \big(\eta_h-\eta_h(x_j)\big)(\xi^u-\epsilon^u)\xi^\eta_xdx+ 
  \sum^N_{j=1}\Big(\big\{\big(\eta_h-\eta_h(x_j)\big)(\xi^u-\epsilon^u)\big\}
  [\xi^\eta]\Big)_{j+\frac12}\Big|\\
  &\le \sum^N_{j=1}\int_{I_j}Ch|(\xi^u-\epsilon^u)\xi^\eta_x|dx+\sum^N_{j=1}
  \Big|Ch\{\xi^u-\epsilon^u\}[\xi^\eta]\Big|_{j+\frac12} \le C(\norm{\xi^u}^2+\norm{\xi^\eta}^2)+Ch^{2k+2}.
  \end{align*}
Therefore we obtain the following estimate
  \[\Pi_2\le C(\norm{\xi^\eta}^2+\norm{\xi^v}^2+\norm{\xi^u}^2)+Ch^{2k+1}+\frac{\alpha}{16}\sum^N_{j=1}\Big([\xi^\eta]^2\Big)_{j+\frac12}.\]

  Next we will estimate the term $\Pi_1$. From integration by parts and the property of projection, we have
  \begin{eqnarray}\label{ibp}
    \sum^N_{j=1}\int_{I_j}u(x_j)(\xi^\eta-\epsilon^\eta)\xi^\eta_xdx+\sum^N_{j=1}
    \Big(u(x_j)\{\xi^\eta\}[\xi^\eta]\Big)_{j+\frac12}=0,
  \end{eqnarray}
therefore, we obtain
  \begin{eqnarray*}
    \Pi_1  &=&\sum^N_{j=1}\int_{I_j}u(\xi^\eta-\epsilon^\eta)\xi^\eta_xdx+\sum^N_{j=1}
    \Big(u(\{\xi^\eta\}-\{\epsilon^\eta\})
    [\xi^\eta]\Big)_{j+\frac12}\\
    &-&\sum^N_{j=1}\int_{I_j}u(x_j)(\xi^\eta-\epsilon^\eta)\xi^\eta_xdx-\sum^N_{j=1}
    \Big(u(x_j)\{\xi^\eta\}[\xi^\eta]\Big)_{j+\frac12}\\
    &=&\sum^N_{j=1}\int_{I_j}\big(u-u(x_j)\big)(\xi^\eta-\epsilon^\eta)\xi^\eta_xdx
    +\sum^N_{j=1}\Big(\big(u-u(x_j)\big)\{\xi^\eta\}[\xi^\eta]\Big)_{j+\frac12}
    -\sum^N_{j=1}\Big(u\{\epsilon^\eta\}[\xi^\eta]\Big).
  \end{eqnarray*}
  Since $u(x)-u(x_j)=O(h)$ for any $x\in I_j$, applying the Young's inequality, inverse inequality (\ref{inverse}), and projection property (\ref{interpolation}) yields
  \begin{align*}
    \sum^N_{j=1}\int_{I_j}(u-u(x_j))(\xi^\eta-\epsilon^\eta)\xi^\eta_xdx
    &\le \sum^N_{j=1}\int_{I_j}C|(\xi^\eta-\epsilon^\eta)h\xi^\eta_x|dx
    \le C(\norm{\xi^\eta}^2+h^{2k+2}), \\
    \sum^N_{j=1}\Big(\big(u-u(x_j)\big)\{\xi^\eta\}[\xi^\eta]\Big)_{j+\frac12}
    & \le \sum^N_{j=1}\Big(Ch|\{\xi^\eta\}[\xi^\eta]|\Big)_{j+\frac12}
    \le C\norm{\xi^\eta}^2, \\	
    -\sum^N_{j=1}\Big(u\{\epsilon^\eta\}[\xi^\eta]\Big)
    & \le  Ch^{2k+1}+\frac{\alpha}{8}\sum^N_{j=1}\Big([\xi^\eta]^2\Big)_{j+\frac12}.
  \end{align*}
  Therefore, we obtain the estimate of $\Pi_1$ as
  \[\Pi_1\le C(\norm{\xi^\eta}^2+h^{2k+1})+\frac{\alpha}{8}\sum^N_{j=1}\Big([\xi^\eta]^2\Big)_{j+\frac12}.\]
  and the estimate of the nonlinear term is given by
  \begin{eqnarray*}
    \int_I(u\eta-u_h\eta_h)\xi^\eta_xdx+  
   \sum^N_{j=1}\Big(\big(u\eta-\{u_h\eta_h\}+\frac{\alpha}{2}[\eta_h]\big)[\xi^\eta]^2\Big)_{j+\frac{1}{2}}
   =\Pi_1+\Pi_2+\Pi_3
   ~~~~~~~~~~~~~~~
   \\
   \le C((\norm{\xi^\eta}^2+\norm{\xi^v}^2+\norm{\xi^u}^2))+Ch^{2k+1}-\frac{\alpha}{4}\sum^N_{j=1}\Big([\xi^\eta]^2\Big)_{j+\frac12}
   .
  \end{eqnarray*}
  As a result, we have shown that
  \[\sum^N_{j=1}\mathcal{H}_j\le C(\norm{\xi^\eta}^2+\norm{\xi^v}^2+\norm{\xi^u}^2)+Ch^{2k+1}-\frac{\alpha}{4}\sum^N_{j=1}\Big([\xi^\eta]^2\Big)_{j+\frac12}
  -\sum^N_{j=1}\Big(\{\epsilon^u\}[\xi^\eta]\Big)_{j+\frac{1}{2}}.\]
  Recall the relation 
  \[\sum^N_{j=1}\mathcal{P}_j-\sum^N_{j=1}\mathcal{Q}_j=\sum^N_{j=1}|c|\mathcal{H}_j+\sum^N_{j=1}|a|\mathcal{C}_j,\]
  we have 
  \begin{align*}
    &\frac12\frac{\partial}{\partial t} \int_I \Big( |c|(\xi^\eta)^2
  +|ad|(\xi^v)^2+|a|(\xi^u)^2\Big) 
  dx+\frac{|c|\alpha}{4}\sum^N_{j=1}\Big([\xi^\eta]^2\Big)_{j+\frac12}\\
    &\qquad
  \le C(\norm{\xi^\eta}^2+\norm{\xi^v}^2+\norm{\xi^u}^2)+Ch^{2k+1}+
  ac\sum^N_{j=1}\Big(\epsilon^{q,-}	                               
  [\xi^\eta]\Big)_{j+\frac{1}{2}}-\sum^N_{j=1}\Big(|c|\{\epsilon^u\}[\xi^\eta]\Big)_{j+\frac{1}{2}},
  \end{align*}
  which leads to 
  \begin{eqnarray*}
    \frac{\partial}{\partial t} \int_I \Big( |c|(\xi^\eta)^2
  +|ad|(\xi^v)^2+|a|(\xi^u)^2\Big) 
  dx\le C(\norm{\xi^\eta}^2+\norm{\xi^v}^2+\norm{\xi^u}^2)+Ch^{2k+1}.
  \end{eqnarray*}
The estimate \eqref{eq_thm3.5} follows from the Gronwall's inequality, Lemma \eqref{initial}, and the optimal projection error \eqref{interpolation}.
\end{proof}

\subsection{The case of $b,~d>0$, $ac=0$ and $a,~c\leq 0$}
\begin{theorem}\label{thm4}
  When $b,~d>0$, $ac=0$ and $a,~c\leq 0$, let $u,\eta$ be the exact solutions to the system (\ref{1}) which are sufficiently smooth and bounded. Let $\eta_h,u_h\in V^k_h$ be the numerical solutions of the LDG scheme (\ref{scheme}). For small enough $h$, there holds the following error estimate
\begin{equation}\label{eq_thm4}
    \norm{u-u_h}^2+\norm{\eta-\eta_h}^2+\norm{v-v_h}^2+\norm{w-w_h}^2+\norm{p-p_h}^2\le Ch^{2k}.
  \end{equation}
\end{theorem}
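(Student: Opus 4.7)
Without loss of generality I consider $a=0$, $c\le 0$, $b,d>0$; the complementary case $c=0$, $a\le 0$ is entirely symmetric, following by exchanging the roles of the two equations and their auxiliary variables. The plan is to adapt the energy argument of Theorem \ref{thm1} to this degenerate situation. Since $a=0$, the weights $-a$, $-ad$, $-ac$ used in Theorem \ref{thm1} all collapse, so I keep the surviving non-degenerate tests $\rho=-c\xi^\eta$ in \eqref{s1}, $\widetilde{\phi}=-bc\xi_t^w$ in \eqref{s6}, and $\widetilde{\psi}=-bc\xi^\eta$ in the time-differentiated \eqref{s7}, and replace the degenerate ones by $\widetilde{\rho}=\xi^u$ in \eqref{s2}, $\phi=d\xi_t^v$ in \eqref{s3}, and $\psi=d\xi^u$ in the time-differentiated \eqref{s4}; the tests of \eqref{s5} and \eqref{s8} disappear because their natural scaling $-ac$ is zero. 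Summing cell by cell and using the DG integration-by-parts identity $\sum_j\mathcal{A}_j(f,g;f^+)=-\sum_j\mathcal{A}_j(g,f;g^-)$ (and its $\pm$-swapped analogue) to recombine the $\xi^u$--$\xi^v$ and $\xi^\eta$--$\xi^w$ cross couplings into total time derivatives, the accumulated bilinear quantity becomes
\[
\sum_j \mathcal{P}_j=\tfrac{1}{2}\tfrac{d}{dt}\int_I\!\Big(|c|(\xi^\eta)^2+|bc|(\xi^w)^2+(\xi^u)^2+d(\xi^v)^2\Big)\,dx .
\]

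The boundary residuals in $\sum_j \mathcal{Q}_j$ simplify because all $-ac$-scaled contributions vanish; the surviving $-c$ and $-bc$ terms are bounded exactly as in Lemma \ref{lemma1}. The nonlinear contribution $|c|\mathcal{H}_j$ is controlled verbatim by Lemma \ref{lemma2}. The genuinely new task is to bound $\mathcal{C}_j$ with unit coefficient rather than $|a|$, for which Lemma \ref{lemma2} is unavailable. I would borrow the technique from Theorem \ref{thm3.5}: split $\mathcal{C}_j$ into its linear and quadratic-in-$u$ parts, integrate the linear part by parts in the DG sense to trade $\xi^u_x$ for $\norm{\xi^v}$ via Lemma \ref{xiaol}, localize the coefficient $u$ by freezing $u(x)\leftarrow u(x_j)$ at $O(h)$ cost, and invoke the Xu--Shu lemma (Lemma \ref{yanx}) to absorb the remaining interface jumps into a non-negative $\tfrac14\sum_j\alpha(\widehat{f};u_h)[\xi^u]^2$ term on the left. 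Collecting these bounds produces the differential inequality
\[
\tfrac{d}{dt}\int_I\!\Big(|c|(\xi^\eta)^2+|bc|(\xi^w)^2+(\xi^u)^2+d(\xi^v)^2\Big)\,dx\le C\big(\norm{\xi^\eta}^2+\norm{\xi^u}^2+\norm{\xi^v}^2+\norm{\xi^w}^2\big)+Ch^{2k+1}.
\]

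To pick up the $\norm{p-p_h}$ term required by \eqref{eq_thm4}, I would run a parallel test generating $\tfrac{d}{dt}\norm{\xi^p}^2$: take the auxiliary test $\widetilde{\rho}=-d\xi^p$ in \eqref{s2}, which gives $\tfrac{d^2}{2}\tfrac{d}{dt}\norm{\xi^p}^2$ together with a residual $-d\int \xi^u_t\xi^p$; this residual is then rewritten using the time-differentiated \eqref{s3} tested with $\xi^p$, together with the $\mathcal{A}_j$ swap identity above, to re-express it as a coupling between $\norm{\xi^p}, \norm{\xi^v}$ and $\norm{\xi^u}$ controllable by Young's inequality. Adding this to the previous differential inequality and applying Gronwall propagates the initial bounds from Lemma \ref{initial}; the optimal bounds $\norm{\xi^{\eta,u,v,w}}^2(0)\le Ch^{2k+2}$ are outmatched only by the suboptimal bound $\norm{\xi^p}^2(0)\le Ch^{2k}$, and this single suboptimal input leaks through the Gronwall estimate to yield the stated rate $h^{2k}$.

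The main obstacle, anticipated by the plan above, is the $\xi^p$--$\xi^u_t$ coupling: in Theorem \ref{thm1} it was absorbed implicitly through the $|ad|\norm{\xi^v}^2$ contribution to $\sum_j\mathcal{P}_j$, but with $|ad|=0$ here the quantity $\norm{\xi^p}^2$ must be carried as an independent energy and the coupling handled cleanly without introducing fresh inverse-inequality losses. The Radau-projection-induced suboptimality of $p_h(\cdot,0)$ recorded in Lemma \ref{initial} is the irreducible source of the $h^{2k}$ rate, and explains why the optimal $h^{2k+1}$ of the preceding theorems cannot be recovered in this case without altering the numerical initialization.
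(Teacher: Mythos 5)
Your high-level plan (weighted energy, Lemma \ref{yanx} for the nonlinear flux, Gronwall) is in the right family, but two essential mechanisms are missing and the argument as written does not close.

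First, with $a=0$ and $c<0$ the error equation \eqref{s2} still carries the dispersive term $-c\,\mathcal{A}_j(\xi^\zeta-\epsilon^\zeta,\widetilde{\rho};(\xi^\zeta-\epsilon^\zeta)^-)$. You test \eqref{s2} with $\widetilde{\rho}=\xi^u$ and explicitly discard the tests of \eqref{s5} and \eqref{s8}, so nothing cancels $|c|\,\mathcal{A}_j(\xi^\zeta,\xi^u)$. This term cannot simply be estimated: $\norm{\xi^\zeta}$ is not in your energy, and bounding it by inverse inequalities loses powers of $h$. Cancelling it forces the chain \eqref{s2}$\to$\eqref{s3}$\to$\eqref{s8}$\to$\eqref{s7}, and since the return path through \eqref{s5}, \eqref{s6}, \eqref{s1} is broken when $a=0$ (there is no $q$-term in \eqref{s1} to close the loop), the tests must be modified --- the analogue of the paper's $\widetilde{\phi}=b\xi^w_t-a(\xi^q-\xi^p)$, here something like $\rho=\xi^\eta+c\xi^\theta$ and $\phi=d\xi^v_t-c(\xi^\zeta-\xi^\theta)$, exploiting that $\theta$ and $\zeta$ both approximate $w_x$ so that their difference reduces to jump terms. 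This device is the heart of the paper's proof of Theorem \ref{thm4} and is absent from your sketch. Note also that the case is not ``entirely symmetric'': the mirror construction naturally puts $|bc|\norm{\xi^\theta}^2$, not $|ad|\norm{\xi^p}^2$, into the energy, so the $\norm{p-p_h}$ bound required by \eqref{eq_thm4} does not come for free in your chosen case.

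Second, your route to $\norm{\xi^p}$ fails as described. Testing \eqref{s2} with $-d\xi^p$ produces, besides $\tfrac{d^2}{2}\tfrac{d}{dt}\norm{\xi^p}^2$: the same uncontrolled $\zeta$-coupling now paired with $\xi^p$; the nonlinear and interface terms in $\xi^p_x$ and $[\xi^p]$ (the paper's $\mathcal{D}_j$), whose estimate is precisely where $O(h^{2k})$ enters the differential inequality and which you never bound; and the residual $-d\int\xi^u_t\xi^p$. The time-differentiated \eqref{s3} tested with $\xi^p$ yields an identity relating $\int\xi^v_t\xi^p$ and $\mathcal{A}_j(\xi^u_t,\xi^p)$, not one for $\int\xi^u_t\xi^p$; the relevant relation is the $p$--$v$ equation \eqref{s4}, and using it with the $L^2$ projection of $v$ leaves interface residuals $(\epsilon^{v,-}[\xi^u_t])_{j+\frac12}$ involving $\xi^u_t$, which Young's inequality cannot absorb since $\norm{\xi^u_t}$ is not controlled by the energy. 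The paper's remedy is the extra Radau projection $\tilde{\xi}^v=\mathcal{P}^-v-v_h$ together with the supplementary equations \eqref{s55}--\eqref{s66}, which convert the residual exactly into $\tfrac{d}{dt}\norm{\tilde{\xi}^v}^2$; this is missing from your plan. Finally, your diagnosis that the suboptimal initialization of $p_h$ is the ``irreducible source'' of the $h^{2k}$ rate is inaccurate: in the paper's proof the Gronwall forcing is itself only $O(h^{2k})$ (from $\sum_j([\epsilon^v][\xi^w])_{j+\frac12}$ and from the $[\xi^p]$ interface estimates), so the rate would not improve even with optimal initial data for $p_h$.
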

Below we only prove the case $c=0$, and 
the other case of $a=0$ can be done following a similar idea. 
One more projection of $v$, defined as
 \begin{eqnarray*}
  \tilde{\xi}^v=\mathcal{P}^-v-v_h,~~\tilde{\epsilon}^v=\mathcal{P}^-v-v, \qquad \text{with  } \tilde{\xi}^v-\tilde{\epsilon}^v=v-v_h,
\end{eqnarray*}
is needed in the proof, and this leads to the following extra error equations, in addition to \eqref{s1}-\eqref{s8},
 \begin{align}
  &\int_{I_j}(\tilde{\xi}^v-\tilde{\epsilon^v})\phi^* dx+\mathcal{A}_j(\xi^u-\epsilon^u,\phi^*;(\xi^u-\epsilon^u)^+)=0, \label{s55}\\
  &\int_{I_j}(\xi^p-\epsilon^p)\psi^* dx+\mathcal{A}_j(\tilde{\xi}^v-\tilde{\epsilon}^v,\psi^*;(\tilde{\xi}^v-\tilde{\epsilon}^v)^-)=0.\label{s66}
\end{align}
 
\begin{proof} Different set of test functions is chosen here, since the term $c\eta_{xxx}$ disappears when $c=0$.
Choose the test functions $\rho=\xi^\eta,~ \widetilde{\rho}=\xi^u+a\xi^p$ in (\ref{s1}) and (\ref{s2}), 
$\phi=d\xi_t^v, ~\widetilde{\phi}=b\xi_t^w-a(\xi^q-\xi^p)$ in (\ref{s3}) and (\ref{s6}),
$\psi=-a\xi^w,~\varphi=a\xi^w,~\psi^*=-a\xi^u_t$ in (\ref{s4}), (\ref{s5}), and (\ref{s66}),
and $\psi=d\xi^u, ~\widetilde{\psi}=b\xi^\eta, ~\phi^*=-a\tilde{\xi^v}$ after taking the time derivative of (\ref{s4}), (\ref{s7}) and (\ref{s55}), respectively.
Summing up all the equations involved above, applying the projection properties, and noting that $c=0$, we have 
\begin{equation}\label{eqqq2}
  \mathcal{P}^*_j-\mathcal{Q}^*_j=\mathcal{H}_j+\mathcal{C}_j-\mathcal{D}_j,
\end{equation}
where 
\begin{eqnarray}\label{defpjj}
  \begin{split}
  \mathcal{P}^*_j=&\frac{\partial}{\partial t}\int_{I_j} 
(\xi^\eta)^2+(\xi^u)^2+b(\xi^w)^2+d
(\xi^v)^2-a(\tilde{\xi}^v)^2-ad(\xi^p)^2 dx\\
+&b\Big(\big(\xi_t^{w,-}\xi^{\eta,+}\big)_{j-\frac{1}{2}}-
\big(\xi_t^{w,-}\xi^{\eta,+}\big)_{j+\frac{1}{2}}\Big)
+d\Big(\big(\xi_t^{v,-}\xi^{u,+}\big)_{j-\frac{1}{2}}-
\big(\xi_t^{v,-}\xi^{u,+}\big)_{j+\frac{1}{2}}\Big)\\
-&a\Big(\big(\tilde{\xi}^{v,-}\xi_t^{u,+}\big)_{j-\frac{1}{2}}-
\big(\tilde{\xi}^{v,-}\xi_t^{u,+}\big)_{j+\frac{1}{2}}\Big)
-a\Big(\big(\xi^{q,-}\xi^{\eta,+}\big)_{j-\frac{1}{2}}-
\big(\xi^{q,-}\xi^{\eta,+}\big)_{j+\frac{1}{2}}\Big)\\
-&a\Big(\big(\xi^{v,-}\xi^{w,+}\big)_{j-\frac{1}{2}}-
\big(\xi^{v,-}\xi^{w,-}\big)_{j+\frac{1}{2}}\Big)
-a\Big(-\big(\xi^{v,+}\xi^{w,+}\big)_{j-\frac{1}{2}}+
\big(\xi^{v,+}\xi^{w,-}\big)_{j+\frac{1}{2}}\Big),
\end{split}
\end{eqnarray}
\begin{eqnarray}\label{defqjj}
  \begin{split}
      \mathcal{Q}_j^*=&\int_I\big((\epsilon^\eta_t\xi^\eta+\epsilon^u_t(\xi^u+a\xi^p)
-a\varepsilon^v_t\tilde{\xi}^v\big)dx
+b\Big(\big(\epsilon_t^{w,-}\xi^{\eta,+}\big)_{j-\frac{1}{2}}-
\big(\epsilon_t^{w,-}\xi^{\eta,-}\big)_{j+\frac{1}{2}}\Big)\\
+&d\Big(\big(\epsilon_t^{v,-}\xi^{u,+}\big)_{j-\frac{1}{2}}-
\big(\epsilon_t^{v,-}\xi^{u,-}\big)_{j+\frac{1}{2}}\Big)-a
\Big(\big(\epsilon_t^{u,+}\tilde{\xi}^{v,+}\big)_{j-\frac{1}{2}}-
\big(\epsilon_t^{u,+}\tilde{\xi}^{v,-}\big)_{j+\frac{1}{2}}\Big)\\
-&a\Big(\big(\epsilon^{v,-}\xi^{w,+}\big)_{j-\frac{1}{2}}-
\big(\epsilon^{v,-}\xi^{w,-}\big)_{j+\frac{1}{2}}\Big)
-a\Big(-\big(\epsilon^{v,+}\xi^{w,+}\big)_{j-\frac{1}{2}}+
\big(\epsilon^{v,+}\xi^{w,-}\big)_{j+\frac{1}{2}}\Big)\\
-&a\Big(\big(\epsilon^{q,-}\xi^{\eta,+}\big)_{j-\frac{1}{2}}-
\big(\epsilon^{q,-}\xi^{\eta,-}\big)_{j+\frac{1}{2}}\Big),
  \end{split}
\end{eqnarray}
$\mathcal{H}_j$ and $\mathcal{C}_j$ are defined in \eqref{def_hj}-\eqref{def_cj}, and 
\begin{eqnarray*}
    \mathcal{D}_j=\int_{I_j}(f(u)-f(u_h))|a|\xi^p_xdx-\big((\eta_h^+-\{\eta_h\}
    +f(u)-\widehat{f})|a|\xi^{p,-}\big)_{j+\frac{1}{2}} +\big((\eta^+_h-\{\eta_h\}
    +f(u)-\widehat{f})|a|\xi^{p,+}\big)_{j-\frac{1}{2}}.
  \end{eqnarray*}
Summing over  all the elements $I_j$, we obtain
\[\sum^N_{j=1}\mathcal{P}^*_j-\sum^N_{j=1}\mathcal{Q}^*_j=\sum^N_{j=1}\mathcal{H}_j+
\sum^N_{j=1}\mathcal{C}_j-\sum^N_{j=1}\mathcal{D}_j,\]
where 
\[\sum^N_{j=1}\mathcal{P}^*_j=\frac{\partial}{\partial t}\int_I 
(\xi^\eta)^2+(\xi^u)^2+b(\xi^w)^2+d
(\xi^v)^2+|a|(\tilde{\xi}^v)^2+|ad|(\xi^p)^2 dx-|a|\sum^N_{j=1}\big([\xi^v][\xi^w]\big)_{j+\frac{1}{2}},\]
and 
\begin{eqnarray*}
  \sum^N_{j=1}\mathcal{Q}^*_j=\int_I\big(\epsilon^\eta_t\xi^\eta+\epsilon^u_t(\xi^u+a\xi^p)
-a\varepsilon^v_t\tilde{\xi}^v\big)dx+\sum^N_{j=1}\big((-a\epsilon^{q,-}+b\epsilon_t^{w,-})
[\xi^\eta]\big)_{j+\frac{1}{2}}\\
+\sum^N_{j=1}\big(-a\epsilon_t^{u,+}[\tilde{\xi}^v]\big)_{j+\frac{1}{2}}
+\sum^N_{j=1}\big(d\epsilon_t^{v,-}[\xi^u]\big)_{j+\frac{1}{2}}+a
\sum^N_{j=1}\big([\epsilon^v][\xi^w]\big)_{j+\frac{1}{2}}.
\end{eqnarray*}

Recall Lemma \ref{lemma2}, we have 
\[\sum^N_{j=1}\Big(\mathcal{H}_j+\mathcal{C}_j\Big)\le C\big(\norm{\xi^{u}}^2+\norm{\xi^{\eta}}^2
 +\norm{\xi^w}^2+\norm{\xi^v}^2\big)+Ch^{2k+2}.\]
Applying the Young's inequality and the projection property (\ref{interpolation}) leads to 
\[\int_I\big(\epsilon^\eta_t\xi^\eta+\epsilon^u_t(\xi^u+a\xi^p)
-a\varepsilon^v_t\tilde{\xi}^v\big)dx\le C\big(
\norm{\xi^u}^2+\norm{\xi^\eta}^2+\norm{\xi^p}^2+\norm{\tilde{\xi}^v}^2\big)+Ch^{2k+2},\]
and utilizing Lemma \ref{xiaol} (and its analogue for $[\tilde{\xi}^v]$) yields
\[\sum^N_{j=1}\big((-a\epsilon^{q,-}+b\epsilon_t^{w,-})
[\xi^\eta]\big)_{j+\frac{1}{2}}+\sum^N_{j=1}(-a\epsilon_t^{u,+}[\tilde{\xi}^v])_{j+\frac{1}{2}}
+\sum^N_{j=1}(d\epsilon_t^{v,-}[\xi^u])_{j+\frac{1}{2}}
\le C\big(\norm{\xi^w}^2+\norm{\xi^p}^2+\norm{{\xi}^v}^2\big)+Ch^{2k+2}.\]
We can apply Young's inequality, the projection property (\ref{interpolation}) and the inverse inequality \eqref{inverse} to obtain
\[\sum^N_{j=1}\big([\epsilon^v][\xi^w]\big)_{j+\frac{1}{2}}
=\sum^N_{j=1}\big(h^{-\frac{1}{2}}[\epsilon^v]h^{\frac{1}{2}}[\xi^w]\big)_{j+\frac{1}{2}}\le C\norm{\xi^w}^2+Ch^{2k}.\]
Combining these results leads to
\[\sum^N_{j=1}\mathcal{Q}^*_j\le C\big(\norm{\xi^u}^2+\norm{\xi^\eta}^2+\norm{\xi^p}^2+\norm{\tilde{\xi}^v}^2
+\norm{\xi^w}^2+\norm{{\xi}^v}^2\big)+Ch^{2k}.\]

Next, we would like to estimate $\sum_{j=1}^N \mathcal{D}_j$, which takes the form of 
 \[\sum^N_{j=1}\mathcal{D}_j=|a|\int_I\big(f(u)-f(u_h)\big)\xi^p_x dx
 +|a|\sum^N_{j=1}\big((\eta_h^+-\{\eta_h\}+f(u)-\widehat{f})[\xi^p]\big)
 _{j+\frac{1}{2}}=:|a|\Lambda_1+|a|\Lambda_2.\]
Notice that
\[f(u)-f(u_h)=\frac{u^2}{2}-\frac{(u_h)^2}{2}=\frac{1}{2}(u+u_h)(u-u_h)=\frac{1}{2}(u+u_h)(\xi^u-\epsilon^u).\]
If we define $\mathcal{L}(x)=(u+u_h)/2$, then by mean value theorem we have $|\mathcal{L}(x)-\mathcal{L}(x_j)|=O(h)$ for $x\in I_j$. 
Applying integration by parts, together with Young's inequality, the inverse inequality \eqref{inverse}, Lemma \ref{xiaol} and the projection property (\ref{interpolation}), we have
 \begin{eqnarray*}
   \big|\Lambda_1\big|&=&\Big|\sum^N_{j=1}\int_{I_j}\big(f(u)-f(u_h)\big)\xi^p_xdx\Big|
   =\Big|\sum^N_{j=1}\int_{I_j}\mathcal{L}(x)(\xi^u-\epsilon^u)\xi^p_x dx\Big|\\
   &=&\Big|\sum^N_{j=1}\int_{I_j}\big(\mathcal{L}(x)-\mathcal{L}(x_j)\big)(\xi^u-\epsilon^u)
   \xi^p_x dx+\sum^N_{j=1}\int_{I_j}\mathcal{L}(x_j)(\xi^u-\epsilon^u)\xi_x^p dx\Big|
   \\
   &\le&\sum^N_{j=1}\int_{I_j}C|\xi^u-\epsilon^u|h|\xi^p_x| dx+\Big|
   -\sum^N_{j=1}\int_{I_j}\mathcal{L}(x_j)\xi^u_x\xi^pdx
   -\sum^N_{j=1}\mathcal{L}(x_j)[\xi^u\xi^p]_{j+\frac{1}{2}}\Big|\\
   &\le& C\big(\norm{\xi^u}^2+\norm{\xi^p}^2+h^{2k+2}\big)
   +C\big(\norm{\xi^v}^2+\norm{\xi^p}^2\big)+C\big(\norm{\xi^v}^2
   +\norm{\xi^p}^2\big).
 \end{eqnarray*}
We now turn to the approximation of $\Lambda_2$, which has the following explicit expression
 \begin{eqnarray*}
   \Lambda_2=\sum^N_{j=1}\big((\eta_h^+-\{\eta_h\}+f(u)-\widehat{f})[\xi^p]\big)
 _{j+\frac{1}{2}}=\sum^N_{j=1}\big((\eta_h^+-\{\eta_h\}+\frac{u^2}{2}-\frac{\{u_h\}^2}{2}
 +\frac{\alpha}{2}[u_h])[\xi^p]\big)_{j+\frac{1}{2}}.
 \end{eqnarray*}
Applying the Young's inequality, the projection property  (\ref{interpolation}), the inverse inequality \eqref{inverse} and Lemma \ref{xiaol}, we have
\begin{align*}
&\Big|\sum^N_{j=1}\big((\eta_h^+-\{\eta_h\}\big)[\xi^p]\big)_{j+\frac{1}{2}}\Big|
 =\Big|\frac{1}{2}\sum^N_{j=1}\big([\xi^\eta-\epsilon^\eta][\xi^p]\big)_{j+\frac{1}{2}}\Big|
 \le C\big(\norm{\xi^w}^2+\norm{\xi^p}^2+h^{2k}\big), \\
&\Big|\sum^N_{j=1}\frac{\alpha}{2}\big([u_h][\xi^p]\big)_{j+\frac{1}{2}}\Big|=\Big|
   \sum^N_{j=1}\frac{\alpha}{2}\big([u_h-u][\xi^p]\big)_{j+\frac{1}{2}}\Big|
   \le C\sum^N_{j=1}\Big|\big([\epsilon^u-\xi^u][\xi^p]\big)_{j+\frac{1}{2}}\Big|
   \le C\big(\norm{\xi^v}^2+\norm{\xi^p}^2+h^{2k}\big).
\end{align*}
Similarly, utilizing the fact that $u+\{u_h\}$ is uniformly bounded leads to
 \[\Big|\sum^N_{j=1}\big(\frac{u^2}{2}-\frac{\{u_h\}^2}{2}\big)[\xi^p]_{j+\frac{1}{2}}\Big|
 =\frac{1}{2}\Big|\sum^N_{j=1}\big((u+\{u_h\})(\{\xi^u\}-\{\epsilon^u\})[\xi^p]\big)_{j+\frac{1}{2}}\Big|
 \le C\big(\norm{\xi^v}^2+\norm{\xi^p}^2+h^{2k}\big).\]
 Therefore, 
 \[|\Lambda_2|\le C\big(\norm{\xi^v}^2+\norm{\xi^p}^2+\norm{\xi^w}^2\big)+Ch^{2k},\]
which leads to
 \[\Big|\sum^N_{j=1}\mathcal{D}_j\Big|=|a\Lambda_1+a\Lambda_2|\le |a|\big(|\Lambda_1|+|\Lambda_2|\big)\le C\big(\norm{\xi^u}^2+\norm{\xi^p}^2
 +\norm{\xi^w}^2+\norm{\xi^v}^2+h^{2k}\big).\]
 
 Finally recall the equality 
 \[\sum^N_{j=1}\mathcal{P}^*_j=\sum^N_{j=1}\big(\mathcal{Q}^*_j+\mathcal{C}_j+\mathcal{H}_j-\mathcal{D}_j\big),\]
 and by Lemma \ref{xiaol} and Young's inequality, 
 \[\sum^N_{j=1}\big([\xi^v][\xi^w]\big)_{j+\frac{1}{2}}\le C\big(\norm{\xi^p}^2+\norm{\xi^w}^2\big),\]
 we obtain 
 \[\frac{\partial}{\partial t}\int_I 
(\xi^\eta)^2+(\xi^u)^2+b(\xi^w)^2+d
(\xi^v)^2+|a|(\tilde{\xi}^v)^2+|ad|(\xi^p)^2 dx\le 
C\big(\norm{\xi^{u}}^2+\norm{\xi^{\eta}}^2
 +\norm{\xi^w}^2+\norm{\xi^v}^2+\norm{\tilde{\xi}^v}^2+\norm{\xi^p}^2+h^{2k}\big).\]
We can apply the Gronwall's inequality and 
the estimate \eqref{eq_thm4} follows from Lemma \ref{initial} and the optimal projection error \eqref{interpolation}. 
 \end{proof}

 \subsection{The case of $a=b=c=d=0$}
 \begin{theorem}
   When $a=b=c=d=0$, 
   let $u,\eta$ be the exact solutions to the system (\ref{1}) which are sufficiently smooth and bounded. 
   Let $\eta_h,u_h\in V^k_h$ be the numerical solutions of the LDG scheme (\ref{scheme}). 
   For small enough $h$, there holds the following error estimate
\begin{equation}
    \norm{u-u_h}^2+\norm{\eta-\eta_h}^2\le Ch^{2k+1}.
  \end{equation}
 \end{theorem}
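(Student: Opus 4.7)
The plan is to treat the $a=b=c=d=0$ case as a degenerate reduction of the LDG scheme, in which the dispersive terms vanish and equation \eqref{1} becomes the nonlinear shallow water system, a symmetrizable hyperbolic system. In this setting all the auxiliary variables $v,w,p,q,\theta,\zeta$ and their error equations drop out of the energy analysis, so only \eqref{s1} and \eqref{s2} are active. I would test these with $\rho = \xi^\eta$ and $\widetilde{\rho} = \xi^u$, respectively, then sum over all cells. Using the periodic boundary conditions, the left-hand side collapses to $\tfrac{1}{2}\tfrac{d}{dt}(\norm{\xi^\eta}^2+\norm{\xi^u}^2)$ plus projection remainders of the form $\int_I \xi^\eta \epsilon^\eta_t + \xi^u \epsilon^u_t\,dx$, which are $O(\norm{\xi^\eta}^2+\norm{\xi^u}^2+h^{2k+2})$ by Young's inequality and \eqref{interpolation}.

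The right-hand side is $\sum_j (\mathcal{H}_j+\mathcal{C}_j)$, and here lies the main obstacle: the treatment of these nonlinear pieces in Lemma \ref{lemma2} relied on Lemma \ref{xiaol} to bound $\norm{\xi^\eta_x}$ by $\norm{\xi^w}$ and $\norm{\xi^u_x}$ by $\norm{\xi^v}$, but in the present case $\norm{\xi^w}^2$ and $\norm{\xi^v}^2$ no longer appear on the left side of the energy identity and cannot be absorbed there. To circumvent this, I would follow the integration-by-parts strategy already used in Theorems \ref{thm3} and \ref{thm3.5}: the linear contributions $\int_I (\xi^u-\epsilon^u)\xi^\eta_x\,dx + \sum_j ((u-\{u_h\})[\xi^\eta])_{j+1/2}$ from $\mathcal{H}_j$ are combined via integration by parts with the analogous piece in $\mathcal{C}_j$, exploiting the symmetry $\xi^u\xi^\eta_x + \xi^\eta\xi^u_x = (\xi^\eta\xi^u)_x$ to cancel the volume parts, leaving only boundary jump terms of the form $\sum_j \{\epsilon^{u,\eta}\}[\xi^{\eta,u}]_{j+1/2}$ that are controlled by Young's inequality and absorbed by the Lax-Friedrichs dissipation $\tfrac{\alpha}{2}\sum_j([\xi^\eta]^2+[\xi^u]^2)_{j+1/2}$ arising from the flux $\widehat F$.

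The genuinely nonlinear contributions $u\eta - u_h\eta_h$ and $f(u)-f(u_h)$ are expanded as in \eqref{taylor} and treated as products of error quantities. The a priori assumption $\norm{u-u_h}+\norm{\eta-\eta_h}\le h$ together with the uniform bounds on $u,\eta$ allow the quadratic remainder $(\xi^u-\epsilon^u)(\xi^\eta-\epsilon^\eta)$ to be absorbed into terms of order $\norm{\xi^u}^2+\norm{\xi^\eta}^2+h^{2k+2}$. The flux-linearization argument of Lemma \ref{yanx}, borrowed from \cite{YS}, provides a nonnegative jump contribution $-\tfrac{1}{4}\sum_j\alpha(\widehat f;u_h)[\xi^u]^2_{j+1/2}$ that, together with the Lax-Friedrichs dissipation from the first equation, absorbs the remaining $O(h^{-1})[\xi^\eta]^2$ and $[\xi^u]^2$ residues produced by the integration-by-parts step. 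The residual term of order $h^{2k+1}$ originates precisely from these boundary jumps and is responsible for the half-order loss, which is the expected penalty for nonlinear symmetrizable systems, consistent with \cite{QZ}.

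Combining these bounds yields a differential inequality of the form
\[
\frac{d}{dt}\bigl(\norm{\xi^\eta}^2+\norm{\xi^u}^2\bigr) \le C\bigl(\norm{\xi^\eta}^2+\norm{\xi^u}^2\bigr) + Ch^{2k+1}.
\]
Applying Gronwall's inequality, together with Lemma \ref{initial} for the optimal initial error and the projection estimate \eqref{interpolation}, yields the claimed $h^{2k+1}$ bound on $\norm{u-u_h}^2+\norm{\eta-\eta_h}^2$. I expect the only delicate step to be the precise bookkeeping of the boundary jump terms when combining $\mathcal{H}_j$, $\mathcal{C}_j$, and the Lax-Friedrichs dissipation, since several $[\xi^\eta]^2$ and $[\xi^u]^2$ quantities with different coefficients must be balanced to ensure nonnegativity of the net jump contribution.
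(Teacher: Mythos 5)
Your route is entirely different from the paper's: the paper does not run an energy argument at all for this case, but simply observes that with $a=b=c=d=0$ the system \eqref{1} is the nonlinear shallow water system, a symmetrizable first--order system of conservation laws, and invokes Theorem~2.1 of \cite{QZ} verbatim. So the comparison here is really between your from--scratch attempt and the machinery of \cite{QZ} that the paper leans on, and that comparison exposes a genuine gap in your sketch.

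The gap is the nonlinear cross term coming from $(\eta u)_x$ in the first equation. After the expansion \eqref{taylor}, $\sum_j\mathcal{H}_j$ contains $\int_I \eta\,(\xi^u-\epsilon^u)\,\xi^\eta_x\,dx$. In Lemma \ref{lemma2} this is controlled only because $\|\xi^\eta_x\|\le C\|\xi^w\|$ (Lemma \ref{xiaol}) and $\|\xi^w\|^2$ sits in the energy; here it does not. Your proposed fix, integration by parts exploiting $\xi^u\xi^\eta_x+\xi^\eta\xi^u_x=(\xi^\eta\xi^u)_x$, repairs only the \emph{linear} exchange terms ($u_x$ in the first equation against $\eta_x$ in the second). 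It cannot repair this nonlinear term, because the flux of the second equation is $\eta+u^2/2$, whose $\eta$--derivative is $1$ while the $u$--derivative of the first flux is $1+\eta$: the Jacobian $\bigl(\begin{smallmatrix} u & 1+\eta\\ 1 & u\end{smallmatrix}\bigr)$ is not symmetric, so $\int \eta\,\xi^u\xi^\eta_x\,dx$ has no partner of the form $\int \eta\,\xi^\eta\xi^u_x\,dx$ in $\mathcal{C}_j$ with which to form a perfect derivative. Integrating it by parts (even after freezing $\eta$ at cell centers as in the proof of Theorem \ref{thm3.5}) produces $\int \xi^u_x\,\xi^\eta\,dx$, which can only be bounded by $h^{-1}\|\xi^u\|\,\|\xi^\eta\|$ via the inverse inequality or by $\|\xi^v\|\,\|\xi^\eta\|$ via Lemma \ref{xiaol}; neither quantity is controlled when $a=b=c=d=0$, since neither $\|\xi^v\|^2$ nor $\|\xi^w\|^2$ appears on the left of your energy identity. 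This is exactly the obstruction that forces \cite{QZ} to work with a symmetrizer-weighted energy $\int \xi^{T}A_0\xi\,dx$ (for shallow water one may take $A_0=\mathrm{diag}(1,1+\eta)$, so that $A_0F'$ is symmetric and the cross terms collapse to a total derivative plus $O(\|\xi\|^2)$), rather than with the plain $\|\xi^\eta\|^2+\|\xi^u\|^2$ that you propose. As written, your differential inequality does not close; either adopt the symmetrizer (at which point you are reproving \cite[Theorem 2.1]{QZ}) or, as the paper does, cite that result directly.
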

Under this assumption, the abcd Boussinesq system reduces to the nonlinear shallow water equations, which is 
a first order symmetrizable system of conservation law. Therefore, the above theorem is a direct application of \cite[Theorem 2.1]{QZ}.

 \section{Numerical Experiments}\label{num} \setcounter{equation}{0} \setcounter{table}{0} \setcounter{figure}{0}
The numerical results of the proposed LDG methods are presented in this section. The third order strong stability preserving Runge-Kutta methods \cite{SSP} are used as the temporal discretization. We first perform the accuracy test for different choices of the parameters $a,b,c,d$, and then present several wave collision tests, including a simulation with finite-time blow-up wave, to demonstrate the performance of our methods. 
 
  \subsection{Accuracy test}\label{accu}
Based on different choices of the parameters $a,b,c,d$ studied in Section \ref{thms}, several accuracy tests are carried out to compare the numerical solutions with the exact solutions (specified for each test below). Uniform meshes are considered. Nx denotes the total number of spatial cells, and Nt denotes the number of temporal steps. Periodic boundary conditions are applied for each examples. Note that some exact solutions, for instance the solitary-wave solutions, are not periodic in space. Thanks to their exponential decaying property, their values at two boundaries are very small, hence one can treat them as periodic functions and use this trick to check for accuracy. We choose the final time $T$ to be small such that the error due to periodic boundary conditions could be negligible.
We implemented the LDG schemes with both $k=1$ and $k=2$, which correspond to piecewise linear basis and piecewise quadratic  basis functions, respectively. 

\textbf{Case 1 (Theorem \ref{thm1}).} First, we consider the parameters 
$a=-\frac{7}{30},~ b=\frac{7}{15},~ c=-\frac{2}{5},~ d=\frac{1}{2}$, which matches the assumptions in Theorem \ref{thm1}.
For such choice of parameters, the following traveling-wave exact solutions have been provided in \cite{MC} 
   \begin{eqnarray}\label{eq_sec5.1}
     \begin{cases}
\eta(x,t)=\frac{3}{8}\sech^2\Big(\frac{1}{2}\sqrt{\frac{5}{7}}\big(x-20-\frac{5\sqrt{2}}{6}t\big)\Big), &\\
\\
u(x,t)=\frac{1}{2\sqrt{2}}\sech^2\Big(\frac{1}{2}\sqrt{\frac{5}{7}}\big(x-20-\frac{5\sqrt{2}}{6}t\big)\Big).&
\end{cases}
   \end{eqnarray}
The computational domain is set as $I=[0,40]$.
The initial conditions of $u(x,0)$ and $\eta(x,0)$ are obtained by setting $t = 0$ in \eqref{eq_sec5.1}, and the numerical error are computed at the final stopping time $T = 0.8$. The numerical error in $L^2$ and $L^\infty$ norms and the corresponding convergence rates are presented in Table \ref{e11} for $k=1$ and in Table \ref{e12} for $k=2$. Under both cases, the optimal convergence orders, i.e., $k+1$-th order, in both $u$ and $\eta$ can be observed, which indicated that the error estimate of $k+1/2$-th order in Theorem \ref{thm1} is not sharp. As we can observe that the lost of $1/2$ accuracy in the proof is due to the third order spatial derivative terms $u_{xxx}$, $\eta_{xxx}$, and one may try the idea presented in \cite{XS2012} to provide optimal error estimate for linear high order wave equations. The main idea there was to derive energy stability for the various auxiliary variables via using the scheme and its time derivatives with different test functions. Due to the existence of the nonlinear terms, we expect this procedure to be very cumbersome, and will investigate this in a future work. 
   
   \begin{table}[H]
     \caption{Numerical error and convergence rates of Case 1, with $k=1$.}
     \centering
     \begin{tabular}{|c|c|c|c|c|c|c|c|c|c|}
       \hline
       Nx&Nt&$\norm{e_u}_{L^2}$&rate&$\norm{e_\eta}_{L^2}$&rate&$\norm{e_u}_{\infty}$&rate&$\norm{e_\eta}_{\infty}$&rate\\
       \hline
              20&20&0.05225&0&0.05056&0&0.02611&0&0.01857&0\\
       \hline
       40&40&0.01359&1.9426&0.01211&2.0620&0.01433&0.8653&9.159E-3&1.0196\\
       \hline
       80&80&2.952E-3&2.2029&2.926E-3&2.0492&4.152E-3&1.7878&3.895E-3&1.2377\\
       \hline
       160&160&6.805E-4&2.1172&7.078E-4&2.0474&1.167E-3&1.8310&1.208E-3&1.6886\\
       \hline
       320&320&1.657E-4&2.0380&1.748E-4&2.0174&3.105E-4&1.9099&3.273E-4&1.8842\\
       \hline
     \end{tabular}
     \label{e11}
   \end{table}
   
   \begin{table}[H]
     \caption{Numerical error and convergence rates of Case 1, with $k=2$.}
 \centering
 \begin{tabular}{|c|c|c|c|c|c|c|c|c|c|}
\hline 
Nx&Nt&$\norm{e_u}_{L^2}$&rate&$\norm{e_\eta}_{L^2}$&rate&$\norm{e_u}_{\infty}$&rate&$\norm{e_\eta}_{\infty}$&rate\\
\hline 
20&20&5.578E-3&0&6.824E-3&0&3.188E-3&0&4.951E-3&0\\
\hline
40&40&7.203E-4&2.9532&7.834E-4&3.1228&1.202E-3&1.4071&8.763E-4&2.4983\\
\hline
80&80&9.575E-5&2.9113&9.584E-5&3.0312&1.940E-4&2.6319&1.727E-4&2.3428\\
\hline
160&160&1.276E-5&2.9078&1.321E-5&2.8589&2.695E-5&2.8472&2.690E-5&2.6830\\
\hline
320&320&1.645E-6&2.9557&1.732E-6&2.9311&3.498E-6&2.9456&3.645E-6&2.8836\\
\hline
\end{tabular}
\label{e12}
\end{table}

\textbf{Case 2 (Theorem \ref{thm2} with $a=c\neq 0$).} 
Here, we consider the parameters 
$a=c=\frac{1}{12},~b=\frac{1}{18},~d=\frac{1}{9}$, which matches the assumptions in Theorem \ref{thm2}.
As the exact solution is not available, we consider the following manufactured ``exact'' solution of the form 
\begin{eqnarray*}
  \begin{cases}
    \eta(x,t)=\cos(x+t), &\\
    u(x,t)=\sin(x+t),
  \end{cases}
\end{eqnarray*}
and modify the abcd Boussinesq system to be
\begin{equation*}
  \begin{cases}
\eta_t+u_x+(\eta u)_x+\frac{1}{12}u_{xxx}-\frac{1}{18}\eta_{xxt}=\cos(2x+2t)+\frac{11}{12}\cos(x+t)-\frac{19}{18}\sin(x+t), &\\
\\
u_t+\eta_x+\big(\frac{u^2}{2}\big)_x+\frac{1}{12}\eta_{xxx}-\frac{1}{9}u_{xxt}=\frac{1}{2}\sin(2x+2t)+\frac{10}{9}\cos(x+t)-\frac{11}{12}\sin(x+t),
\end{cases}
\end{equation*}
by adding some source terms on the right-hand side.
The computational domain is set as $I=[0,2\pi]$.
The initial conditions of $u(x,0)$ and $\eta(x,0)$ are obtained by setting $t = 0$ in the ``exact'' solutions, and the numerical error are computed at the final stopping time $T = {\pi}/{50}$. The numerical error in $L^2$ and $L^\infty$ norms and the corresponding convergence rates are presented in Table \ref{e21} for $k=1$ and in Table \ref{e22} for $k=2$. Again, the optimal convergence orders, i.e., $k+1$-th order, in both $u$ and $\eta$ can be observed, which indicated that the error estimate of $k+1/2$-th order in Theorem \ref{thm2} is not sharp. The same remedy discussed in Case 1 may be applied to improve the error estimate. 

\begin{table}[H]
     \caption{Numerical error and convergence rates of Case 2, with $k=1$.}
\centering
\begin{tabular}{|c|c|c|c|c|c|c|c|c|c|}
 \hline
 Nx&Nt&$\norm{e_u}_{L^2}$&rate&$\norm{e_\eta}_{L^2}$&rate&$\norm{e_u}_{\infty}$&rate&$\norm{e_\eta}_{\infty}$&rate\\
\hline 
20&20&9.183E-3&0&0.01057&0&0.01409&0&0.01581&0\\
\hline
40&40&2.732E-3&1.7492&2.772E-3&1.9306&4.132E-3&1.7697&4.171E-3&1.9231\\
\hline
80&80&6.789E-4&2.0085&6.732E-4&2.0422&1.028E-3&2.0069&1.020E-3&2.0312\\
\hline
160&160&1.697E-4&2.0004&1.691E-4&1.9935&2.571E-4&1.9995&2.563E-4&1.9931\\
\hline
320&320&4.240E-5&2.0005&4.232E-5&1.9980&6.427E-5&2.0000&6.418E-5&1.9979\\
\hline
 \end{tabular}
 \label{e21}
\end{table}

\begin{table}[H]
     \caption{Numerical error and convergence rates of Case 2, with $k=2$.}
\centering
 \begin{tabular}{|c|c|c|c|c|c|c|c|c|c|}
\hline 
Nx&Nt&$\norm{e_u}_{L^2}$&rate&$\norm{e_\eta}_{L^2}$&rate&$\norm{e_u}_{\infty}$&rate&$\norm{e_\eta}_{\infty}$&rate\\
\hline 
20&20&3.395E-4&0&3.281E-4&0&6.426E-4&0&4.727E-4&0\\
\hline
40&40&3.705E-5&3.1958&3.671E-5&3.1599&6.561E-5&3.2920&6.795E-5&2.7984\\
\hline
80&80&4.262E-6&3.1200&4.510E-6&3.0252&7.926E-6&3.0492&8.431E-6&3.0106\\
\hline
160&160&5.413E-7&2.9769&5.422E-7&3.0562&1.010E-6&2.9721&1.015E-6&3.0539\\
\hline
320&320&6.747E-8&3.0042&6.734E-8&3.0093&1.264E-7&2.9991&1.259E-7&3.0109\\
\hline
\end{tabular}
\label{e22}
\end{table}

\textbf{Case 3 (Theorem \ref{thm2} with $a=c=0$).}  
In Theorem \ref{thm2}, we are able to prove the optimal convergence rate when $a=c=0$, under which case the abcd Boussinesq system reduces to the coupled BBM equations. In the following test, we choose the parameters $a=c=0,~b=d=\frac{1}{6}$, 
and the corresponding traveling-wave exact solutions, provided in \cite{JX}, are given by
\begin{equation*}
  \begin{cases}
     \eta(x,t)=\frac{15}{4}\Big\{-2+\cosh\Big(\sqrt{\frac{18}{5}}\big(x-20-\frac{5}{2}t\big)\Big)\Big\}
     \sech^4\Big(\sqrt{\frac{9}{10}}\big(x-20-\frac{5}{2}t\big)\Big),&\\
     \\    
     u(x,t)=\frac{15}{2}\sech^2\Big(\sqrt{\frac{9}{10}}\big(x-\frac{L}{2}-\frac{5}{2}t\big)\Big).
  \end{cases}
\end{equation*}
The computational domain is set as $I=[0,40]$.
The initial conditions of $u(x,0)$ and $\eta(x,0)$ are obtained by setting $t = 0$ in the exact solutions, and the numerical error are computed at the final stopping time $T = 0.01$. The numerical error in $L^2$ and $L^\infty$ norms and the corresponding convergence rates are presented in Table \ref{e23} for $k=1$ and in Table \ref{e24} for $k=2$. The optimal convergence orders in both $u$ and $\eta$ can be observed, which matches the optimal error estimate analysis in Theorem \ref{thm2}. 

\begin{table}[htb]
     \caption{Numerical error and convergence rates of Case 3, with $k=1$.}
\centering
\begin{tabular}{|c|c|c|c|c|c|c|c|c|c|}
\hline 
Nx&Nt&$\norm{e_u}_{L^2}$&rate&$\norm{e_\eta}_{L^2}$&rate&$\norm{e_u}_{\infty}$&rate&$\norm{e_\eta}_{\infty}$&rate\\
\hline 
20&20&0.6819&0&1.8824&0&0.7956&0&1.5628&0\\
\hline
40&40&0.3377&1.0139&0.2893&2.7018&0.4860&0.7111&0.3183&2.2956\\
\hline
80&80&0.1108&1.6077&0.1898&0.6084&0.2138&1.1846&0.3091&0.0424\\
\hline
160&160&0.02812&1.9784&0.05000&1.9241&0.06636&1.6879&0.1255&1.2999\\
\hline
320&320&7.057E-3&1.9944&0.01261&1.9872&0.01745&1.9268&0.03474&1.8534\\
\hline
\end{tabular}
\label{e23}
\end{table}
\begin{table}[H]
     \caption{Numerical error and convergence rates of Case 3, with $k=2$.}
\centering
\begin{tabular}{|c|c|c|c|c|c|c|c|c|c|}
\hline 
Nx&Nt&$\norm{e_u}_{L^2}$&rate&$\norm{e_\eta}_{L^2}$&rate&$\norm{e_u}_{\infty}$&rate&$\norm{e_\eta}_{\infty}$&rate\\
\hline 
20&20&0.4594&0&0.4591&0&0.7066&0&0.5598&0\\
\hline
40&40&0.1025&2.1637&0.2525&0.8626&0.1830&1.9491&0.4554&0.2979\\
\hline
80&80&0.01054&3.2832&0.03005&3.0707&0.02108&3.1175&0.07503&2.6015\\
\hline
160&160&1.335E-3&2.9808&3.668E-3&3.0341&3.300E-3&2.6756&0.01056&2.8282\\
\hline
320&320&1.676E-4&2.9933&4.629E-4&2.9862&4.195E-4&2.9758&1.350E-3&2.9682\\
\hline
\end{tabular}
\label{e24}
\end{table}

\textbf{Case 4 (Theorem \ref{thm3}).}   
Here, we consider the parameters 
$a=c=\frac{1}{9},~b=\frac{1}{9},~d=0$, which matches the assumptions in Theorem \ref{thm3}.
As the exact solution is not available, we consider the following manufactured ``exact'' solution of the form 
\begin{eqnarray*}
  \begin{cases}
    \eta(x,t)=\cos(x+t), &\\
    u(x,t)=\sin(x+t),
  \end{cases}
\end{eqnarray*}
and modify the abcd Boussinesq system to be
\begin{equation*}
  \begin{cases}
\eta_t+u_x+(\eta u)_x+\frac{1}{9}u_{xxx}-\frac{1}{9}\eta_{xxt}=\cos(2x+2t)+\frac{8}{9}\cos(x+t)-\frac{10}{9}\sin(x+t), &\\
u_t+\eta_x+\big(\frac{u^2}{2}\big)_x+\frac{1}{9}\eta_{xxx}=\frac{1}{2}\cos(2x+2t)+\cos(x+t)-\frac{8}{9}\sin(x+t).&
\end{cases}
\end{equation*}
by adding some source terms on the right-hand side.
The computational domain is set as $I=[0,8\pi]$.
The initial conditions of $u(x,0)$ and $\eta(x,0)$ are obtained by setting $t = 0$ in the ``exact'' solutions, and the numerical error are computed at the final stopping time $T = 0.04$.
 The numerical error in $L^2$ and $L^\infty$ norms and the corresponding convergence rates are presented in Table \ref{e31} for $k=1$ and in Table \ref{e32} for $k=2$. The optimal convergence orders, i.e., $k+1$-th order, in both $u$ and $\eta$ can be observed. 

\begin{table}[H]
     \caption{Numerical error and convergence rates of Case 4, with $k=1$.}
  \centering
\begin{tabular}{|c|c|c|c|c|c|c|c|c|c|}
\hline 
Nx&Nt&$\norm{e_u}_{L^2}$&rate&$\norm{e_\eta}_{L^2}$&rate&$\norm{e_u}_{\infty}$&rate&$\norm{e_\eta}_{\infty}$&rate\\
\hline 
20&1000&0.2329&0&0.2327&0&0.1329&0&0.1375&0\\
\hline
40&2000&0.05657&2.0418&0.05645&2.0436&0.04002&1.7315&0.04048&1.7643\\
\hline
80&4000&0.01779&1.6694&0.01714&1.7194&0.01318&1.6018&0.01302&1.6361\\
\hline
160&8000&5.131E-3&1.7935&5.044E-3&1.7649&3.841E-3&1.7794&3.818E-3&1.7701\\
\hline
320&16000&1.339E-3&1.9382&1.331E-3&1.9221&1.007E-3&1.9319&1.005E-3&1.9259\\
\hline
\end{tabular} \label{e31}
\end{table}

\begin{table}[H]
     \caption{Numerical error and convergence rates of Case 4, with $k=2$.}
\centering
\begin{tabular}{|c|c|c|c|c|c|c|c|c|c|}
\hline 
Nx&Nt&$\norm{e_u}_{L^2}$&rate&$\norm{e_\eta}_{L^2}$&rate&$\norm{e_u}_{\infty}$&rate&$\norm{e_\eta}_{\infty}$&rate\\
\hline 
20&1000&0.03777&0&0.03034&0&9.041E-3&0&9.676E-3&0\\
\hline
40&2000&5.215E-3&2.8563&3.617E-3&3.0683&7.160E-4&3.6584&1.651E-3&2.5512\\
\hline
80&4000&5.268E-4&3.3074&4.844E-4&2.9005&2.896E-4&1.3062&3.663E-4&2.1721\\
\hline
160&8000&6.488E-5&3.0214&6.430E-5&2.9132&5.508E-5&2.3943&5.629E-5&2.7023\\
\hline
320&16000&8.480E-6&2.9356&8.372E-6&2.9411&7.814E-6&2.8173&7.663E-6&2.8767\\
\hline
\end{tabular}\label{e32}
\end{table}

\textbf{Case 5 (Theorem \ref{thm3.5}).}   
Here, we consider the parameters 
$a=c=b=0,~d=\frac{1}{6}$, which matches the assumptions in Theorem \ref{thm3.5}.
An exact solution is presented in \cite{CB} as follows
\begin{eqnarray*}
  \begin{cases}
    \eta(x,t)=-1 &\\
    u(x,t)=\frac23+\sech^2\Big(\frac{1}{\sqrt{2}}(x-20-t)\Big),
  \end{cases}
\end{eqnarray*}
The computational domain is set as $I=[0,40]$.
The initial conditions of $u(x,0)$ and $\eta(x,0)$ are obtained by setting $t = 0$ in the exact  solutions,
 and the numerical error are computed at the final stopping time $T = 0.01$.
 The numerical error in $L^2$ and $L^\infty$ norms and the corresponding convergence rates are presented in Table \ref{e3.51} for $k=1$ and in Table \ref{e3.52} 
 for $k=2$.  
For this case, we can observe that $\norm{e_u}_{L^2}$ demonstrates a $k+1$th order of accuracy, and 
$\norm{e_\eta}_{L^2}$ presents the machine error at the level of $10^{-14}$. 
This is due to the fact that the exact solution $\eta(t,x)=-1$ is a constant,
 hence its approximation via polynomial is exact and the update equation for $\eta$ is also exact.
\begin{table}[H]
     \caption{Numerical error and convergence rates of Case 5, with $k=1$.}
  \centering
\begin{tabular}{|c|c|c|c|c|c|c|c|c|c|}
\hline 
Nx&Nt&$\norm{e_u}_{L^2}$&rate&$\norm{e_\eta}_{L^2}$&$\norm{e_u}_{\infty}$&rate&$\norm{e_\eta}_{\infty}$\\
\hline 
20&20&0.05729&0&7.615E-16&0.07459&0&2.220E-16\\
\hline
40&40&0.03581&0.6779&7.322E-16&0.05536&0.4301&2.220E-16\\
\hline
80&80&9.578E-3&1.9027&7.167E-16&0.01849&1.5820&2.220E-16\\
\hline
160&160&2.413E-3&1.9889&7.347E-16&5.020E-3&1.8812&2.220E-16\\
\hline
320&320&6.045E-4&1.9971&7.697E-16&1.290E-3&1.9603&2.220E-16\\
\hline
\end{tabular} \label{e3.51}
\end{table}

\begin{table}[H]
     \caption{Numerical error and convergence rates of Case 5, with $k=2$.}
\centering
\begin{tabular}{|c|c|c|c|c|c|c|c|c|c|}
\hline 
Nx&Nt&$\norm{e_u}_{L^2}$&rate&$\norm{e_\eta}_{L^2}$&$\norm{e_u}_{\infty}$&rate&$\norm{e_\eta}_{\infty}$\\
\hline 
20&20&0.04538&0&1.772E-15&0.06566&0&1.221E-15\\
\hline
40&40&5.871E-3&2.9504&2.296E-15&0.01028&2.6750&1.221E-15\\
\hline
80&80&6.758E-4&3.1190&4.429E-15&1.342E-3&2.9372&3.109E-15\\
\hline
160&160&8.530E-5&2.9860&4.197E-14&1.819E-4&2.8837&3.109E-14\\
\hline
320&320&1.069E-5&2.9961&6.027E-14&2.313E-5&2.9748&3.020E-14\\
\hline
\end{tabular}\label{e3.52}
\end{table}

\textbf{Case 6 (Theorem \ref{thm4}).}  
The parameters $a=0,~b=d=\frac{1}{3},~c=-\frac{1}{3}$, which matches the assumptions in Theorem \ref{thm4}, are chosen in
this test. A set of exact solutions, studied in \cite{CB}, takes the form of 
\begin{eqnarray*}
  \begin{cases}
    \eta(x,t)=-1, &\\
    u(x,t)=1+6\sech^2\Big(\frac{1}{\sqrt 2}\big(x-20-3t\big)\Big).
  \end{cases}
\end{eqnarray*}
The computational domain is set as $I=[0,40]$.
The initial conditions of $u(x,0)$ and $\eta(x,0)$ are obtained by setting $t = 0$ in the exact solutions, 
and the numerical error are computed at the final stopping time $T = 0.01$. The numerical error in $L^2$ and $L^\infty$ 
norms and the corresponding convergence rates are presented in Table \ref{e41} for $k=1$ and in Table \ref{e42} for $k=2$. 
 We can observe that $\norm{e_u}_{L^2}$ demonstrates a $k+1$th order of accuracy, 
 and, for the same reason as in Case 5, $\norm{e_\eta}_{L^2}$ 
 presents the machine error. 

\begin{table}[H]
     \caption{Numerical error and convergence rates of Case 6, with $k=1$.}
\centering
\begin{tabular}{|c|c|c|c|c|c|c|c|}
\hline 
Nx&Nt&$\norm{e_u}_{L^2}$&rate&$\norm{e_\eta}_{L^2}$&$\norm{e_u}_{\infty}$&rate&$\norm{e_\eta}_{\infty}$\\
\hline
20&20&0.3461&0&7.608E-16&0.3831&0&2.220E-16\\
\hline
40&40&0.2149&0.6873&7.322E-16&0.3037&0.3354&2.220E-16\\
\hline
80&80&0.05755&1.9011&7.175E-16&0.1071&1.5031&2.220E-16\\
\hline
160&160&0.01450&1.9884&7.460E-10&0.03042&1.8164&2.220E-16\\
\hline
320&320&3.633E-3&1.9969&8.008E-16&7.792E-3&1.9647&2.220E-16\\
\hline
\end{tabular}\label{e41}
\end{table}

\begin{table}[H]
     \caption{Numerical error and convergence rates of Case 6, with $k=2$.}
\centering
\begin{tabular}{|c|c|c|c|c|c|c|c|}
\hline 
Nx&Nt&$\norm{e_u}_{L^2}$&rate&$\norm{e_\eta}_{L^2}$&$\norm{e_u}_{\infty}$&rate&$\norm{e_\eta}_{\infty}$\\
\hline 
20&20&0.2722&0&1.800E-15&0.3726&0&1.332E-15\\
\hline
40&40&0.03517&2.9519&2.313E-15&0.06220&2.5826&1.332E-15\\
\hline
80&80&4.064E-3&3.1137&4.538E-15&8.324E-3&2.9016&3.109E-15\\
\hline
160&160&5.132E-4&2.9852&4.278E-14&1.113E-3&2.9022&3.331E-14\\
\hline
320&320&6.432E-5&2.9961&6.325E-14&1.406E-4&2.9850&3.331E-14\\
\hline
\end{tabular}\label{e42}
\end{table}

\textbf{Case 7 (KdV-KdV system).} In this last example for accuracy test, we consider the case of $b=d=0$, 
under which case the abcd Boussinesq system reduces to the coupled KdV-KdV equations. Note that we were not able to 
provide an error estimate analytically for this system. Let $b=d=0,~a=c=\frac{1}{6}$, there exits exact solutions of the form \cite{MC}
\begin{equation*}
  \begin{cases} 
    \eta(x,t)=-1+\frac{3}{2}\sech^2\Big(\sqrt{\frac{3}{2}}(x-20-\sqrt{2}t)\Big),&\\
    \\
    u(x,t)=\frac{3}{\sqrt{2}}\sech^2\Big(\sqrt{\frac{3}{2}}(x-20-\sqrt{2}t)\Big).
  \end{cases}
\end{equation*}
The computational domain is set as $I=[0,40]$.
The initial conditions of $u(x,0)$ and $\eta(x,0)$ are obtained by setting $t = 0$ in the exact solutions, and the numerical error are computed at the final stopping time $T = 0.01$. The numerical error in $L^2$ and $L^\infty$ norms and the corresponding convergence rates are presented in Table \ref{e61} for $k=1$ and in Table \ref{e62} for $k=2$. The optimal convergence orders in both $u$ and $\eta$ can be observed numerically. 
This is consistent with the Galerkin approximation of the KdV-KdV equations in \cite{BDM}.

\begin{table}[H]
     \caption{Numerical error and convergence rates of Case 7, with $k=1$.}
\centering
\begin{tabular}{|c|c|c|c|c|c|c|c|c|c|}
\hline 
Nx&Nt&$\norm{e_u}_{L^2}$&rate&$\norm{e_\eta}_{L^2}$&rate&$\norm{e_u}_{\infty}$&rate&$\norm{e_\eta}_{\infty}$&rate\\
\hline 
20&200&0.3526&0&0.2514&0&0.3645&0&0.2503&0\\
\hline
40&400&0.09791&1.8486&0.08920&1.4950&0.1663&1.1325&0.1244&1.0088\\
\hline
80&800&0.06758&0.5349&0.05326&0.7440&0.1493&0.1558&0.1133&0.1351\\
\hline
160&1600&0.01877&1.8478&0.01352&1.9783&0.05732&1.3807&0.04096&1.4675\\
\hline
320&3200&4.820E-3&1.9618&3.423E-3&1.9815&0.01598&1.8340&0.01131&1.8563\\
\hline
\end{tabular}\label{e61}
\end{table}

\begin{table}[H]
     \caption{Numerical error and convergence rates of Case 7, with $k=2$.}
\centering
\begin{tabular}{|c|c|c|c|c|c|c|c|c|c|}
\hline 
Nx&Nt&$\norm{e_u}_{L^2}$&rate&$\norm{e_\eta}_{L^2}$&rate&$\norm{e_u}_{\infty}$&rate&$\norm{e_\eta}_{\infty}$&rate\\
\hline 
20&200&0.1449&0&0.1346&0&0.2870&0&0.2335&0\\
\hline
40&400&0.07251&0.9989&0.05373&1.3243&0.1599&0.8439&0.1178&0.9874\\
\hline
80&800&8.667E-3&3.0645&6.394E-3&3.0710&0.02188&2.8693&0.01675&2.8138\\
\hline
160&1600&1.087E-3&2.9953&7.816E-4&3.0322&3.703E-3&2.5629&2.701E-3&2.6326\\
\hline
320&3200&1.402E-4&2.9548&9.963E-5&2.9719&5.021E-4&2.8828&3.572E-4&2.9185\\
\hline
\end{tabular}\label{e62}
\end{table}

\subsection{Wave collisions}\label{colli}
Motivated by the experiments in \cite{BonaChen2016,CB}, we consider two examples of traveling-wave collisions in this section. The head-on collision, namely, two waves travel in opposite direction, is considered here. 

\subsubsection{Finite time blow up}\label{blowup}
First, we consider the experiment performed in \cite{BonaChen2016,CB} for the coupled BBM equations
with the choice of parameters $a=c=0,~b=d=\frac{1}{6}$.
The system admits the traveling wave exact solution of the form
\begin{eqnarray*}
    \begin{cases}
      \eta_{\pm}(x,t)=\frac{15}{2}\sech^2\Bigg(\frac{3}{\sqrt{10}}\Big(x\pm\frac52 t\Big)\Bigg)   
      -\frac{45}{4}\sech^4\Bigg(\frac{3}{\sqrt{10}}\Big(x\pm\frac52 t\Big)\Bigg),\\
\\
u_{\pm}(x,t)=\mp\frac{15}{2}\sech^2\Bigg(\frac{3}{\sqrt{10}}\Big(x\pm\frac52 t\Big)\Bigg),
    \end{cases}
\end{eqnarray*}
traveling at opposite direction. 
The initial conditions are set as
\begin{eqnarray*}
    &\eta(x,0)=\eta_+(x-x_+,0)+\eta_-(x-x_-,0),\\
    &u(x,0)=u_+(x-x_+,0)+u_-(x-x_-,0),
  \end{eqnarray*}
where $x_{\pm}=\pm7$ in the computational domain $I=[-14,14]$. Initially the waves are centered at $7$ and $-7$,
and will propagate towards each other. 

We use the LDG scheme with $k=2$ and $h = 0.175$  to simulate this example. 
The time step size is chosen to be $\Delta t=0.00107h$ up to $t=3.24$, and $\Delta t=0.00052h$ for $3.24\leq t\leq4.4$.
The numerical solutions $u_h$ and $\eta_h$ at various times (before the blow-up) are shown in Fig. \ref{fig1}.
We can observe that the $L_\infty$-norm of the numerical solutions $\eta_h$ at $t=4.4$ reaches 120, which indicates the possible
blow-up of the $\eta_h$, as well as the spatial derivative of $u_h$. This is consistent with the observations in \cite{BonaChen2016,CB}.

\begin{figure}[H]
\centering
\subfigure[$u_h$ at $t=0$]{\includegraphics[width=0.35\textwidth]{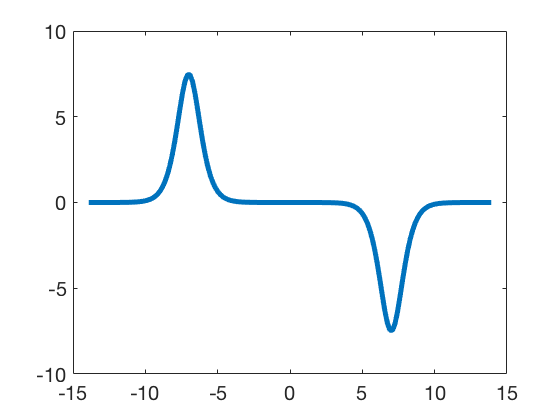}}
\subfigure[$\eta_h$ at $t=0$]{\includegraphics[width=0.35\textwidth]{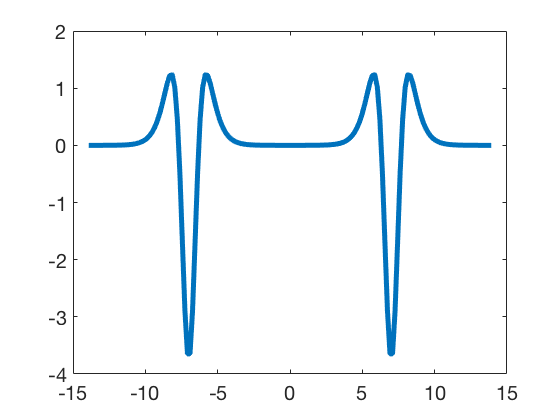}}
\subfigure[$u_h$ at $t=1.5$]{\includegraphics[width=0.35\textwidth]{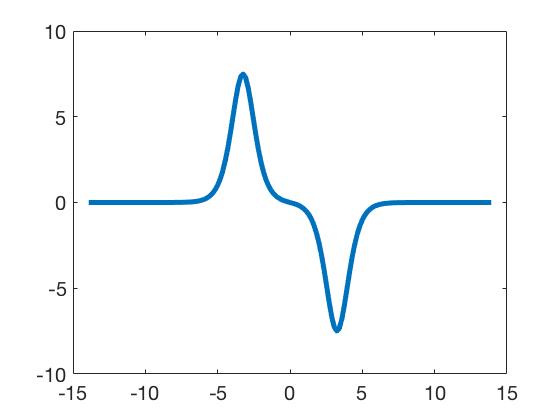}}
\subfigure[$\eta_h$ at $t=1.5$]{\includegraphics[width=0.35\textwidth]{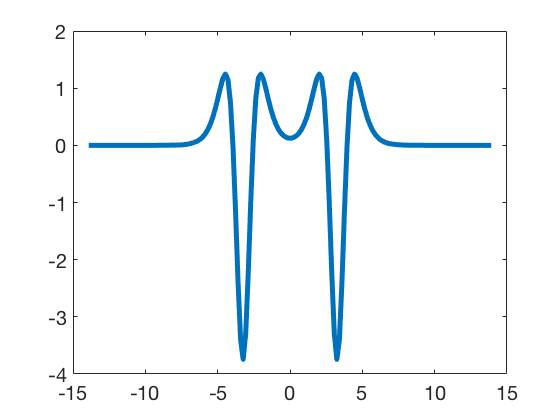}}
\subfigure[$u_h$ at $t=3.24$]{\includegraphics[width=0.35\textwidth]{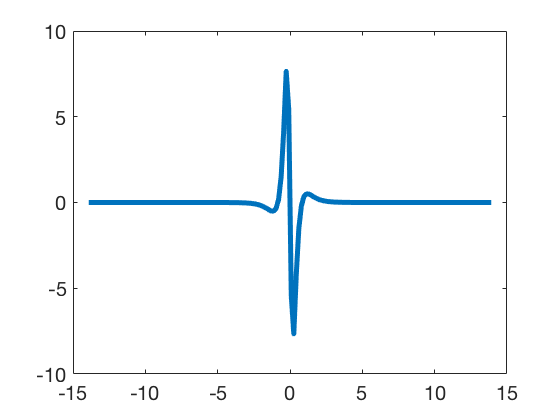}}
\subfigure[$\eta_h$ at $t=3.24$]{\includegraphics[width=0.35\textwidth]{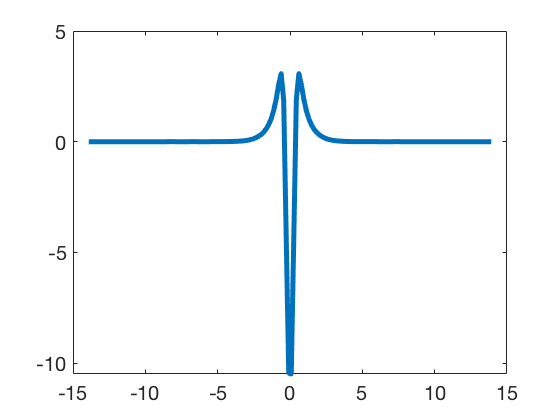}}
\subfigure[$u_h$ at $t=4.4$]{\includegraphics[width=0.35\textwidth]{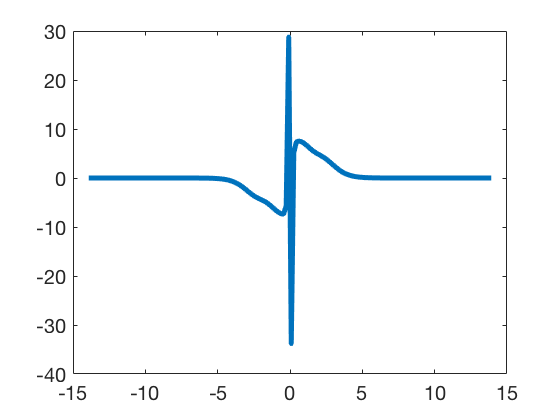}}
\subfigure[$\eta_h$ at $t=4.4$]{\includegraphics[width=0.35\textwidth]{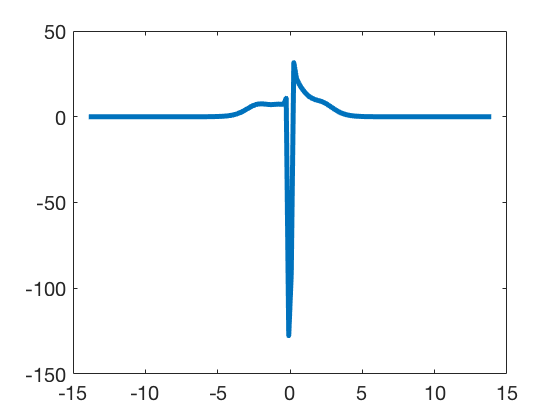}}
\caption{The numerical solution $u_h$ and $\eta_h$ of the finite time blow-up test at different times.}
\label{fig1}
\end{figure}

 \subsubsection{Head-on collision}\label{headon}

 We consider the system equations with the choice of parameters $a=-\frac{7}{30},b=\frac{7}{15}, c=-\frac{2}{5}, d=\frac{1}{2}$,
and the initial conditions  given in \cite{CB}:
 \begin{equation*}
     \eta(x,0)=\eta_+(x)+\eta_-(x), \qquad
     u(x,0)=u_+(x)+u_-(x),
 \end{equation*}
where 
 \begin{equation*}
   \begin{cases}
		\eta_{\pm}(x)=\frac{1}{\sqrt{8}}\sech^2\Big({\sqrt{\frac{5}{28}}}(x-x_{\pm})\Big),&\\
     \\
 	u_{\pm}(x)=\pm\frac{3}{8}\sech^2\Big({\sqrt{\frac{5}{28}}}(x+x_{\pm})\Big).
   \end{cases}
 \end{equation*}
Here the computational domain is set as $I=[-14,14]$, and $x_{\pm}=\pm 7$. Two waves travel towards each other, and collide
around $t= 6$. 

We use the LDG scheme with $k=2$ and $h = 0.175 $ to simulate this example. The time step size is chosen to be $\Delta t\approx0.00214h$.
The numerical solutions $u_h$ and $\eta_h$ at various times (before and after the collision) are shown in Fig. \ref{fig3}.
Numerically, we can observe that these two waves merge, and then split up and continue to travel independently.
 \begin{figure}[H]
\centering
\subfigure[$t=0$ for $u_h$]{\includegraphics[width=0.35\textwidth]{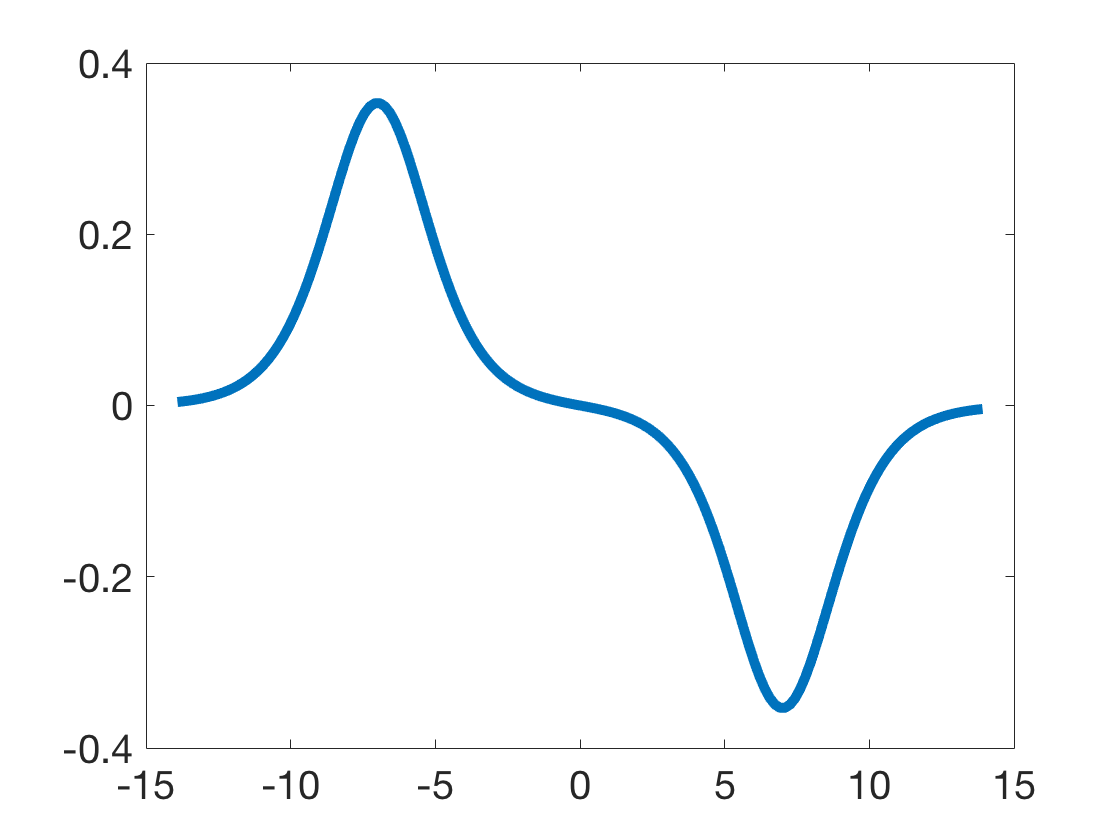}}
\subfigure[$t=0$ for $\eta_h$]{\includegraphics[width=0.35\textwidth]{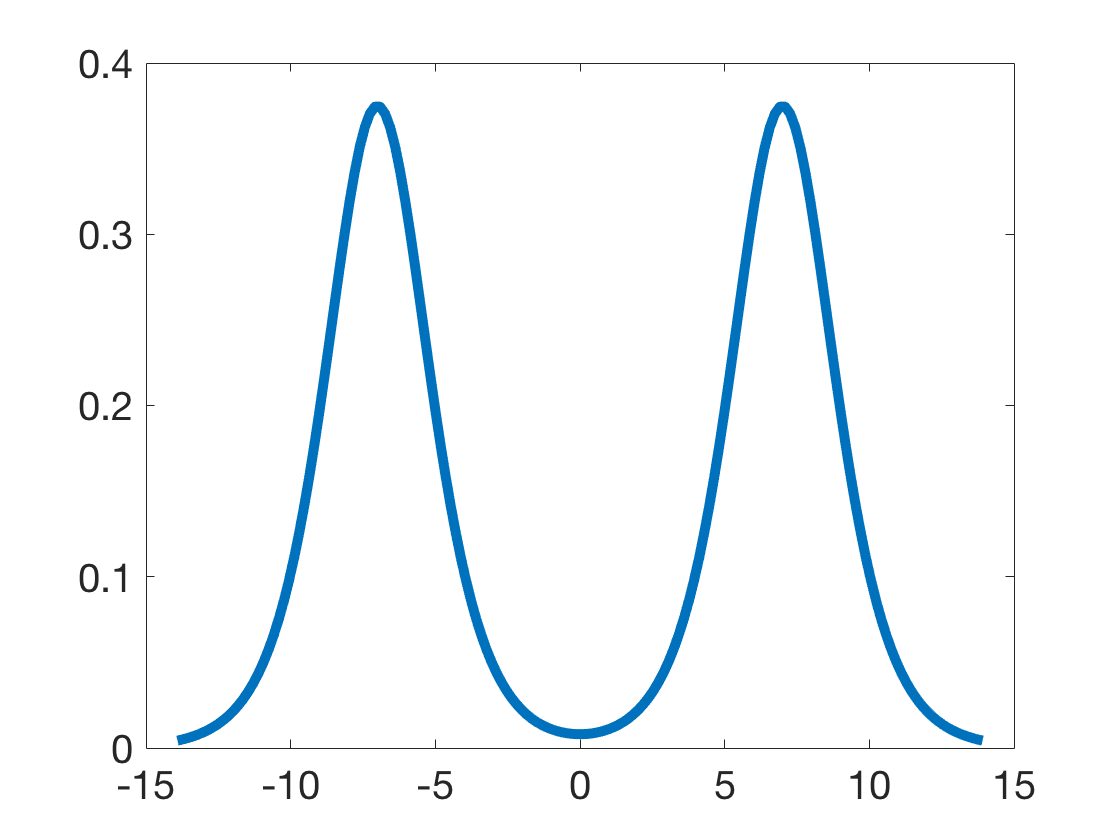}}
\subfigure[$t=3$ for $u_h$]{\includegraphics[width=0.35\textwidth]{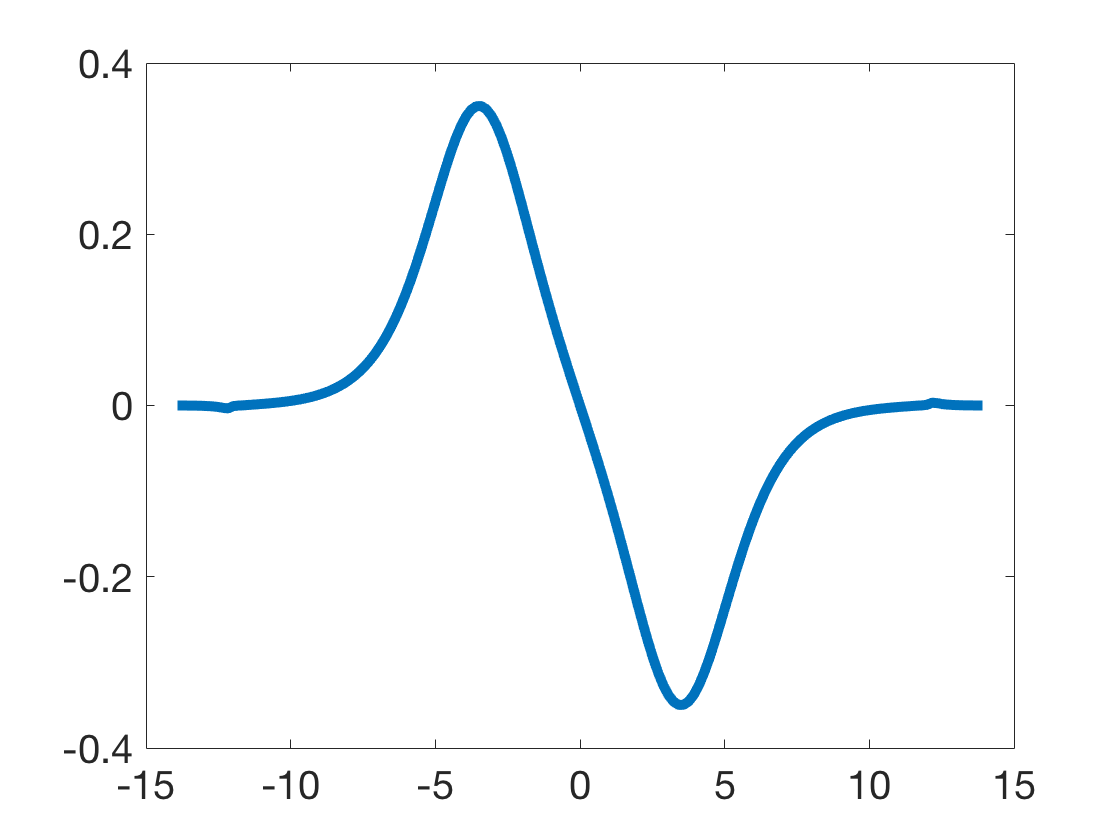}}
\subfigure[$t=3$ for $\eta_h$]{\includegraphics[width=0.35\textwidth]{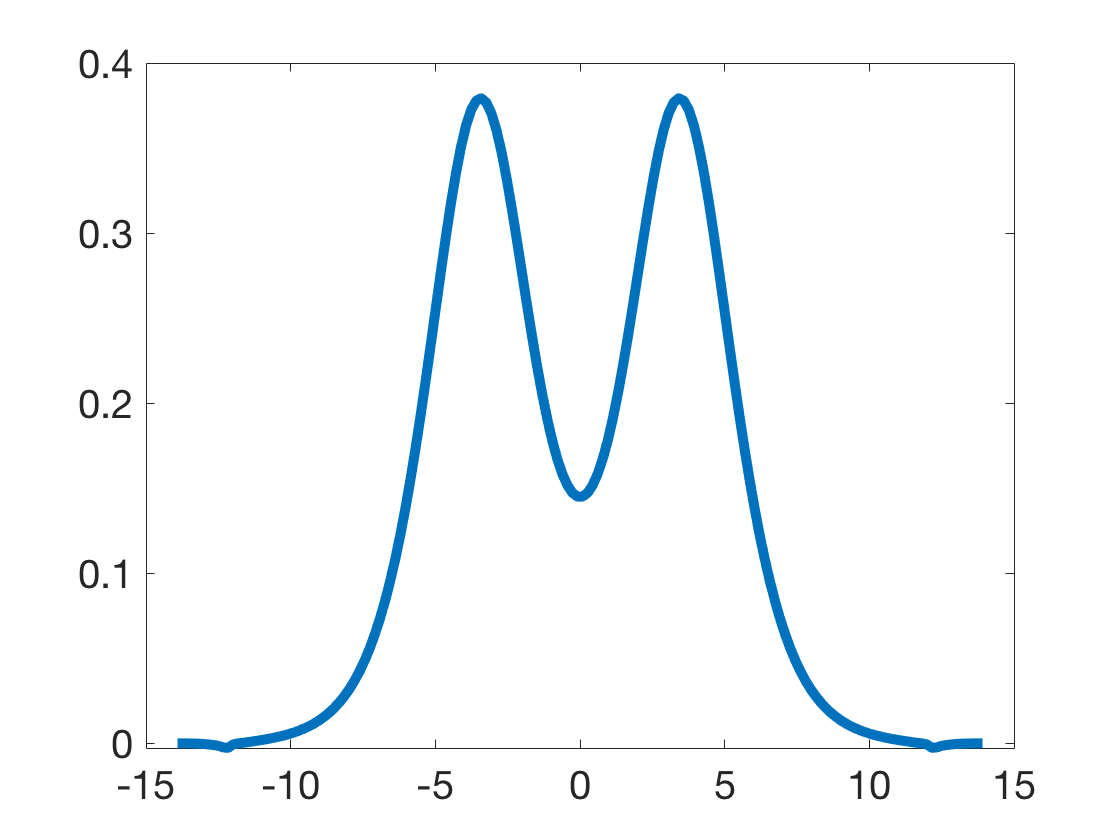}}
\end{figure}
\begin{figure}[H]
\centering
\subfigure[$t=6$ for $u_h$]{\includegraphics[width=0.35\textwidth]{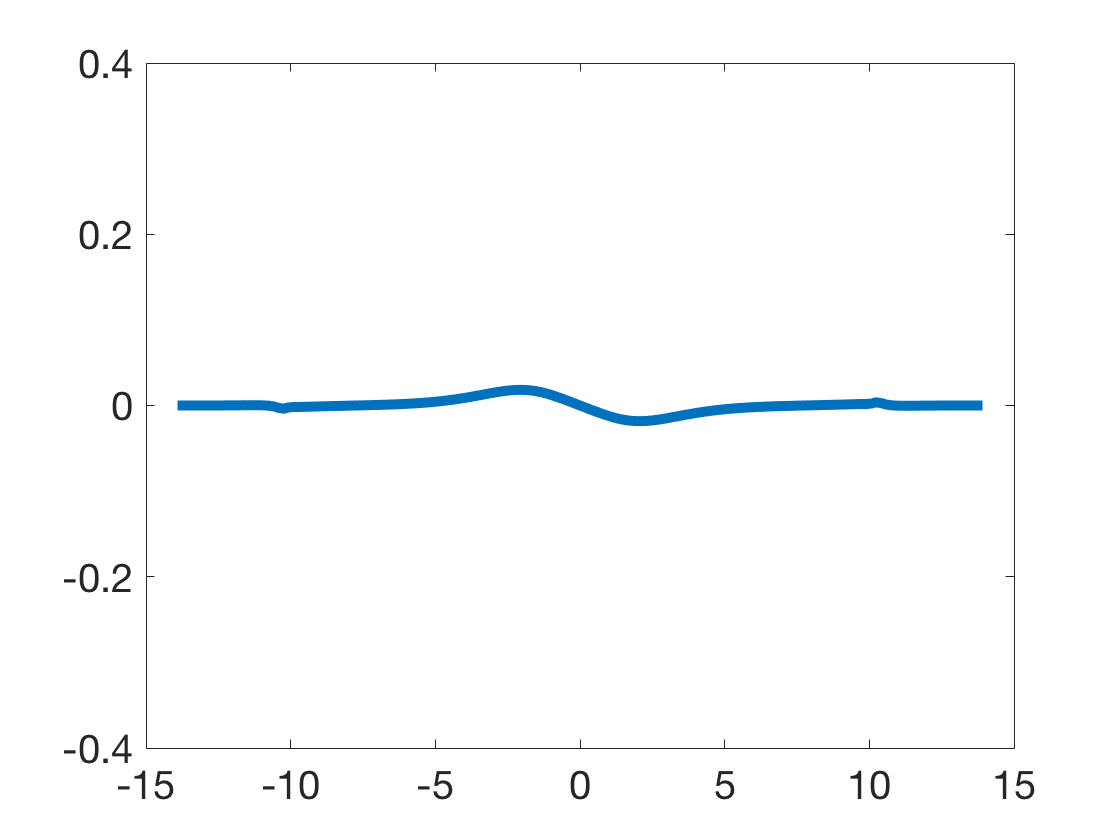}}
\subfigure[$t=6$ for $\eta_h$]{\includegraphics[width=0.35\textwidth]{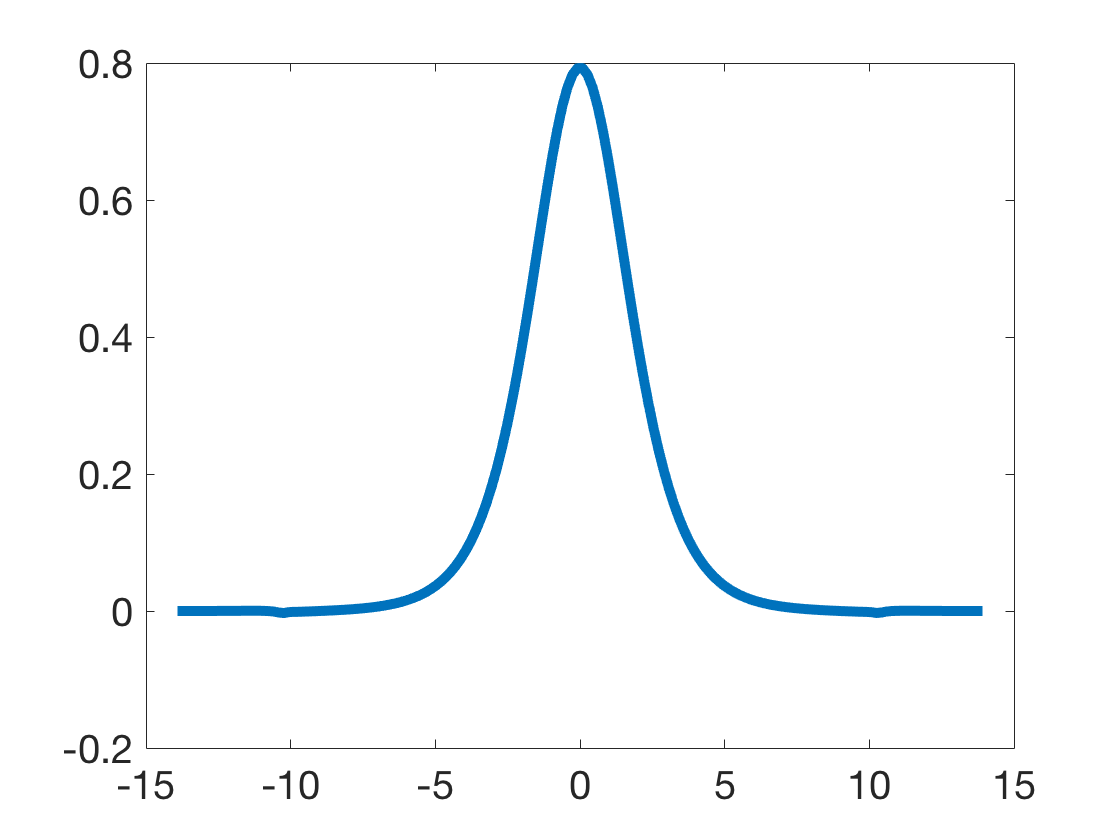}}
\subfigure[$t=9$ for $u_h$]{\includegraphics[width=0.35\textwidth]{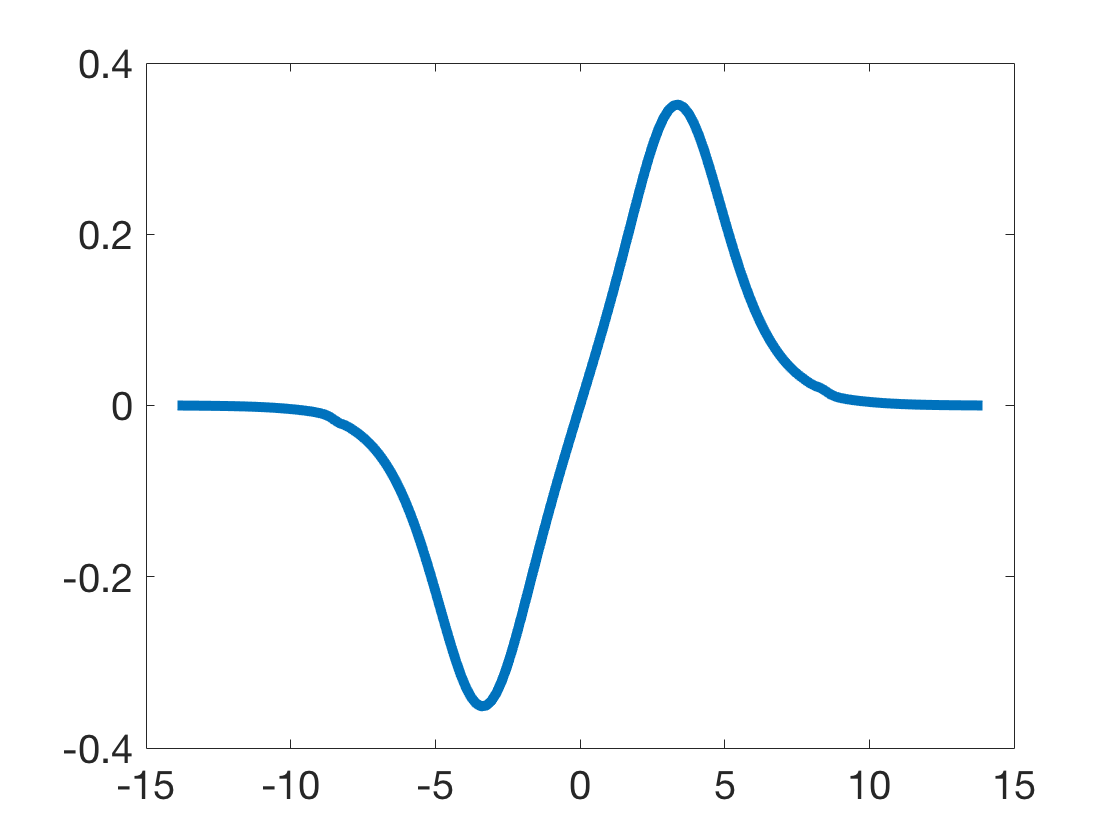}}
\subfigure[$t=9$ for $\eta_h$]{\includegraphics[width=0.35\textwidth]{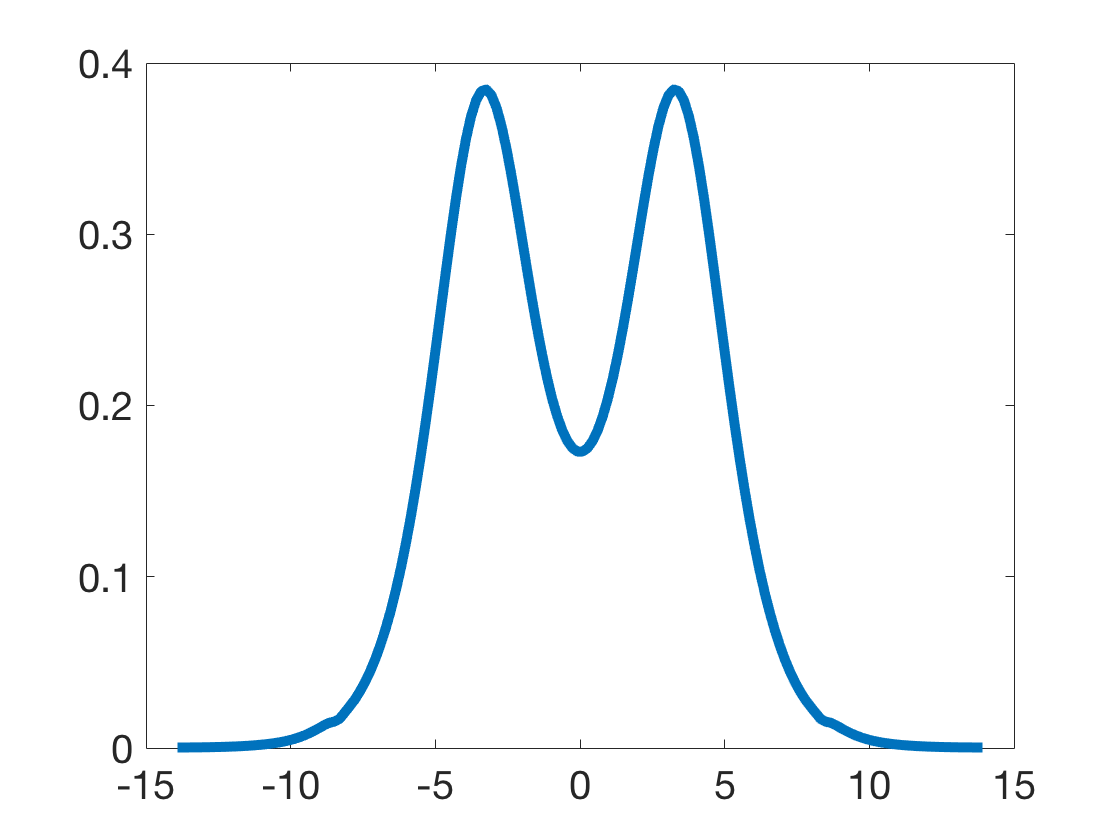}}
\subfigure[$t=12$ for $u_h$]{\includegraphics[width=0.35\textwidth]{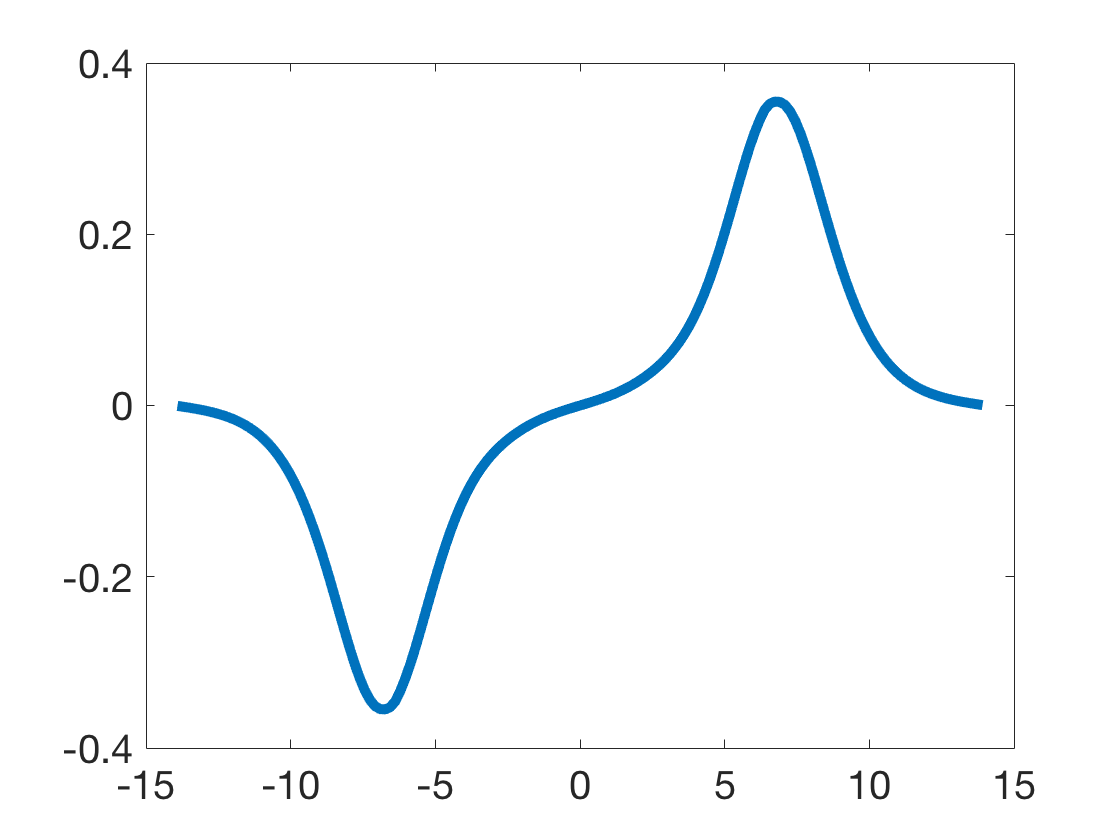}}
\subfigure[$t=12$ for $\eta_h$]{\includegraphics[width=0.35\textwidth]{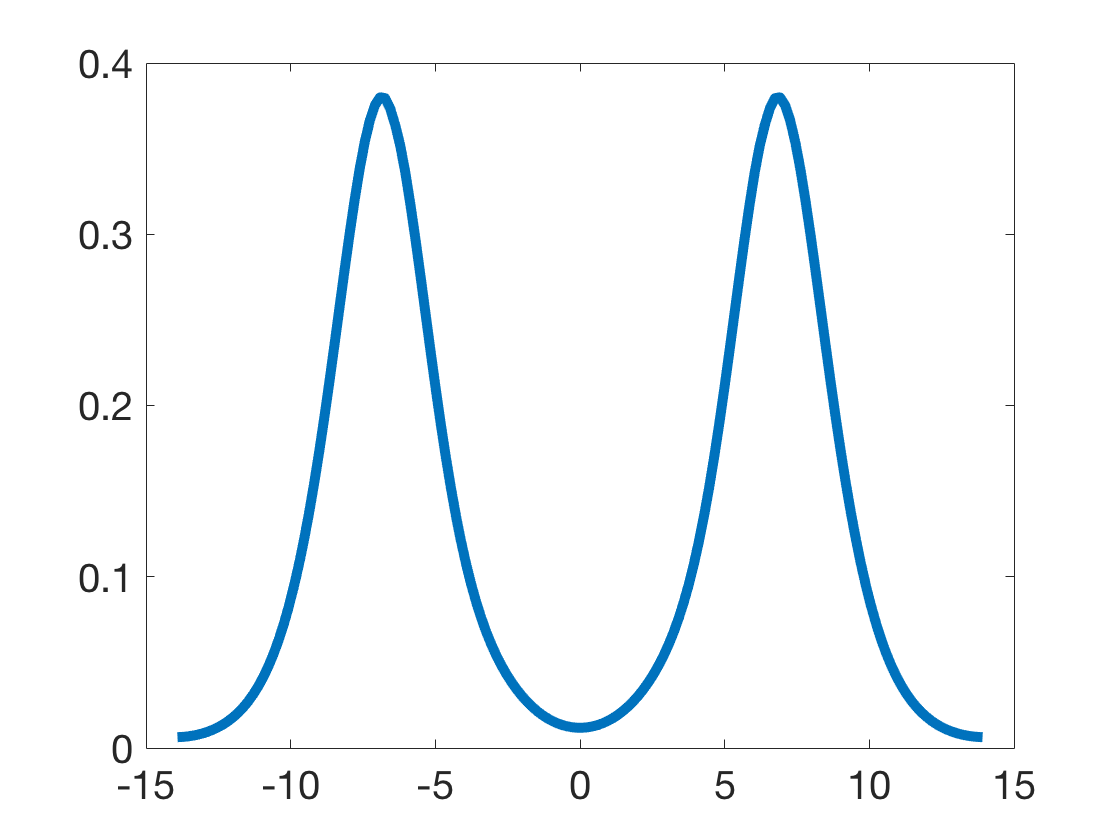}}
\caption{The numerical solution $u_h$ and $\eta_h$ of the head-on collision test at different times. }
\label{fig3}
 \end{figure}

 \section{Conclusion remark}\label{conclusion} \setcounter{equation}{0} \setcounter{table}{0} \setcounter{figure}{0}
In this paper we developed and analyzed the LDG methods for abcd Boussinesq system. 
By utilizing the connection between the error of the auxiliary and primary variables, we proved 
the optimal error estimate for our LDG scheme applied to the BBM-BBM system, and 
sub-optimal error estimate for a wide range of parameters $a,b,c,d>0$. Numerical experiments are provided 
to test the accuracy of our methods, and optimal error estimates are observed for all the cases, including KdV-KdV system 
which is not covered in our theoretical analysis. We also simulate wave collisions of traveling wave solutions 
and observe finite time blow-up behavior of the numerical solutions. 

 \section*{Acknowledgments}
The work of J. Sun and Y. Xing is partially sponsored by NSF grant DMS-1753581.
 
\section*{Conflict of Interest}

The authors declare that they have no conflict of interest.
 

\end{document}